%
%
\documentclass[12pt]{article}

\usepackage{xcolor}
\usepackage{graphicx}
\usepackage{pictex}
\usepackage{mathrsfs}
\usepackage{amssymb}

\title{A closure for Hamilton-connectedness\\ in $\{\claw,\Gamma_3\}$-free 
       graphs$^{1}$}

\author{Adam Kabela \and Zden\v{e}k Ryj\'a\v{c}ek \and 
        M\'aria Skyvov\'a \and Petr Vr\'ana}
      
\date{July 9, 2024}

\pagestyle{plain}
\textwidth 16.8cm
\textheight 23.2cm
\hoffset=-1.2cm
\voffset=-2.5cm

\newcounter{mathitem}
\newenvironment{mathitem}
  {\begin{list}{{$(\roman{mathitem})$}}{
   \setcounter{mathitem}{0}
   \usecounter{mathitem}
   \setlength{\topsep}{0pt plus 2pt minus 0pt}
   \setlength{\parskip}{0pt plus 2pt minus 0pt}
   \setlength{\partopsep}{0pt plus 2pt minus 0pt}
   \setlength{\parsep}{0pt plus 2pt minus 0pt}
   \setlength{\leftmargin}{35pt}
   \setlength{\itemsep}{0pt plus 2pt minus 0pt}}}
  {\end{list}}

\newcounter{mylist}
\newenvironment{mylist}
  {\begin{list}{{$\roman{mylist}$}}{
   \setlength{\topsep}{7pt plus 2pt minus 0pt}
   \setlength{\parsep}{3pt plus 2pt minus 0pt}
   \setlength{\leftmargin}{12pt}
   \setlength{\labelwidth}{-6pt}
   }
   }
  {\end{list}}

%
\newcounter{prostredi}
\def\theprostredi{\arabic{prostredi}}
%
%
%
\def\vejde#1{\unskip
\nobreak\hfill\penalty50\hskip1em\hbox{}\nobreak\hfill
\hbox{#1}}
%
%
\newenvironment{theorem}{\par\bigskip\noindent%
\refstepcounter{prostredi}{\bf Theorem \theprostredi.}\quad\bgroup\sl }
{\egroup\par\bigskip\endtrivlist}%
%
%
\newenvironment{proposition}{\par\bigskip\noindent%
\refstepcounter{prostredi}{\bf Proposition \theprostredi.}\quad\bgroup\sl }
{\egroup\par\bigskip\endtrivlist}%
%
%
\newenvironment{lemma}{\par\bigskip\noindent%
\refstepcounter{prostredi}{\bf Lemma \theprostredi.}\quad\bgroup\sl }
{\egroup\par\bigskip\endtrivlist}%
%
%
\newenvironment{proof}{\par
\noindent%
{\bf Proof.}\quad\bgroup}
{\egroup\vejde{\rule{2.5mm}{2.5mm}}\par\bigskip\endtrivlist}%
%
%
\newenvironment{corollary}{\par\bigskip\noindent%
\refstepcounter{prostredi}{\bf Corollary
\theprostredi.}\quad\bgroup\sl }
{\egroup\par\bigskip\endtrivlist}%

%
\newcounter{prostrclaim}
\def\theprostrclaim{\arabic{prostrclaim}}
%
%
%
\def\vejde#1{\unskip
\nobreak\hfill\penalty50\hskip1em\hbox{}\nobreak\hfill
\hbox{#1}}
%
%
\newenvironment{claim}{\par\bigskip\noindent%
\refstepcounter{prostrclaim}{\underline{Claim \theprostrclaim.}}\quad\bgroup\sl }
{\egroup\par\bigskip\endtrivlist}%
%
%
%
\newenvironment{proofcl}{\par
\noindent%
{\underline{Proof.}}\quad\bgroup}
{\egroup\vejde{$\Box$}\par\bigskip\endtrivlist}%

%
\newcounter{prostralph}
\def\theprostralph{\Alph{prostralph}}
%
%
%
\def\vejde#1{\unskip
\nobreak\hfill\penalty50\hskip1em\hbox{}\nobreak\hfill
\hbox{#1}}
%
%
\newenvironment{theoremAcite}[1]{\par\bigskip\noindent%
\refstepcounter{prostralph}{\bf Theorem
\theprostralph{} {#1}.}\quad\bgroup\sl }
{\egroup\par\bigskip\endtrivlist}%
%
%
\newenvironment{propositionAcite}[1]{\par\bigskip\noindent%
\refstepcounter{prostralph}{\bf Proposition
\theprostralph{} {#1}.}\quad\bgroup\sl }
{\egroup\par\bigskip\endtrivlist}%
%
%
\newenvironment{lemmaAcite}[1]{\par\bigskip\noindent%
\refstepcounter{prostralph}{\bf Lemma
\theprostralph{} {#1}.}\quad\bgroup\sl }
{\egroup\par\bigskip\endtrivlist}%

\newcommand{\car}{,\penalty0\relax} 
\newcommand{\indsub}{\stackrel{\mbox{\tiny IND}}{\subset}}
\newcommand{\noi}{\noindent}
\newcommand{\ld}{\ldots}
\newcommand{\iso}{\simeq}
\newcommand{\niso}{\not \simeq}
\newcommand{\sm}{\setminus}
\newcommand{\cl}{{\rm cl}}
\newcommand{\claw}{K_{1,3}}
\newcommand{\Gt}{\Gamma_3}
\newcommand{\Gi}{\Gamma_i}
\newcommand{\la}{\langle}
\newcommand{\lab}{\langle \{}
\newcommand{\ra}{\rangle}
\newcommand{\rag}{\rangle _G}
\newcommand{\ragb}{\rangle _{\bar{G}}}
\newcommand{\rab}{\} \rangle}
\newcommand{\rabg}{\} \rangle _G}
\newcommand{\rabgb}{\} \rangle _{\bar{G}}}
\newcommand{\cF}{{\cal F}}
\newcommand{\cG}{{\cal G}}
\newcommand{\cW}{{\cal W}}
\newcommand{\bG}{\bar{G}}
\newcommand{\Gst}{G^{^*}}
\newcommand{\Gstx}{G^{^*}_x}
\newcommand{\bGst}{\bG^{^*}}
\newcommand{\bGstx}{\bG^{^*}_x}
\newcommand{\NbG}{N_{\bG}}
\newcommand{\VLDC}{V_{\rm{L2C}}}
\newcommand{\bp}{\beginpicture}
\newcommand{\ep}{\endpicture}
\newcommand{\dist}{\mbox{dist}}
\newcommand{\Lp}{L^{-1}}
\newcommand{\bs}{\bigskip}
\newcommand{\bsm}{\vspace{-4mm}}
\newcommand{\ms}{\medskip}
\newcommand{\ssk}{\smallskip}

\begin{document}
\maketitle

\footnotetext[1]{All authors are affiliated with the Department of Mathematics; 
European Centre of Excellence NTIS - New Technologies for the Information 
Society, University of West Bohemia,  Univerzitn\'{\i}~8, 301 00 Pilsen, 
Czech Republic.
E-mails: {\tt $\{$kabela,ryjacek,vranap$\}$@kma.zcu.cz, mskyvova@ntis.zcu.cz.} 
The research was supported by project GA20-09525S of the Czech Science 
Foundation.}

\begin{abstract}
\noi
We introduce a closure technique for Hamilton-connectedness 
of $\{\claw,\Gamma_3\}$-free graphs, where $\Gamma_3$ is the graph obtained 
by joining two vertex-disjoint triangles with a path of length~$3$.
The closure turns a claw-free graph into a line graph of a multigraph while 
preserving its (non)-Hamilton-connectedness. The most technical parts of the 
proof are computer-assisted.

The main application of the closure is given in a subsequent paper
showing that every $3$-connected $\{\claw,\Gamma_3\}$-free graph 
is Hamilton-connected, thus resolving one of the two last open cases in the 
characterization of pairs of connected forbidden subgraphs implying 
Hamilton-connectedness.

\ms

\noi
Keywords: Hamilton-connected; closure; forbidden subgraph; claw-free;
$\Gamma_3$-free
\end{abstract}

\section{Terminology and notation}
\label{sec-notation}

In this paper, we generally follow the most common graph-theoretical notation 
and terminology, and for notations and concepts not defined here we refer to 
\cite{BM08}.
Specifically, by a {\em graph} we always mean a simple finite undirected graph;
whenever we admit multiple edges, we always speak about a {\em multigraph}.

We write $G_1\subset G_2$ if $G_1$ is a sub(multi)graph of $G_2$, 
$G_1\indsub G_2$ if $G_1$ is an induced sub(multi)graph of $G_2$, 
$G_1\iso G_2$ if the (multi)graphs $G_1$, $G_2$ are isomorphic, 
and $\la M \rag$ to denote the {\em induced sub(multi)graph} on a set 
$M\subset V(G)$.
We use $d_G(x)$ to denote the {\em degree} of a vertex  $x$ in $G$ (note that 
if $G$ is a multigraph, then $d_G(x)$ equals the sum of multiplicities of the
edges containing $x$),
$N_G(x)$ denotes the {\em neighborhood} of a vertex $x$, and $N_G[x]$ the 
{\em closed neighborhood} of $x$, i.e., $N_G[x]=N_G(x)\cup\{x\}$.
For $M\subset V(G)$, we denote $N_M(x)=N_G(x)\cap M$ and 
$N_G[M]=\cup_{x\in M} N_G[x]$.
For $x,y\in V(G)$, $\dist_G(x,y)$ denotes their {\em distance}, i.e., the 
length of a shortest $(x,y)$-path in $G$. More generally, if $F\subset G$ 
is connected and $x,y\in V(F)$, then $\dist_F(x,y)$ denotes 
the length of a shortest $(x,y)$-path in $F$.
By a {\em clique} in $G$ we mean a complete subgraph of $G$ (not necessarily 
maximal), and $\alpha(G)$ denotes the {\em independence number} of $G$.

We say that a vertex $x\in V(G)$ is {\em simplicial} if $\la N_G(x)\rag$ is a 
clique, and we use $V_{SI}(G)$ to denote the set of all simplicial 
vertices of $G$, and $V_{NS}(G)=V(G)\setminus V_{SI}(G)$ the set of 
nonsimplicial vertices of $G$.
For $k\geq 1$, we say that a vertex $x\in V(G)$ is {\em locally $k$-connected 
in $G$} if $\la N_G(x)\rag$ is a $k$-connected graph.

A graph is {\em Hamilton-connected} if, for any $u,v\in V(G)$, $G$ has a 
hamiltonian $(u,v)$-path, i.e., an $(u,v)$-path $P$ with $V(P)=V(G)$.

Finally, if $\cF$ is a family of graphs, we say that $G$ is {\em $\cF$-free}
if $G$ does not contain an induced subgraph isomorphic to a member of $\cF$,
and the graphs in $\cF$ are referred to in this context as {\em forbidden
(induced) subgraphs}.
If $\cF=\{F\}$, we simply say that $G$ is {\em $F$-free}.
Here, the {\em claw} is the graph $\claw$, $P_i$ denotes the path on $i$ 
vertices, and $\Gamma_i$ denotes the graph obtained by joining two triangles 
with a path of length $i$ (see Fig.~\ref{fig-special_graphs}$(d)$).
Several further graphs that will occur as forbidden subgraphs are 
shown in Fig.~\ref{fig-special_graphs}$(a),(b),(c)$.
Whenever we will list vertices of an induced claw $\claw$, we will always list 
its center as the first vertex of the list, and when listing vertices of an 
induced subgraph $\Gamma_i$, we always list first the vertices of degree 2 
of one of the triangles, then the vertices of the path, and we finish with 
the vertices of degree 2 of the second triangle (i.e., in the labeling of
vertices as in Fig.~\ref{fig-special_graphs}$(d)$, we write 
$\lab t_1,t_2,p_1,\ldots,p_{i+1},t_3,t_4\rabg\iso\Gi$).

\bsm

%
%
\begin{figure}[ht]
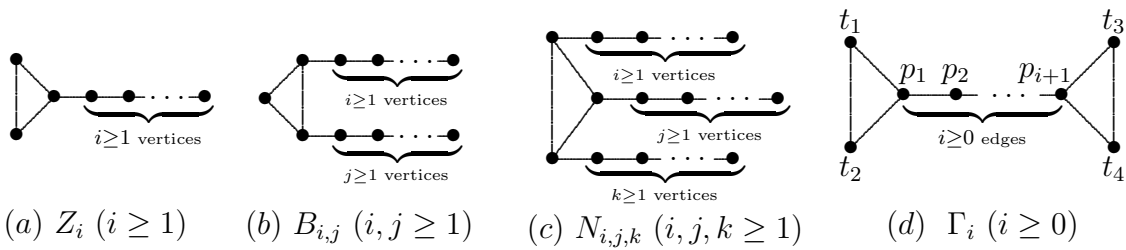

$$\bp
\setcoordinatesystem units <0.75mm,1.0mm>
\setplotarea x from -50 to 50, y from -5 to 5
\put
 {\bp
 \setcoordinatesystem units <1.0mm,1.0mm>
 \setplotarea x from 0 to 25, y from -5 to 5
 \put{$\bullet$} at    0    5
 \put{$\bullet$} at    0   -5
 \put{$\bullet$} at    5    0
 \put{$\bullet$} at   10    0
 \put{$\bullet$} at   15    0
 \put{$\ldots$}  at   20    0
 \put{$\bullet$} at   25    0
 \plot 5 0  0 -5  0 5  5 0  17 0 /
 \plot 23 0  25 0 /
 \put{$\underbrace{\hspace*{1.7cm}}_{i\geq 1\mbox{\tiny\ vertices}}$}
                             at 17.5 -4
 \put{$(a) \ Z_i$ $(i\geq 1$)} at  10.5 -17
 \ep}
at  -85   -1
\put
 {\bp
 \setcoordinatesystem units <1.0mm,1.0mm>
 \setplotarea x from -5 to 25, y from -5 to 5
 \put{$\bullet$} at    0    0
 \put{$\bullet$} at    5    5
 \put{$\bullet$} at   10    5
 \put{$\bullet$} at   15    5
 \put{$\ldots$}  at   20    5
 \put{$\bullet$} at   25    5
 \put{$\bullet$} at    5   -5
 \put{$\bullet$} at   10   -5
 \put{$\bullet$} at   15   -5
 \put{$\ldots$}  at   20   -5
 \put{$\bullet$} at   25   -5
 \plot 17 5 5 5 0 0 5 -5 17 -5 /
 \plot 23  5  25  5 /
 \plot 23 -5  25 -5 /
 \plot 5 -5  5 5 /
 \put{$\underbrace{\hspace*{1.7cm}}_{^{i\geq 1\mbox{\tiny\ vertices}}}$}
                             at 17.5  1.0
 \put{$\underbrace{\hspace*{1.7cm}}_{^{j\geq 1\mbox{\tiny\ vertices}}}$}
                             at 17.5  -9.0
 \put{$(b) \ B_{i,j}$ ($i,j\geq 1$)} at  12.5 -17
 \ep}
at   -42   -1.2
\put
 {\bp
 \setcoordinatesystem units <1.2mm,0.8mm>
 \setplotarea x from -25 to 25, y from -5 to 5
 \put{$\bullet$} at    0    5
 \put{$\bullet$} at    5    5
 \put{$\bullet$} at   10    5
 \put{$\ldots$}  at   15    5
 \put{$\bullet$} at   20    5
 \put{$\bullet$} at    5   -5
 \put{$\bullet$} at   10   -5
 \put{$\bullet$} at   15   -5
 \put{$\ldots$}  at   20   -5
 \put{$\bullet$} at   25   -5
 \put{$\bullet$} at    0  -15
 \put{$\bullet$} at    5  -15
 \put{$\bullet$} at   10  -15
 \put{$\ldots$}  at   15  -15
 \put{$\bullet$} at   20  -15
 \plot 0 5 5 -5 0 -15 0 5 /
 \plot 0  5  12 5 /
 \plot  18 5  20 5 /
 \plot 5 -5  17 -5 /
 \plot  25 -5  23 -5 /
 \plot 0  -15  12 -15 /
 \plot  18 -15  20 -15 /
 \put{$\underbrace{\hspace*{2.1cm}}_{^{i\geq 1\mbox{\tiny\ vertices}}}$}
                             at 12.5  0.4
 \put{$\underbrace{\hspace*{2.1cm}}_{^{j\geq 1\mbox{\tiny\ vertices}}}$}
                             at 17.5  -9.6
 \put{$\underbrace{\hspace*{2.1cm}}_{^{k\geq 1\mbox{\tiny\ vertices}}}$}
                             at 12.5  -19.6
 \put{$(c) \ N_{i,j,k}$ ($i,j,k\geq 1$)} at  12.5 -27
 \ep}
at   -4  0
\put
 {\bp
 \setcoordinatesystem units <1.4mm,1.4mm>
 \setplotarea x from 0 to 25, y from -5 to 5
 \put{$\bullet$} at    0   5
 \put{$\bullet$} at    0  -5
 \put{$\bullet$} at    5   0
 \put{$\bullet$} at   10   0
 \put{$\ldots$}  at   15   0
 \put{$\bullet$} at   20   0
 \put{$\bullet$} at   25   5
 \put{$\bullet$} at   25  -5
 \plot 5 0  0 -5  0 5  5 0  12 0 /
 \plot 18 0  20 0  25 5  25 -5  20 0 /
 \put{$t_1$}   at   0   7
 \put{$t_2$}   at   0  -7
 \put{$p_1$}   at   6  2
 \put{$p_2$}   at  10  2
 \put{$p_{i+1}$}  at  18.5  2
 \put{$t_3$}   at  25   7
 \put{$t_4$}   at  25  -7
  \put{$\underbrace{\hspace*{2.1cm}}_{i\geq 0\mbox{\tiny\ edges}}$}
                             at 12.5 -3.1
 \put{$(d) \ \ \Gamma_i$ $(i\geq 0)$} at  12.5 -12.5
 \ep}
at  70 1.8
\ep$$
\vspace*{-5mm}
\caption{The graphs $Z_i$, $B_{i,j}$, $N_{i,j,k}$ and $\Gi$}
\label{fig-special_graphs}
\end{figure}

\section{Introduction}
\label{sec-intro}

There are many results on forbidden induced subgraphs implying various 
Hamilton-type properties. While forbidden pairs of connected graphs 
for hamiltonicity in 2-connected graphs were completely characterized already 
in the early 90's \cite{Be91,FG97}, the progress in forbidden pairs for 
Hamilton-connectedness is relatively slow. 

\ms

Let $W$ denote the Wagner graph and $W^+$ the graph obtained from $W$ by
attaching exactly one pendant edge to each of its vertices 
(see Fig.~\ref{fig-Wagner}).

%
%
\begin{figure}[ht]
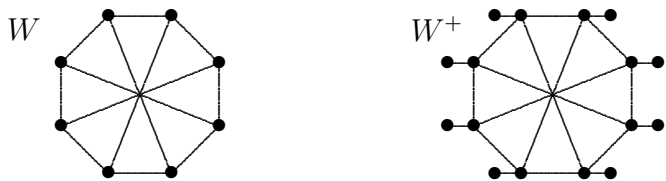

$$\beginpicture
\setcoordinatesystem units <0.7mm,1mm>
\setplotarea x from -70 to 70, y from -5 to 5
\put {\bp
\setcoordinatesystem units <0.35mm,0.35mm>
  \put{$\bullet$} at   -30   12
  \put{$\bullet$} at   -30  -12
  \put{$\bullet$} at    30   12
  \put{$\bullet$} at    30  -12
  \put{$\bullet$} at   -12   30
  \put{$\bullet$} at    12   30
  \put{$\bullet$} at   -12  -30
  \put{$\bullet$} at    12  -30
  \plot -30 -12 -30 12 -12 30 12 30 30 12 30 -12 12 -30
        -12 -30 -30 -12 /
  \plot -30  12  30 -12 /
  \plot -30 -12  30  12 /
  \plot -12  30  12 -30 /
  \plot -12 -30  12  30 /
 \put{$W$}   at  -44    25
\ep} at -40 0
\put {\bp
\setcoordinatesystem units <0.35mm,0.35mm>
  \put{$\bullet$} at   -30   12
  \put{$\bullet$} at   -30  -12
  \put{$\bullet$} at    30   12
  \put{$\bullet$} at    30  -12
  \put{$\bullet$} at   -12   30
  \put{$\bullet$} at    12   30
  \put{$\bullet$} at   -12  -30
  \put{$\bullet$} at    12  -30
  \plot -30 -12 -30 12 -12 30 12 30 30 12 30 -12 12 -30
        -12 -30 -30 -12 /
  \plot -30  12  30 -12 /
  \plot -30 -12  30  12 /
  \plot -12  30  12 -30 /
  \plot -12 -30  12  30 /
  \put{$\bullet$} at   -40   12
  \put{$\bullet$} at   -40  -12
  \put{$\bullet$} at    40   12
  \put{$\bullet$} at    40  -12
  \put{$\bullet$} at   -22   30
  \put{$\bullet$} at    22   30
  \put{$\bullet$} at   -22  -30
  \put{$\bullet$} at    22  -30
  \plot -40  12 -30  12 /
  \plot -40 -12 -30 -12 /
  \plot  40  12  30  12 /
  \plot  40 -12  30 -12 /
  \plot -22  30 -12  30 /
  \plot -22 -30 -12 -30 /
  \plot  22  30  12  30 /
  \plot  22 -30  12 -30 /
 \put{$W^+$}   at  -44    25
\ep} at 40 0
\endpicture$$
\vspace*{-6mm}
\caption{The Wagner graph $W$ and the graph $W^+$}
\label{fig-Wagner}
\vspace*{-2mm}
\end{figure}

Theorem~\ref{thmA-known_results} below lists the best known results on pairs of 
forbidden subgraphs implying Hamilton-connectedness of a 3-connected graph.

%
%
\begin{theoremAcite}{\cite{BGHJFW14,BFHTV02,LRVXY23-I,LRVXY23-II,LXL21,RV21,RV23}}
\label{thmA-known_results}
Let $G$ be a 3-connected $\{\claw,X\}$-free graph, where
\begin{mathitem}
\item {\bf ~\cite{BFHTV02}} $X=\Gamma_1$, or
\item {\bf ~\cite{BGHJFW14}} $X=P_9$, or
\item {\bf ~\cite{RV21}} $X=Z_7$ and $G\niso L(W^+)$, or 
\item {\bf ~\cite{RV23}} $X=B_{i,j}$ for $i+j\leq 7$, or
\item {\bf ~\cite{LRVXY23-I,LRVXY23-II,LXL21}} $X=N_{i,j,k}$ for $i+j+k\leq 7$.
\end{mathitem} 
Then $G$ is Hamilton-connected.
\end{theoremAcite}

Let $\cW$ be the family of graphs obtained by attaching at least one pendant edge
to each of the vertices of the Wagner graph $W$, and let 
$\cG=\{L(H)|\ H\in\cW\}$ be the family of their line graphs.
Then any $G\in\cG$ is 3-connected, non-Hamilton-connected, $P_{10}$-free,
$Z_8$-free, $B_{i,j}$-free for $i+j=8$ and $N_{i,j,k}$-free for $i+j+k=8$. 
Thus, this example shows that parts $(ii)$, $(iii)$, $(iv)$ and $(v)$ of 
Theorem~\ref{thmA-known_results} are sharp.

\bs

According to the discussion in Section~6 of \cite{RV23}, there are two
remaining connected graphs $X$ that might imply Hamilton-connectedness 
of a 3-connected $\{\claw,X\}$-free graph, namely, the graph $\Gt$, 
and the graph $\Gamma_5$ for $|V(G)|\geq 21$ (or possibly with a single 
exception of $G\iso L(W^+)$). In this paper, we address the first of these
graphs, the graph $\Gt$. We develop the main tool, the 
closure operation, and in the subsequent paper \cite{KRSV???-II}, as an
application of the main result of this paper, we prove the following.

%
%
\begin{theoremAcite}{\cite{KRSV???-II}}
\label{thmA-main}
Every $3$-connected $\{\claw,\Gamma_3\}$-free graph is Hamilton-connected.
\end{theoremAcite}

In Section~\ref{sec-preliminaries}, we collect necessary known results and 
facts on line graphs and on closure operations, and then, in 
Section~\ref{sec-Gt-closure}, we develop a closure technique that will be
crucial for the proof of Theorem~\ref{thmA-main}.
The most technical parts of the proofs (namely, Case~1 and Subcase~2.2 in the
proof of Proposition~\ref{prop-closure-W4}) are computer-assisted.
More details on the computation can be found in Section~\ref{sec-concluding}, 
and detailed results of the computation and source codes are available in 
the repository at link~\cite{computing}.

\section{Preliminaries}
\label{sec-preliminaries}

In this section, we summarize known facts that will be needed in the 
proof of our main result, Theorem~\ref{thm-Gamma3-clos-lajngraf}.

\subsection{Line graphs of multigraphs and their preimages}
\label{subsec-linegraph-preimage}

The following characterization of line graphs of multigraphs was proved 
by Bermond and Meyer~\cite{BM73} (see also Zverovich~\cite{Z97}).

%
%
\begin{theoremAcite}{\cite{BM73}}
\label{thmA-BeMe}
A graph $G$ is a line graph {\em of a multigraph} if and only if
$G$ does not contain a copy of any of the graphs in Figure~\ref{fig-BeMe} 
as an induced subgraph.
\end{theoremAcite}

\vspace*{-5mm}

%
%
\begin{figure}[ht]
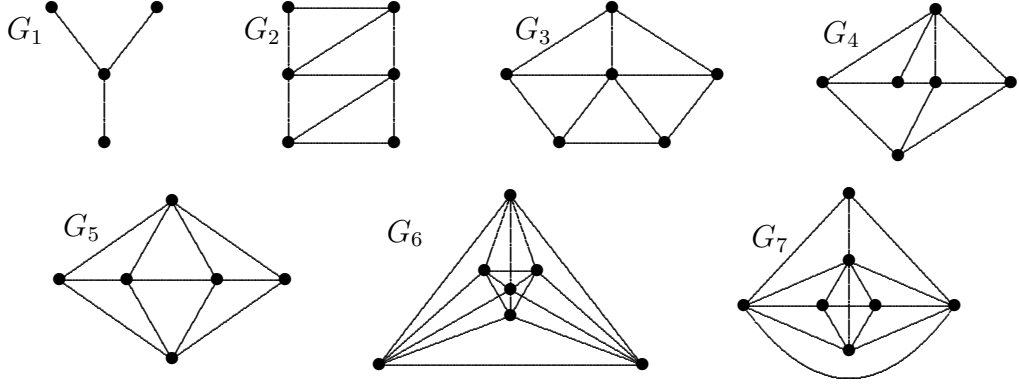

$$\bp
\setcoordinatesystem units <0.9mm,0.75mm>
\setplotarea x from -50 to 50, y from 0 to 45
\put{
\bp
\setcoordinatesystem units <0.7mm,.6mm>
\setplotarea x from -20 to 20, y from -20 to 15
\put{$\bullet$} at -10  15
\put{$\bullet$} at  10  15
\put{$\bullet$} at   0   0
\put{$\bullet$} at   0 -15
\plot -10 15  0 0  10 15 /
\plot 0 0  0 -15 /
\put{$G_1$} at -15 10
\ep} at -60 35
\put{
\bp
\setcoordinatesystem units <0.7mm,.6mm>
\setplotarea x from -20 to 20, y from -20 to 15
\put{$\bullet$} at -10  15
\put{$\bullet$} at  10  15
\put{$\bullet$} at -10   0
\put{$\bullet$} at  10   0
\put{$\bullet$} at -10 -15
\put{$\bullet$} at  10 -15
\plot -10 -15 -10 15 10 15 10 -15 -10 -15 10 0 -10 0 10 15 /
\put{$G_2$} at -15 10
\ep} at -25 35
\put{
\bp
\setcoordinatesystem units <0.7mm,.6mm>
\setplotarea x from -20 to 20, y from -20 to 15
\put{$\bullet$} at   0  15
\put{$\bullet$} at   0   0
\put{$\bullet$} at -20   0
\put{$\bullet$} at  20   0
\put{$\bullet$} at -10 -15
\put{$\bullet$} at  10 -15
\plot 0 15 -20 0 -10 -15 10 -15 20 0 0 15 0 0 -10 -15 /
\plot -20 0 20 0 /
\plot 0 0  10 -15 /
\put{$G_3$} at -15 10
\ep} at 15 35
\put{
\bp
\setcoordinatesystem units <0.5mm,.65mm>
\setplotarea x from -20 to 20, y from -20 to 20
\put{$\bullet$} at -25   0
\put{$\bullet$} at  -5   0
\put{$\bullet$} at   5   0
\put{$\bullet$} at  25   0
\put{$\bullet$} at   5  15
\put{$\bullet$} at  -5 -15
\plot -25 0 25 0  5 15 -25 0  -5 -15 5 0  5 15  -5 0 /
\plot -5 -15  25 0 /
\put{$G_4$} at -20 10
\ep} at  60 35
\put{
\bp
\setcoordinatesystem units <0.6mm,.7mm>
\setplotarea x from -20 to 20, y from -20 to 20
\put{$\bullet$} at   0  15
\put{$\bullet$} at   0 -15
\put{$\bullet$} at -25   0
\put{$\bullet$} at -10   0
\put{$\bullet$} at  10   0
\put{$\bullet$} at  25   0
\plot -25 0  25 0  0 15 -25 0  0 -15 -10 0  0 15  10 0  0 -15 25 0 /
\put{$G_5$} at -20 10
\ep} at -50 0
\put{
\bp
\setcoordinatesystem units <0.7mm,.5mm>
\setplotarea x from -20 to 20, y from -20 to 20
\put{$\bullet$} at   5  20
\put{$\bullet$} at   0   0
\put{$\bullet$} at  10   0
\put{$\bullet$} at   5  -5
\put{$\bullet$} at   5 -12
\put{$\bullet$} at -20 -25
\put{$\bullet$} at  30 -25
\plot 5 20 -20 -25  30 -25  5 20  0 0  -20 -25  5 -12  30 -25  10 0
    5 20  5 -12  0 0  10 0  5 -12 /
\plot -20 -25  5 -5  0 0 /
\plot 30 -25  5 -5  10 0 /
\put{$G_6$} at -15 10
\ep} at   0 0
\put{
\bp
\setcoordinatesystem units <0.7mm,.6mm>
\setplotarea x from -20 to 20, y from -20 to 20
\put{$\bullet$} at   0  20
\put{$\bullet$} at   0   5
\put{$\bullet$} at -20  -5
\put{$\bullet$} at  20  -5
\put{$\bullet$} at  -5  -5
\put{$\bullet$} at   5  -5
\put{$\bullet$} at  0  -15
\plot 0 20 -20 -5  20 -5  0 20  0 -15 -20 -5  0 5  20 -5
   0 -15  -5 -5  0 5  5 -5  0 -15 /
\setquadratic
\plot -20 -5  0 -21  20 -5 /
\setlinear
\put{$G_7$} at -15 10
\ep} at   50 0
\ep$$
\caption{Forbidden subgraphs for line graphs of multigraphs}
\label{fig-BeMe}
\end{figure}

While in line graphs of graphs, for a connected line graph $G$, the graph $H$ 
such that $G=L(H)$ is uniquely determined with a single exception of $G=K_3$, 
in line graphs of multigraphs this is not true: a simple example are the graphs 
$H_1=Z_1$ and $H_2$ a double edge with one pendant edge attached to each vertex
-- while $H_1\not\iso H_2$, we have $L(H_1)\iso L(H_2)$.
Using a modification of an approach from \cite{Z97}, the following was proved 
in \cite{RV11}.

%
%
\begin{theoremAcite}{\cite{RV11}}
\label{thmA-vzor_jedn}
Let $G$ be a connected line graph of a multigraph. Then there is, up
to an isomorphism, a uniquely determined multigraph $H$ such that 
a vertex $e\in V(G)$ is simplicial in $G$ if and only if the corresponding 
edge $e\in E(H)$ is a pendant edge in $H$.
\end{theoremAcite}

The multigraph $H$ with the properties given in Theorem~\ref{thmA-vzor_jedn}
will be called the {\em preimage} of a line graph $G$ and denoted $H=\Lp(G)$.
We will also use the notation $a=L(e)$ and $e=\Lp(a)$ for an edge $e\in E(H)$
and the corresponding vertex $a\in V(G)$.

\ms

An edge-cut $R\subset E(H)$ of a multigraph $H$ is {\em essential} if $H-R$ has 
at least two nontrivial components, and $H$ is 
{\em essentially $k$-edge-connected} if every essential edge-cut of $H$ is 
of size at least $k$. It is obvious that 
a set $M\subset V(G)$ of vertices of a line graph $G$ is a vertex-cut of $G$ 
if and only if the corresponding set $\Lp(M)\subset E(\Lp(G))$ is an essential
edge-cut of $\Lp(G)$. Consequently, a noncomplete line graph $G$ is 
$k$-connected if and only if $\Lp(G)$ is essentially $k$-edge-connected.
It is also a well-known fact that if $X$ is a line graph, then a line graph $G$
is $X$-free if and only if $\Lp(G)$ does not contain as a subgraph (not 
necessarily induced) a graph $F$ such that $L(F)=X$.

\subsection{Closure operations}
\label{subsec-closure}

For $x\in V(G)$, the {\em local completion of $G$ at $x$} is the
graph $G^{^*}_x=(V(G),E(G)\cup\{y_1y_2|\ y_1,y_2\in N_G(x)\})$ (i.e.,
$G^{^*}_x$ is obtained from $G$ by adding all the missing edges with
both vertices in $N_G(x)$). In this context, the edges in $E(\Gstx)\sm E(G)$
will be refereed to as {\em new edges}, and the edges in $E(G)$ are {\em old}.
Obviously, if $G$ is claw-free, then so is $G^{^*}_x$.
Note that in the special case when $G$ is a line graph and $H=\Lp(G)$, 
$G^{^*}_x$ is the line graph of the multigraph $H|_{e}$ obtained from $H$ by 
contracting the edge $e=\Lp(x)$ into a vertex and replacing the created 
loop(s) by pendant edge(s) (Thus, if $G=L(H)$ and $x=L(e)$, then 
$\Gst_x=L(H|_e)$).

Also note that clearly $x\in V_{SI}(G^{^*}_x)$ for any $x\in V(G)$, and, more 
generally, $V_{SI}(G)\subset V_{SI}(G^{^*}_x)$ for any $x\in V(G)$.

\ms

We say that a vertex $x\in V(G)$ is {\em eligible} if $\la N_G(x)\rag$ 
is a connected noncomplete graph, and we use $V_{EL}(G)$ to denote the set of 
all eligible vertices of $G$. Note that in the special case when $G$ is a 
line graph and $H=\Lp(G)$, it is not difficult to observe that $x\in V(G)$ is 
eligible if and only if the edge $\Lp(x)$ is in a triangle or in a multiple edge
of $H$.
Based on the fact that if $G$ is claw-free and $x\in V_{EL}(G)$, then
$G^{^*}_x$ is hamiltonian if and only if $G$ is hamiltonian, the
{\em closure} $\cl(G)$ of a claw-free graph $G$ was defined in \cite{R97} as
the graph obtained from $G$ by recursively performing the local completion
operation at eligible vertices, as long as this is possible (more precisely:
$\cl(G)=G_k$, where $G_1,\ld,G_k$ is a sequence of graphs such that $G_1=G$,
$G_{i+1}=(G_i)^{^*}_{x_i}$ for some $x_i\in V_{EL}(G)$, $i=1,\ld,k-1$,
and $V_{EL}(G_k)=\emptyset$).
The closure $\cl(G)$ of a claw-free graph $G$ is uniquely determined, is a 
line graph of a triangle-free graph, and is hamiltonian if and only if so 
is $G$. However, as observed in \cite{BFR00}, the closure operation
does not preserve the (non-)Hamilton-connectedness of~$G$.

\bs

In attempts to identify reasons of this problem, the following result 
on stability of hamiltonian path under local completion was 
proved in \cite{BFR00}.

%
%
\begin{propositionAcite}{\cite{BFR00}}
\label{propA-subgraph_S}
Let $x$ be an eligible vertex of a claw-free graph $G$, $\Gstx$ the
local completion of $G$ at $x$, and $a$, $b$ two distinct vertices
of $G$. Then for every longest $(a,b)$-path $P'(a,b)$ in $\Gstx$
there is a path $P$ in $G$ such that $V(P)=V(P')$ and $P$ admits at
least one of $a$, $b$ as an endvertex. Moreover, there is an
$(a,b)$-path $P(a,b)$ in $G$ such that $V(P)=V(P')$ except perhaps
in each of the following two situations (up to symmetry between $a$
and $b$):
\begin{mathitem}
\item There is an induced subgraph $H \subset G$ isomorphic to the
     graph $S$ in Figure~\ref{fig-graph_S} such that both $a$ and $x$ are 
     vertices of degree 4 in $H$. In this case $G$ contains a path $P_b$ 
     such that $b$ is an endvertex of $P$ and $V(P_b)=V(P')$. If, moreover,
     $b\in V(H)$, then $G$ contains also a path $P_a$ with endvertex
     $a$ and with $V(P_a)=V(P')$.
\item $x=a$ and $ab\in E(G)$. In this case there is always both a path
     $P_a$ in $G$ with endvertex $a$ and with $V(P_a)=V(P')$ and
     a path $P_b$ in $G$ with endvertex $b$ and with $V(P_b)=V(P')$.
\end{mathitem}
\end{propositionAcite}

\bsm

%
%
\begin{figure}[ht]
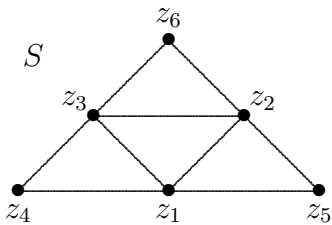

$$\beginpicture
\setcoordinatesystem units <1mm,1mm>
\setplotarea x from -30 to 30, y from 10 to 20
\put {$\bullet$} at    0  10
\put {$z_1$}     at    0   7
\put {$\bullet$} at    0  30
\put {$z_6$}     at    0  33
\put {$\bullet$} at  -20  10
\put {$z_4$}     at  -20   7
\put {$\bullet$} at  -10  20
\put {$z_3$}     at -12.5 22
\put {$\bullet$} at   20  10
\put {$z_5$}     at   20   7
\put {$\bullet$} at   10  20
\put {$z_2$}     at 12.5  22
\setlinear
\plot -20 10  20 10  0 30  -10 20  0 10  10 20  -10 20  -20 10 /
\put {$S$} at  -18  28
\endpicture$$
\vspace{-10mm}
\caption{The graph $S$}
\label{fig-graph_S}
\end{figure}

The following consequence of Proposition~\ref{propA-subgraph_S} will be
useful in our proof.

%
%
\begin{corollary}
\label{coro-subgraph_S}
Let $G$ be a claw-free graph that is not Hamilton-connected, and let
$x_1,x_2\in V_{EL}(G)$ be such that $x_1x_2\notin E(G)$ and both 
$\Gst_{x_1}$ and $\Gst_{x_2}$ are Hamilton-connected.
Then at least one of the vertices $x_1,x_2$ is a vertex of degree 4 
in an induced subgraph $H\indsub G$ isomorphic to the graph $S$ in 
Figure~\ref{fig-graph_S}.
\end{corollary}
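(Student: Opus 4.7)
The plan is to apply Proposition~\ref{propA-subgraph_S} to each of $x_1$ and $x_2$ for a common, carefully chosen pair of endpoints, and then to use the hypothesis $x_1x_2\notin E(G)$ to rule out the non-$S$ exceptional case.

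First, since $G$ is not Hamilton-connected, I fix a pair $a,b\in V(G)$ such that $G$ has no Hamilton $(a,b)$-path. Since $\Gst_{x_1}$ is Hamilton-connected, it contains a Hamilton $(a,b)$-path $P'_1$, which is in particular a longest $(a,b)$-path in $\Gst_{x_1}$. Applying Proposition~\ref{propA-subgraph_S} to $x_1$, $a$, $b$ and $P'_1$: either $G$ itself contains an $(a,b)$-path on the vertex set $V(P'_1)=V(G)$ -- which contradicts the choice of $(a,b)$ -- or we fall into exceptional case $(i)$ or $(ii)$ of the proposition (up to symmetry between $a$ and $b$). In case $(i)$, $x_1$ is, together with one of $a,b$, a degree-$4$ vertex of an induced $H\indsub G$ with $H\iso S$, which is exactly the conclusion of the corollary for $x_1$; in case $(ii)$, we have $x_1\in\{a,b\}$ and $ab\in E(G)$.

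Running the same argument for $x_2$ (using a Hamilton $(a,b)$-path $P'_2$ in $\Gst_{x_2}$, which exists by assumption), the identical trichotomy gives either a contradiction with the choice of $(a,b)$, or case $(i)$ for $x_2$ (which is again the desired conclusion), or case $(ii)$ for $x_2$, i.e.\ $x_2\in\{a,b\}$ and $ab\in E(G)$.

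The only remaining situation to dispose of is that case $(ii)$ holds for both $x_1$ and $x_2$. Then $\{x_1,x_2\}\subseteq\{a,b\}$, and since the hypothesis $x_1x_2\notin E(G)$ forces $x_1\neq x_2$, we get $\{x_1,x_2\}=\{a,b\}$, and therefore $x_1x_2=ab\in E(G)$, contradicting $x_1x_2\notin E(G)$. Hence case $(i)$ must hold for at least one of $x_1,x_2$, providing the required induced subgraph $H\iso S$ in which that vertex has degree $4$. I expect no real technical obstacle here: the whole argument is a short bookkeeping on the exceptional cases of Proposition~\ref{propA-subgraph_S}, with the nonadjacency $x_1x_2\notin E(G)$ playing its sole role in the very last step.
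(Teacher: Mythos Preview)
Your proof is correct and follows essentially the same idea as the paper's: apply Proposition~\ref{propA-subgraph_S} and use the nonadjacency $x_1x_2\notin E(G)$ to exclude exceptional case~$(ii)$. Your execution is, if anything, slightly more economical: you fix a single bad pair $(a,b)$ in advance and apply the proposition once for each $x_i$, whereas the paper extracts separate witnesses $b_1,b_2$ (with $x_ib_i\in E(G)$ and no Hamilton $(x_i,b_i)$-path) and then makes a third application of the proposition to the pair $(x_2,b_2)$ at $x_1$ to reach the contradiction.
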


\begin{proof}
If at least one of $x_1,x_2$ satisfies $(i)$ of 
Proposition~\ref{propA-subgraph_S}, then we are done; thus, let both $x_1$
and $x_2$ satisfy $(ii)$ of Proposition~\ref{propA-subgraph_S}.
Thus, by $(ii)$, there are vertices $b_1,b_2\in V(G)$ such that 
$x_ib_i\in E(G)$ and $G$ has no hamiltonian $(x_i,b_i)$-path, $i=1,2$.
Since $x_1x_2\notin E(G)$, $b_1\neq x_2$ and $b_2\neq x_1$. 
Thus, again by $(ii)$ of Proposition~\ref{propA-subgraph_S}, there is no
hamiltonian $(x_2,b_2)$-path in $\Gst_{x_1}$, implying that $\Gst_{x_1}$
is not Hamilton-connected, a contradiction.
\end{proof}

To handle the problem of unstability of Hamilton-connectedness, the closure 
concept was strengthened in \cite{KRTV12} and \cite{LRVXY23-I} 
such that the closure operation (called SM-closure in \cite{KRTV12}, and
its further strengthening, UM-closure in \cite{LRVXY23-I}), preserves 
the (non)-Hamilton-connectedness. However, these operations are
not applicable to $\{\claw,\Gt\}$-free graphs since a closure of a 
$\{\claw,\Gt\}$-free graph is not necessarily $\Gt$-free.

Before showing a way to overcome this problem, we first recall two 
classical results by Chv\'atal and Erd\H{o}s~\cite{ChE72}
and by Fouquet~\cite{F93} that will be needed in the proof of the 
main result.

%
%
\begin{theoremAcite}{\cite{ChE72}}
\label{thmA-Chvatal-Erdos}
Let $G$ be an $s$-connected graph containing no independent set of 
$s$ vertices. Then $G$ is Hamilton-connected. 
\end{theoremAcite}

%
%
\begin{theoremAcite}{\cite{F93}}
\label{thmA-Fouquet}
Let $G$ be a connected claw-free graph with independence number at least 
three. Then every vertex $v$ satisfies exactly one of the following:
\begin{mathitem}
\item $N(v)$ is covered by two cliques,
\item $\la N(v)\rag$ contains an induced $C_5$.
\end{mathitem}
\end{theoremAcite}

\section{$\Gt$-closure}
\label{sec-Gt-closure}

As already mentioned, the SM-closure and UM-closure operations 
(see \cite{KRTV12} and \cite{LRVXY23-I}) preserve 
Hamilton-connectedness, but there is still a problem that the local 
completion $G^{^*}_{x}$ of a $\{\claw,\Gt\}$-free graph $G$ is not 
necessarily $\Gt$-free. 
To handle this problem, we define the concept of a $\Gt$-closure $G^{\Gt}$ 
of a $\{\claw,\Gt\}$-free graph $G$.
For a set $M=\{x_1,x_2,\ldots,x_k\}\subset V(G)$, we set 
$G^{^*}_M=((G^{^*}_{x_1})^{^*}_{x_2}\ldots )^{^*}_{x_k}$. It is 
implicit in the proof of uniqueness of $\cl(G)$ in \cite{R97} (and easy
to see) that, for a given set $M=\{x_1,x_2,\ldots,x_k\}\subset V(G)$,
$G^{^*}_M$ is uniquely determined (i.e., does not depend on the order of the 
vertices $x_1,x_2,\ldots,x_k$ used during the construction).

\ms

If $G$ is not Hamilton-connected, then a vertex $x\in V_{NS}(G)$, for 
which the graph $\Gstx$ is still not Hamilton-connected, is said to be 
{\em feasible in $G$}. A set of vertices $M\subset V(G)$ is said to be 
{\em feasible in $G$} if the vertices in $M$ can be ordered in a sequence 
$x_1,\ldots,x_k$ such that $x_1$ is feasible in $G_0=G$, and $x_{i+1}$
is feasible in $G_i=(G_{i-1})^{^*}_{x_i}$, $i=1,\ldots,k-1$.
Thus, if $M\subset V(G)$ is feasible, then $M\subset V_{SI}(\Gst_M)$, 
but $\Gst_M$ is still not Hamilton-conected.

Note that it is possible that some two vertices $x,y$ of a graph $G$ 
are feasible in $G$, but $x$ is not feasible in $\Gst_y$ 
(for example, if $H$ is obtained from the Petersen graph by adding a
pendant edge to each vertex, subdividing a nonpendant edge $x_1x_2$ 
with a vertex $w$, replacing each of the edges $x_iw$ with a double edge, 
and if $G=L(H)$ and $x'_i,x''_i\in V(G)$ correspond to the two edges joining 
$x_i$ and $w$ in $H$, $i=1,2$, then $G$ is not Hamilton-connected, each of
the vertices $x'_i,x''_i$ is feasible in $G$, $i=1,2$, but e.g. $x'_1$
and $x''_1$ are not feasible in $\Gst_{x'_2}\iso\Gst_{x''_2}$).
Thus, the recursive form of the definition is essential for verifying 
feasibility of a set $M\subset V(G)$ (although the resulting graph $\Gst_M$ 
does not depend on their order).

\ms

Now, for a $\{\claw,\Gt\}$-free graph $G$, we define its $\Gt$-closure 
$G^{\Gt}$ by the following construction.
\begin{mathitem}
\item If $G$ is Hamilton-connected, we define $G^{\Gt}$ as the complete 
  graph.
\item If $G$ is not Hamilton-connected, we recursively perform the
  local completion operation at such feasible sets of vertices for which 
  the resulting graph is still $\Gt$-free, as long as this is possible. 
  We obtain a sequence of graphs $G_1,\ld,G_k$ such that
  \begin{mathitem}
    \item[$\bullet$] $G_1=G$,
    \item[$\bullet$] $G_{i+1}=(G_i)^{^*}_{M_i}$ for some
        set $M_i\subset V(G_i)$, $i=1,\ld,k-1$,
    \item[$\bullet$] $G_k$ has no hamiltonian $(a,b)$-path for some
        $a,b\in V(G_k)$,
    \item[$\bullet$] for any feasible set $M\subset V_{NS}(G_k)$, 
    $(G_k)^{^*}_M$ contains an induced subgraph isomorphic to $\Gt$,
  \end{mathitem}
  and we set $G^{\Gt}=G_k$.
\end{mathitem}
A resulting graph $G^{\Gt}$ is called a {\em $\Gt$-closure} of the graph 
$G$, and a graph $G$ equal to (some) its $\Gt$-closure is said to be 
{\em $\Gt$-closed}.
Note that for a given graph $G$, its $\Gt$-closure is not uniquely determined.

\ms 

The following two results from \cite{RV10} and \cite{RV11} will be useful 
to identify feasible vertices.

%
%
\begin{theoremAcite}{\cite{RV10}}
\label{thmA-HC-2-conn_neighb}
Let $G$ be a claw-free graph and let $x\in V(G)$ be locally 2-connected
in $G$. Then $G$ is Hamilton-connected if and only if $\Gstx$ is 
Hamilton-connected.
\end{theoremAcite}

Thus, in our terminology, Theorem~\ref{thmA-HC-2-conn_neighb} says that 
a locally 2-connected vertex is feasible.

%
%
\begin{lemmaAcite}{\cite{RV11}}
\label{lemmaA-2_indep_sets}
Let $G$ be a claw-free graph, $x\in V(G)$, and let
$H\indsub \la N_G(x)\rag$ be a 2-connected graph containing two
disjoint pairs of independent vertices. Then $x$ is locally 2-connected in $G$.
\end{lemmaAcite}

Note that if a vertex $x\in V(G)$ is feasible by virtue of 
Theorem~\ref{thmA-HC-2-conn_neighb} (i.e., $x$ is locally 2-connected in $G$), 
then, for any $y\in V(G)$ $y\neq x$, $x$ is locally 2-connected also in $\Gst_y$, 
but $\la N_{\Gst_y}(x)\ra_{\Gst_y}$ can be complete (if $N_G(x)\subset N_G(y)$).
Thus, for any $y\in V(G)$, $x$ is feasible or simplicial in $\Gst_y$. 

We thus define more generally: a set $M\subset V(G)$ is {\em weakly feasible 
in $G$} if the vertices in $M$ can be ordered in a sequence $x_1,\ldots,x_k$ 
such that $x_1$ is feasible in $G_0=G$, and $x_{i+1}$ is feasible or simplicial 
in $G_i=(G_{i-1})^{^*}_{x_i}$, $i=1,\ldots,k-1$.
Thus, similarly, if $G$ is not Hamilton-connected and $M\subset V(G)$ is weakly 
feasible in $G$, then $\Gst_M$ is still not Hamilton-conected and all vertices 
of $M$ are simplicial in $\Gst_M$.

\bs

The following theorem is the main result of this paper.

%
%
\begin{theorem}
\label{thm-Gamma3-clos-lajngraf}
Let $G$ be a 3-connected $\{\claw,\Gt\}$-free graph and let $G^{\Gt}$ be its 
$\Gt$-closure. Then there is a multigraph $H$ such that $G^{\Gt}=L(H)$.
\end{theorem}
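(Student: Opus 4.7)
The plan is to invoke Theorem~A~\ref{thmA-BeMe}: a graph is the line graph of a multigraph if and only if it contains no induced copy of any of $G_1,\ldots,G_7$ from Figure~\ref{fig-BeMe}. Since $G$ is claw-free and each step of local completion preserves claw-freeness, $G^{\Gt}$ is claw-free, ruling out $G_1=\claw$. Moreover, local completion only adds edges, so $G^{\Gt}$ inherits the $3$-connectedness of $G$. It therefore remains to show that $G^{\Gt}$ contains no induced copy of any of $G_2,\ldots,G_7$.

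The core argument I would run is by contradiction: suppose $F\indsub G^{\Gt}$ with $F\iso G_i$ for some $i\in\{2,\ldots,7\}$. The goal is to exhibit either a feasible vertex $x\in V(G^{\Gt})$ or, more generally, a weakly feasible set $M\subset V(G^{\Gt})$ such that $(G^{\Gt})^{^*}_M$ is still $\Gt$-free. By the very definition of a $\Gt$-closure, such an $M$ contradicts the maximality of $G^{\Gt}$, so no such $F$ can exist.

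In each of the graphs $G_2,\ldots,G_7$ one locates a ``central'' vertex $x$ whose neighborhood inside $F$ is connected but not complete, so $x$ is eligible in $G^{\Gt}$. To upgrade eligibility to feasibility I would use Theorem~A~\ref{thmA-HC-2-conn_neighb} combined with Lemma~A~\ref{lemmaA-2_indep_sets}: it suffices to exhibit in $\la N_{G^{\Gt}}(x)\ra$ a $2$-connected induced subgraph containing two disjoint pairs of independent vertices. Fouquet's theorem~\ref{thmA-Fouquet}, together with the $3$-connectedness of $G^{\Gt}$, severely constrains $\la N_{G^{\Gt}}(x)\ra$: either it is covered by two cliques (in which case the partial structure inherited from $F$ supplies the required independent pairs) or it contains an induced $C_5$ (which already carries such a configuration). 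Where a single $x$ does not suffice, I would construct a weakly feasible set incrementally, absorbing newly created simplicial vertices and exploiting the edges introduced by each previous local completion.

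The main obstacle is then the verification that the chosen local completion preserves $\Gt$-freeness. A newly created induced $\Gt$ must use at least one new edge $uv\subset N_{G^{\Gt}}(x)$; one has to enumerate, for each new edge $uv$, the possible embeddings of $\Gt$ through $uv$ and show that each such embedding forces either a $\claw$, a $\Gt$ already present in $G^{\Gt}$, or a violation of $3$-connectedness. Given the eight-vertex size of $\Gt$, the combinatorial explosion of embeddings against the six distinct obstructions $G_2,\ldots,G_7$ is what forces a case analysis of substantial size, and I expect this step to be handled by the computer-assisted enumeration announced for Proposition~\ref{prop-closure-W4} in the introduction. A clean structural shortcut seems unlikely precisely because the interaction between $\Gt$ and the Bermond--Meyer obstructions is local and multifarious.
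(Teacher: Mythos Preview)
Your high-level plan (Bermond--Meyer, then contradict maximality of the closure via a feasible set) matches the paper, but there is a genuine gap in how you establish feasibility. The paper does not attack $G_2,\ldots,G_7$ one by one; it first reduces to showing $\bG$ is $W_5$-free, then $W_4$-free, then $\{P_6^2,P_6^{2+}\}$-free (since $G_3\iso W_5$; $G_5,G_6,G_7$ each contain an induced $W_4$; $G_2\iso P_6^2$; $G_4\iso P_6^{2+}$). The order matters: once $W_5$-freeness is in hand, Fouquet's theorem (together with $\alpha(\bG)\geq 3$ from Chv\'atal--Erd\H{o}s) forces \emph{every} neighborhood to be covered by two cliques, so your ``or it contains an induced $C_5$'' branch never occurs in $\bG$. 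But two cliques covering $N(x)$ with small overlap do \emph{not} give a $2$-connected $\la N(x)\ra$, so your route to feasibility via Lemma~\ref{lemmaA-2_indep_sets} alone breaks down precisely for $P_6^2$ and $P_6^{2+}$: neither the degree-$4$ vertices $v_2,v_3$ nor the degree-$3$ vertices $v_1,v_4$ inherit a $2$-connected neighborhood from the obstruction.

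The missing ingredient is Proposition~\ref{propA-subgraph_S} and its Corollary~\ref{coro-subgraph_S}: since $v_1v_4\notin E(\bG)$ and both are eligible, if neither were feasible then at least one of them would be a degree-$4$ vertex of an induced copy of the graph $S$ of Figure~\ref{fig-graph_S}. The paper then runs a substantial case analysis on how the central triangle of $S$ can sit relative to $Q\iso P_6^2$ (or $P_6^{2+}$), in each branch deriving either a claw/$W_4$/$W_5$ or local $2$-connectedness of $v_1$ via Lemma~\ref{lemmaA-2_indep_sets}. Only after this is feasibility of some $v_i\in\{v_1,v_4\}$ secured, and then the $\Gt$-freeness of $\bGst_{v_i}$ is verified. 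Your proposal also misses that in the $W_4$ case the paper needs genuinely weakly feasible \emph{sets} $M_i\supsetneq\{x\}$ (up to five vertices), with the computer check applied to the possible ``tails'' of a $\Gt$ hanging off $N_{\bG}[M_i]$, not merely to a single local completion.
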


\noi
{\bf Proof.} \quad 
Let $G$ be a $\{\claw,\Gt\}$-free graph, and let $\bG$ be (some) its $\Gt$-closure.
To show that $\bG$ is a line graph of a multigraph, by Theorem~\ref{thmA-BeMe},
we show that $\bG$ does not contain as an induced subgraph any of the graphs 
$G_1,\ldots,G_7$ of Figure~\ref{fig-BeMe}.
If $G$ is Hamilton-connected, then $\bG$ is complete and 
the statement is trivial. So, assume that $G$ (and hence also $\bG$) 
is not Hamilton-connected. 

Since $G_1\iso\claw$ and $\bG$ was obtained from $G$ by a series of local 
completions, obviously $\bG$ is $G_1$-free. We prove the following 
three facts (for the graphs $W_5$, $W_4$, $P_6^2$ and $P_6^{2+}$ see 
Fig.~\ref{fig-kola_a_cesty}):
\begin{mathitem}
 \item[$\bullet$] $\bG$ is $W_5$-free,
 \item[$\bullet$] $\bG$ is $W_4$-free,
 \item[$\bullet$] $\bG$ is $\{P_6^2,P_6^{2+}\}$-free.
\end{mathitem}
This will establish the result since $G_3\iso W_5$, each of the graphs 
$G_5,G_6,G_7$ contains an induced $W_4$, $G_2\iso P_6^2$, and 
$G_4\iso P_6^{2+}$.

%
%
\begin{figure}[ht]
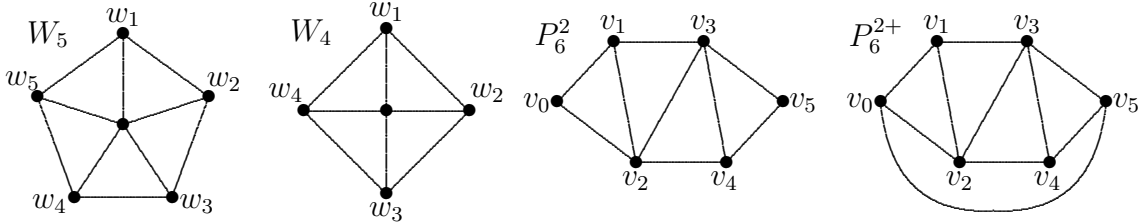

$$\bp
\setcoordinatesystem units <1mm,1mm>
\setplotarea x from -50 to 50, y from -7 to 7
\put{\beginpicture
\setcoordinatesystem units <.1mm,.1mm>
\setplotarea x from -120 to 120, y from -120 to 120
\put{$\bullet$} at     0   0
\put{$\bullet$} at     0 120
\put{$\bullet$} at   114  37
\put{$\bullet$} at    65 -97
\put{$\bullet$} at   -65 -97
\put{$\bullet$} at  -114 37
\put{$w_1$} at     0  143
\put{$w_2$} at   134   57
\put{$w_3$} at    98 -105
\put{$w_4$} at   -98 -105
\put{$w_5$} at  -134   58
\plot  0 120   114 37  65 -97  -65 -97  -114 37   0 120 /
\plot  0 0    0 120 /
\plot  0 0  114  37 /
\plot  0 0   65 -97 /
\plot  0 0  -65 -97 /
\plot  0 0 -114  37 /
\put{$W_5$} at  -100 120
\endpicture} at -65  0
\put{\beginpicture
\setcoordinatesystem units <.1mm,.1mm>
\setplotarea x from -120 to 120, y from -120 to 120
\put{$\bullet$} at     0    0
\put{$\bullet$} at     0  110
\put{$\bullet$} at   110    0
\put{$\bullet$} at     0 -110
\put{$\bullet$} at  -110    0
\put{$w_1$} at     0  133
\put{$w_2$} at   134   25
\put{$w_3$} at     0 -133
\put{$w_4$} at  -134   25
\plot  0 110   110 0  0 -110  -110 0  0 110 /
\plot  0 0    0  110 /
\plot  0 0  110    0 /
\plot  0 0    0 -110 /
\plot  0 0 -110    0 /
\put{$W_4$} at  -100 105
\endpicture} at -30  0
\put{
\bp
\setcoordinatesystem units <0.3mm,.8mm>
\setplotarea x from -20 to 20, y from -15 to 10
\put{$\bullet$} at  -50   0
\put{$\bullet$} at  -25  10
\put{$\bullet$} at  -15 -10
\put{$\bullet$} at   15  10
\put{$\bullet$} at   25 -10
\put{$\bullet$} at   50   0
\plot -50 0 -25 10  15 10  50 0  25 -10 -15 -10  -50 0 /
\plot -25 10   -15 -10  15 10  25 -10 /
\put{$v_0$} at  -59   0
\put{$v_1$} at  -25  13
\put{$v_2$} at  -15 -13
\put{$v_3$} at   15  13
\put{$v_4$} at   24 -13
\put{$v_5$} at   59   0
\put{$P_6^{2}$} at -52 11
\ep} at  7 1
\put{
\bp
\setcoordinatesystem units <0.3mm,.8mm>
\setplotarea x from -20 to 20, y from -15 to 10
\put{$\bullet$} at  -50   0
\put{$\bullet$} at  -25  10
\put{$\bullet$} at  -15 -10
\put{$\bullet$} at   15  10
\put{$\bullet$} at   25 -10
\put{$\bullet$} at   50   0
\plot -50 0 -25 10  15 10  50 0  25 -10 -15 -10  -50 0 /
\plot -25 10   -15 -10  15 10  25 -10 /
\put{$v_0$} at  -59   0
\put{$v_1$} at  -25  13
\put{$v_2$} at  -15 -13
\put{$v_3$} at   15  13
\put{$v_4$} at   24 -13
\put{$v_5$} at   59   0
\setquadratic
\plot -50 0  -35 -14   0 -18   35 -14   50 0 /
\setlinear
\put{$P_6^{2+}$} at -52 11
\ep} at   50 1
\ep$$
\bsm
\caption{The 5-wheel $W_5$, the 4-wheel $W_4$, and the graphs $P_6^2$ and 
        $P_6^{2+}$}
\label{fig-kola_a_cesty}
\end{figure}

This will be done in the following Propositions~\ref{prop-closure-W5},
\ref{prop-closure-W4} and \ref{prop-closure-P6}.

\ms

Throughout the proof, when listing vertices of an induced $W_5$ or $W_4$,
we will always list the center first, and we will list vertices of a $P_6^2$
or a $P_6^{2+}$ in the order indicated by the indices in 
Fig.~\ref{fig-kola_a_cesty}.

%
%
\begin{proposition}
\label{prop-closure-W5}
Let $G$ be a $\{\claw,\Gt\}$-free graph and let $\bG$ be its $\Gt$-closure.
Then $\bG$ is $W_5$-free.
\end{proposition}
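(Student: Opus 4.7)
I will argue by contradiction. Suppose $\bG$ contains an induced $W_5$ with center $w_0$ and rim $w_1w_2w_3w_4w_5$. Then $N_{\bG}(w_0)$ induces a $C_5$ which is $2$-connected and contains the disjoint independent pairs $\{w_1,w_3\}$ and $\{w_2,w_4\}$. Lemma~\ref{lemmaA-2_indep_sets} then gives that $w_0$ is locally $2$-connected in $\bG$, and Theorem~\ref{thmA-HC-2-conn_neighb} gives that $w_0$ is feasible. As $\bG$ is $\Gt$-closed and $w_0$ is nonsimplicial, the local completion at $w_0$ was rejected only because it would create an induced $\Gt$; hence there exists $F \indsub \bGst_{w_0}$ with $F \iso \Gt$. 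Since $\bG$ itself is $\Gt$-free, $F$ must use at least one \emph{new edge}---a chord $w_iw_j$ of the $C_5$ that is present in $\bGst_{w_0}$ but not in $\bG$.

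\textbf{Structural setup and main case.} Label $V(F)$ as $t_1,t_2,p_1,p_2,p_3,p_4,t_3,t_4$ (triangles $\{t_1,t_2,p_1\}$ and $\{t_3,t_4,p_4\}$; path $p_1p_2p_3p_4$). The set $\{w_0,w_1,\ldots,w_5\}$ induces $K_6$ in $\bGst_{w_0}$, and $\omega(\Gt)=3$, so $|V(F)\cap\{w_0,\ldots,w_5\}|\le 3$; and $|V(F)\cap\{w_1,\ldots,w_5\}|\ge 2$ since each new edge lies between two rim vertices. The key observation is that $p_1$ has $N_F(p_1)=\{t_1,t_2,p_2\}$ inducing $K_2+K_1$ with $t_1t_2$ as the only edge. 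So if a new edge of $F$ plays the role of $t_1t_2$ (or symmetrically $t_3t_4$), and $p_1\notin N[w_0]$, then in $\bG$ the edge $t_1t_2$ is absent while $p_1t_1$, $p_1t_2$, $p_1p_2$ persist and the non-edges $t_1p_2$, $t_2p_2$ survive. Consequently $\{t_1,t_2,p_2\}\subset N_{\bG}(p_1)$ is independent in $\bG$, giving an induced claw at $p_1$, which contradicts the claw-freeness of $\bG$. The sub-case $p_1\in N[w_0]$ splits: $p_1=w_0$ forces $p_2\in N_{\bG}(w_0)=\{w_1,\ldots,w_5\}$ and hence $|V(F)\cap N[w_0]|\ge 4$, a contradiction; and $p_1=w_k\in\{w_1,\ldots,w_5\}$ leads to an induced $\Gt$ already in $\bG$, obtained by pairing a $W_5$-triangle such as $\{w_0,w_{k-1},w_k\}$ with the $F$-triangle $\{t_3,t_4,p_4\}$, joined by the length-$3$ path $w_k\,p_2\,p_3\,p_4$ (sitting inside $F$ and avoiding the new edge $t_1t_2$). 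That this $8$-vertex subgraph is induced in $\bG$ follows directly from the induced-ness of $F$ in $\bGst_{w_0}$.

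\textbf{Remaining cases and main obstacle.} If no new edge of $F$ is a triangle base, then each new edge is either a triangle side edge ($t_1p_1$, $t_2p_1$, $t_3p_4$, $t_4p_4$) or a path edge ($p_1p_2$, $p_2p_3$, $p_3p_4$). For a triangle-side new edge $t_1p_1=w_iw_j$, the claw argument at $p_1$ still applies: the set $\{w_0,t_2,p_2\}\subset N_{\bG}(p_1)$ is independent in $\bG$ (all three pairs being non-edges either because the vertex is outside $N(w_0)$ or because the edge is not present in $F$ and is not new), giving a claw in $\bG$. The path-edge case is the most delicate and the main obstacle of the proof: here no vertex of $F$ acquires a direct claw, and one must invoke claw-freeness of $\bG$ at the endpoints of the new edge (which are rim vertices, each with independent pair $\{w_{j-1},w_{j+1}\}$ in its neighborhood) to force specific adjacencies between $V(F)\setminus N[w_0]$ and the rim vertices not in $V(F)$; these forced adjacencies, combined with a suitable $W_5$-triangle and the intact $F$-triangle, exhibit an induced $\Gt$ already in $\bG$, again contradicting the $\Gt$-freeness of $\bG$. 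Enumerating the several positions of the new edge on the path, and verifying that the exhibited $\Gt$ is genuinely induced in each, is the technical bookkeeping; the same strategy but on a much larger case tree underlies the computer-assisted treatment of Proposition~\ref{prop-closure-W4}.
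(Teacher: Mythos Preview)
Your strategy --- the center $w_0$ is locally $2$-connected hence feasible, so $\bGst_{w_0}$ contains an induced $\Gt$ using a new edge, and one seeks a contradiction --- matches the paper's. But your execution rests on the false assumption that $N_{\bG}(w_0)=\{w_1,\ldots,w_5\}$. In general $N_{\bG}(w_0)$ is larger, and new edges join arbitrary non-adjacent vertices of $N_{\bG}(w_0)$, not just rim vertices of the $C_5$. This breaks the inequality $|V(F)\cap\{w_1,\ldots,w_5\}|\ge 2$, the sub-case split ``$p_1=w_0$ or $p_1\in\{w_1,\ldots,w_5\}$'' (since $p_1$ can lie in $N_{\bG}(w_0)\setminus\{w_0,w_1,\ldots,w_5\}$), and the triangle-side claw at $p_1$ with leaves $\{w_0,t_2,p_2\}$ (which fails whenever $t_2\in N_{\bG}(w_0)$, a possibility you do not exclude). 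Even granting the rim assumption, the $\Gt$ you exhibit in the sub-case $p_1=w_k$ is induced only when $w_{k-1}\in\{t_1,t_2\}$; if $w_{k-1}\notin V(F)$ nothing controls edges from $w_{k-1}$ to $\{p_2,p_3,p_4,t_3,t_4\}$.

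Moreover, the path-edge case you defer as ``technical bookkeeping'' is in fact the bulk of the argument, not a routine appendix. The paper eliminates the triangle case in one line via two preparatory lemmas: any two vertices of $N_{\bG}(w_0)$ are at distance at most $2$ in $\langle N_{\bG}(w_0)\rangle_{\bG}$ (a consequence of the induced $W_5$), whereas the endpoints of a new triangle-edge of $F$ must be at distance exactly $3$ there (a general fact for any eligible vertex in a $\{\claw,\Gt\}$-free graph). These contradict each other and dispatch all triangle configurations at once, with no sub-casing on $p_1$. For the path case the paper proves a third lemma --- any two vertices adjacent to $w_0$ and to both endpoints of the new path-edge must be adjacent to each other --- and then carries out a genuine case analysis on $|V(W)\cap V(F)|\in\{0,1,2\}$, each branch producing a claw or an induced $\Gt$ in $\bG$. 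Your proposal contains none of these three structural lemmas and does not perform the enumeration, so the proof is incomplete.
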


\begin{proof}
If $x$ is a center of an induced $W_5$ in $\bG$, then $x$ is locally 
2-connected in $\bG$ by Lemma~\ref{lemmaA-2_indep_sets},
hence $x$ is feasible in $\bG$ by Theorem~\ref{thmA-HC-2-conn_neighb}.
Thus, by the definition of the $\Gt$-closure, $\bGstx$ contains an induced 
$\Gt$. To reach a contradiction, we show that this is not possible. 

We first prove several lemmas.

%
%
\begin{lemma}
\label{lemma-W5-neighbors}
Let $G$ be a claw-free graph and $F\indsub G$, $F\iso W_5$, with center $x$ 
and cycle $C$. Then
\begin{mathitem}
 \item every vertex $y\in N_G(x)\sm V(F)$ has at least three consecutive
    neighbors on $C$,
 \item for every $y_1,y_2\in N_G(x)$, $\dist_{\la N_G(x)\rag}(y_1,y_2)\leq 2$,
 \item for every $y\in N_G(x)\sm V(F)$ that is not in an induced $C_5$
    in $\la N_G(x)\rag$, every induced $C_5$ in $\la N_G(x)\rag$ contains 
    at least 4 neighbors of $y$.
\end{mathitem}
\end{lemma}

\begin{proof} Let $F=\lab w,w_1,w_2,w_3,w_4,w_5\rabg\iso W_5$.

\ms

$(i)$ If $N_G(y)\cap V(C)=\emptyset$, then $x$ is a center of 
a claw; thus, say, $w_1y\in E(G)$. Since $\lab w_1,y,w_2,w_5\rabg\niso\claw$, 
by symmetry, $w_2y\in E(G)$. If $y$ is adjacent to neither $w_3$ nor $w_5$,
then $\lab x,w_3,w_5,y\rabg\iso\claw$; thus, by symmetry, say, 
$yw_3\in E(G)$.

\ms

$(ii)$ By $(i)$, any $y_1,y_2\in N_G(x)$ are adjacent or have a common 
neighbor in $N_G(x)$. 

\ms 

$(iii)$ Let $C'=w'_1w'_2w'_3w'_4w'_5w'_1$ be an induced $C_5$ in $N_G(x)$. 
By $(i)$, $y$ has at least 3 con\-secutive neighbors on $C'$. If 
$|N_G(y)\cap V(C')|\geq 4$, we are done, thus, let, say, 
$N_G(y)\cap V(C')=\{w'_1,w'_2,w'_3\}$. But then 
$\lab w'_1, y, w'_3,w'_4,w'_5\rabg\iso C_5$, contradicting the assumption.
\end{proof}

%
%
\begin{lemma}
\label{lemma-W5-dist3}
Let $i\geq 1$, let $G$ be a $\{\claw,\Gi\}$-free graph, and let 
$x\in V_{EL}(G)$. Let $F\indsub \Gstx$ be such that $F\iso\Gi$ and a 
triangle of $F$ contains a new edge $y_1y_2$. 
Then $\dist_{\la N_G(x)\rag}(y_1,y_2)=3$.
\end{lemma}

\begin{proof}
Let $F\indsub\Gstx$, $F\iso\Gi$, and let $y_1y_2$ be a new edge in some 
of its triangles. We use the labeling of the vertices of $F$ as in 
Fig.~\ref{fig-special_graphs}$(d)$.
Since $\la N_G(x)\ra_{\Gstx}$ is a clique, all new edges in $F$ are in one 
of its triangles, say, $t_1t_2p_1t_1$. If $t_1t_2$ is the only new edge, 
then $\lab p_1,p_2,t_1,t_2\rabg\iso\claw$, and if all three edges are new, 
then $\lab x,t_1,t_2,p_1\rabg\iso\claw$. Thus, by symmetry, we can choose 
the notation such that the edge $t_1p_1$ (and possibly also one of the 
edges $t_2p_1$, $t_1t_2$) is new, i.e., $y_1=t_1$ and $y_2=p_1$. 

Note that $2\leq\dist_{\la N_G(x)\rag}(t_1,p_1)\leq 3$ since $t_1p_1$ is a 
new edge and $x$ is not the center of an induced claw.
Suppose, to the contrary, that $\dist_{N_G(x)}(t_1,p_1)=2$.
If $xt_2,p_1t_2\in E(G)$, then 
$\lab x,t_2,p_1\car p_2,\ldots, p_{i+1},t_3,t_4\rabg\iso\Gi$,
a contradiction. Thus, by the assumption that 
$\dist_{\la N_G(x)\rag}(t_1,p_1)=2$,
there is a vertex $z\in N_G(x)$ such that $zt_1,zp_1\in E(G)$ (note that
possibly $x=t_2$). Since 
$\lab z,x,p_1,p_2,\ldots,p_{i+1},t_3,t_4\rabg\niso\Gi$, $zw\in E(G)$
for some $w\in\{p_2,\ldots,p_{i+1},t_3,t_4\}$. However, if 
$w\in\{p_3,\ldots,p_{i+1},t_3,t_4\}$, then $\lab z,t_1,p_1,w\rabg\iso\claw$.
Hence $zw\notin E(G)$ for $w\in\{p_3,\ldots,p_{i+1},t_3,t_4\}$, implying
$zp_2\in E(G)$, and then 
$\lab t_1,x,z,p_2,\ldots,p_{i+1},t_3,t_4\rabg\iso\Gi$, a contradiction.
\end{proof}

%
%
\begin{lemma}
\label{lemma-W5-new-in-path}
Let $i\geq 2$ and $1\leq j\leq i$.
Let $G$ be a $\{\claw,\Gi\}$-free graph and $x\in V_{NS}(G)$ 
such that $\Gstx$ contains an induced subgraph $F\iso\Gi$ with a new
edge $p_jp_{j+1}$ in the path of $F$. For $k=1,2$, let $z_k\in V(G)$ be
such that $\{ p_j,p_{j+1},x\}\subset N_G(z_k)$. 
Then $z_1z_2\in E(G)$. 
\end{lemma}

\begin{proof}
Let, to the contrary, $z_1z_2\notin E(G)$. By symmetry, we can choose 
the notation such that $1\leq j\leq i-1$ (i.e., $p_{j+1}$ is not the 
last vertex of the path of $F$). 
Since $\lab p_{j+1},z_1,z_2,p_{j+2}\rabg\niso\claw$, we have, up to a 
symmetry, $z_2p_{j+2}\in E(G)$. Since each of the edges $z_2w$, 
$w\in\{t_1,t_2,p_1,\ldots,p_{j-1},p_{j+3},\ldots,p_{i+1},t_3,t_4\}$
yields an induced $\claw$ in $G$ with center at $z_2$, we have 
$\lab t_1,t_2,p_1,\ldots,p_j,z_2,p_{j+2},\ldots,p_{i+1},t_3,t_4\rabg\iso\Gi$, 
a contradiction.
\end{proof}

Suppose now, to the contrary, that $\bG$ contains an induced subgraph 
$W\iso W_5$, set $W=\lab x,w_1,w_2,w_3,w_4,w_5\rabgb$, and let 
$F\indsub \bGstx$ be such that $F\iso\Gt$. Set 
$F=\lab t_1,t_2,p_1,p_2,p_3\car p_4,t_3,t_4\rabgb$. Since $\bG$ is 
$\Gt$-free, at least one edge of $F$ is new.

If there is a new edge $y_1y_2$ in a triangle of $F$, then we have 
$\dist_{\bG}(y_1,y_2)=2$ by Lemma~\ref{lemma-W5-neighbors}$(ii)$, and 
$\dist_{\bG}(y_1,y_2)=3$ by Lemma~\ref{lemma-W5-dist3}, a contradiction.
Thus, a new edge is on the path of $F$, and, moreover, $F$ has exactly one 
new edge since it is induced and $x$ is simplicial in $\bGstx$. 

\ms

Choose $W\iso W_5$ so that $|V(W)\cap V(F)|$ is maximized. 
Clearly, $|V(W)\cap V(F)|\leq 2$, and we can choose the notation such 
that the new edge is $p_1p_2$ or $p_2p_3$.

\begin{mylist}
  \item[\underline{\bf Case 1:}] $|V(W)\cap V(F)|=2$.
  \begin{mylist}
    \item[\underline{\bf Subcase 1.1:}] {\sl The edge $p_1p_2$ is new.} \quad \\
     We can choose the notation such that $p_1=w_2$ and $p_2=w_5$.
     First observe that since 
     $\lab w_2,w_3,x,w_5,p_3,p_4,t_3,t_4\rabgb\niso\Gt$, $w_3z\in E(\bG)$
     for some $z\in \{p_3,p_4,t_3,t_4\}$. 
     Since $\lab w_2,t_1,w_1,w_3\rabgb\niso\claw$, $t_1w_1\in E(\bG)$ or 
     $t_1w_3\in E(\bG)$. But if $t_1w_3\in E(\bG)$, then 
     $\lab w_3,t_1,x,z\rabgb\iso\claw$, a contradiction. Hence 
     $t_1w_1\in E(\bG)$. Since 
     $\lab t_1,w_2,w_1,w_5,p_3\car p_4\car t_3,t_4\rabgb\niso\Gt$, we have 
     $w_1z'\in E(\bG)$ for some $z'\in\{p_3,p_4,t_3,t_4\}$, but for each 
     of these possibilities, $\lab w_1,t_1,x,z'\rabgb\iso\claw$, 
     a contradiction.
    \item[\underline{\bf Subcase 1.2:}] {\sl The edge $p_2p_3$ is new.} \quad \\
     Choose the notation such that $p_2=w_2$ and $p_3=w_5$. Considering 
     $\lab w_2,w_1,w_3,p_1\rabgb\niso\claw$, we have $p_1w_1\in E(\bG)$ or
     $p_1w_3\in E(\bG)$. Let first $p_1w_1\in E(\bG)$. Since 
     $\lab t_1,t_2,p_1,w_1\car w_3,p_4,t_3,t_4\rabgb\niso\Gt$, $w_1z\in E(\bG)$
     for some $z\in\{t_1,t_2\}\cup\{p_4,t_3,t_4\}$; however, if 
     $z\in\{t_1,t_2\}$, then $\lab w_1,z,w_2,w_5\rabgb\iso\claw$, and if 
     $z\in\{p_4,t_3,t_4\}$, then $\lab w_1,z,x,p_1\rabgb\iso\claw$. 
     Hence $p_1w_3\in E(\bG)$. Symmetrically, $p_4w_4\in E(\bG)$. 
     Since $\lab t_1,t_2,p_1,w_3,w_4\car p_4,t_3,t_4\rabgb\niso\Gt$, 
     $w_3$ or $w_4$ has a neighbor among the vertices $t_1,t_2,t_3,t_4$.
     However, if $w_3z\in E(\bG)$ for $z\in\{t_1,t_2\}$, then 
     $\lab w_3,z,w_2,w_4\rabgb\iso\claw$, and if $w_3z\in E(\bG)$ for 
     $z\in\{t_3,t_4\}$, then $\lab w_3,p_1,x,z\rabgb\iso\claw$; the 
     situation for $w_4$ is symmetric. Thus,   
     $\lab t_1,t_2,p_1,w_3,w_4,p_4\car t_3,t_4\rabgb\iso\Gt$, 
     a contradiction.
  \end{mylist}
  \item[\underline{\bf Case 2:}] $|V(W)\cap V(F)|=1$. \quad \\
   Let the new edge be $p_ip_{i+1}$, $i\in\{1,2\}$, and, by symmetry, choose 
   the notation such that $p_{i+1}=w_1$ (note that our proof of this case 
   does not use the rest of $F$ and hence it is symmetric also for $i=1$). 
   Since $\lab x,p_i,w_2,w_5\rabgb\niso\claw$, by symmetry, we have 
   $p_iw_5\in E(\bG)$, and, by Lemma~\ref{lemma-W5-new-in-path}, 
   $p_iw_2\notin E(\bG)$. Since $\lab x,p_i,w_1,w_3\rabgb\niso\claw$, 
   $p_iw_3\in E(\bG)$. Then the subgraph 
   $W'=\lab x,w_1,w_2,w_3,p_i,w_5\rabgb\iso W_5$ is an induced $W_5$ in~$\bG$
   with $|V(W')\cap V(F)|=2$, contradicting the choice of $W$.
  \item[\underline{\bf Case 3:}] $V(W)\cap V(F)=\emptyset$. \quad \\
   Let again $p_ip_{i+1}$, $i\in\{1,2\}$, be the new edge in $F$. By 
   Lemma~\ref{lemma-W5-neighbors}$(iii)$, each of $p_i,p_{i+1}$ has at least 
   4 neighbors on the 5-cycle $C=W-x$. By the pigeonhole principle, $p_i$
   and $p_{i+1}$ have at least 3 common neighbors on $C$. Since $C$ is 
   induced, at least two of these common neighbors are nonadjacent, 
   contradicting Lemma~\ref{lemma-W5-new-in-path}.
\end{mylist}
\bsm\bsm
\end{proof}

Note that now we know that $\bG$ is $\{\claw,W_5\}$-free. 
Since $\bG$ is 3-connected and not Hamilton-connected, we have $\alpha(\bG)\geq 3$
by Theorem~\ref{thmA-Chvatal-Erdos}, hence for every $v\in V(G)$, 
$\NbG(v)$ can be covered by two cliques. This fact will be useful in the next proof.

%
%
\begin{proposition}
\label{prop-closure-W4}
Let $G$ be a 3-connected $\{\claw,\Gt\}$-free graph and let $\bG$ be its $\Gt$-closure.
Then $\bG$ is $W_4$-free.
\end{proposition}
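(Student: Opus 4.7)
The plan is to mimic the outline of Proposition~\ref{prop-closure-W5} while leveraging the new fact that $\bG$ is $W_5$-free. Suppose for contradiction that $\bG$ contains an induced subgraph $W=\lab x,w_1,w_2,w_3,w_4\rabgb\iso W_4$ with center $x$. The induced $4$-cycle $C=w_1w_2w_3w_4w_1\indsub\la\NbG(x)\ragb$ is $2$-connected and contains the two disjoint nonadjacent pairs $\{w_1,w_3\}$ and $\{w_2,w_4\}$, so Lemma~\ref{lemmaA-2_indep_sets} gives that $x$ is locally $2$-connected in $\bG$, and Theorem~\ref{thmA-HC-2-conn_neighb} then says $x$ is feasible in $\bG$. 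By the definition of the $\Gt$-closure, $\bGstx$ must therefore contain an induced copy $F\iso\Gt$; I label its vertices $t_1,t_2,p_1,p_2,p_3,p_4,t_3,t_4$ as in Fig.~\ref{fig-special_graphs}$(d)$ and note that at least one edge of $F$ is new (otherwise $F\indsub\bG$, contradicting $\Gt$-freeness of $\bG$).

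Next I would extract the basic structural restrictions on $F$. Every new edge has both endpoints in $\NbG(x)$, and $\NbG(x)$ is a clique in $\bGstx$; since the clique number of $\Gt$ is $3$, this forces $|V(F)\cap\NbG(x)|\leq 3$, with equality only if the three vertices form a triangle of $F$. If $x\in V(F)$, then $x$ must be a degree-$2$ vertex of $F$ whose two $F$-neighbors are $F$-adjacent, hence $x\in\{t_1,t_2,t_3,t_4\}$. Lemma~\ref{lemma-W5-dist3} forbids any new edge inside a triangle of $F$ unless its two endpoints are at distance exactly $3$ in $\la\NbG(x)\ragb$, and Lemma~\ref{lemma-W5-new-in-path} heavily restricts the behaviour of any common neighbour of $x$ and the endpoints of a new path-edge.

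With these constraints in hand, I would choose $W,F$ so that $|V(W)\cap V(F)|$ is maximized, and follow the case structure of Proposition~\ref{prop-closure-W5}: a case of large intersection, intermediate subcases split by which path-edge (or admissible triangle-edge) is new, and finally the case with $V(W)\cap V(F)=\emptyset$. In each case, the strategy is to use claw-freeness of $\bG$ repeatedly (every non-edge forced by the $\Gt$-pattern becomes an edge on pain of producing a $\claw$), to invoke $W_5$-freeness of $\bG$ (now available from Proposition~\ref{prop-closure-W5}) together with Fouquet's Theorem~\ref{thmA-Fouquet} (so every neighborhood splits into two cliques, since none contains an induced $C_5$), and then to locate a forbidden configuration~-- a $\claw$, a $W_5$, or a genuine induced $\Gt$~-- inside $\bG$ itself.

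The main obstacle is the combinatorial explosion of subcases: a $C_4$ around $x$ is much less rigid than the $C_5$ of Proposition~\ref{prop-closure-W5}, so there are many more ways in which the vertices of $F$ can sit with respect to the rim of $W$, to $\NbG(x)\sm V(W)$, and to $V(\bG)\sm\NbG[x]$; the unique new edge can be any of several path-edges; and when $|V(W)\cap V(F)|$ is small, adjacencies between $F$-vertices outside $\NbG[x]$ and the rim of $W$ have to be tracked carefully. This is precisely why the authors delegate Case~1 and Subcase~2.2 to a computer enumeration: the manually transparent configurations quickly yield a $\claw$ or a $W_5$ in $\bG$, while the residual ones are sufficiently numerous to require a systematic check. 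My plan would accordingly be to isolate by hand the configurations in which a small number of claw-forced adjacencies already yields a contradiction, and to formalize the remaining cases as an exhaustive enumeration over the finitely many parameters~-- which edge of $F$ is new, and how $V(F)$ distributes across $V(W)$, $\NbG(x)\sm V(W)$, and $V(\bG)\sm\NbG[x]$.
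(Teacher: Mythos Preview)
Your setup is correct and matches the paper: $x$ is locally $2$-connected by Lemma~\ref{lemmaA-2_indep_sets}, hence feasible, so $\bGstx$ contains an induced $\Gt$, and the task is to derive a contradiction. Your use of Lemmas~\ref{lemma-W5-dist3} and~\ref{lemma-W5-new-in-path} and of the $W_5$-freeness of $\bG$ is also in line with the paper. However, the primary case split in the paper is not by $|V(W)\cap V(F)|$ but by the location of the new edge (triangle versus path); the intersection size is only analysed within the path case.

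The real gap is in your stated endgame. You propose to ``locate a forbidden configuration---a $\claw$, a $W_5$, or a genuine induced $\Gt$---inside $\bG$ itself'', and to delegate the residual cases to a computer enumeration with that same target. This is precisely what fails in Case~1 (new edge in a triangle). After building the forced subgraph $F_0$ via Lemma~\ref{lemma-W4-substructure} and enumerating all $\{\claw,\Gt,W_5\}$-free completions, the paper is left with ten graphs $F_1,\ldots,F_{10}$ that are \emph{themselves} $\{\claw,\Gt,W_5\}$-free; no forbidden configuration can be found in $\bG$ on these vertices. The paper's key additional idea, which your plan does not contain, is to exploit the definition of the $\Gt$-closure for feasible \emph{sets} rather than for the single vertex $x$: for each $F_i$ one identifies a weakly feasible set $M_i\supsetneq\{x\}$ (typically $\{x,w_1,w_2,w_3,w_4\}$), argues via Claims~\ref{claim-W4-1}--\ref{claim-W4-3} (using Proposition~\ref{propA-subgraph_S} and Corollary~\ref{coro-subgraph_S}) that $M_i$ really is weakly feasible, and then observes that by the closure definition $\bGst_{M_i}$ must contain an induced $\Gt$. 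A second computer enumeration over all possible ``tails'' attached to $\NbG[M_i]$ then shows that no such $\Gt$ can exist. Without this two-level feasibility argument your plan stalls at the ten exceptional graphs.
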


\begin{proof}
Similarly as before, if $x\in V(G)$ is a center of an induced $W_4$ in 
$\bG$, then $x$ is locally 2-connected in $\bG$ by 
Lemma~\ref{lemmaA-2_indep_sets}, hence $x$ is feasible in $\bG$ by 
Theorem~\ref{thmA-HC-2-conn_neighb}, and $\bGstx$ contains an induced 
$\Gt$. We again show that this is not possible.

%
%
\begin{lemma}
\label{lemma-W4-substructure}
Let $G$ be a claw-free graph, let $W$ be an induced subgraph of $G$
such that $W=\lab x,w_1,w_2,w_3,w_4\rabg\iso W_4$, let $z_1,z_2\in N_G(x)$ 
be such that $\dist_{\la N_G(x)\rag}(z_1,z_2)=3$,
and let $R$ be the graph shown in Fig.~\ref{fig-W4-substructure}.
Then $G$ contains $R$ as an induced subgraph.
\end{lemma}

%
%
\begin{figure}[ht]
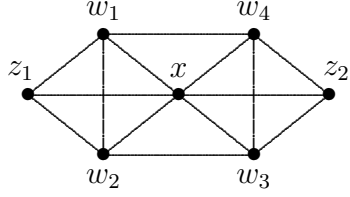

$$\bp
\setcoordinatesystem units <1mm,1mm>
\setplotarea x from -10 to 10, y from -7 to 7
\put{\beginpicture
\setcoordinatesystem units <1mm,0.8mm>
\setplotarea x from -12 to 12, y from -12 to 12
\put{$\bullet$} at     0    0
\put{$\bullet$} at    10   10
\put{$\bullet$} at    10  -10
\put{$\bullet$} at   -10  -10
\put{$\bullet$} at   -10   10
\put{$\bullet$} at    20    0
\put{$\bullet$} at   -20    0
\put{$x$} at     0  4
\put{$w_1$} at    -10  14
\put{$w_2$} at    -10 -14
\put{$w_3$} at     10 -14
\put{$w_4$} at     10  14
\put{$z_1$} at    -21   4
\put{$z_2$} at     21   4
\plot  -20 0  -10 10  10 10  20 0  10 -10  -10 -10  -20 0 /
\plot  -10 -10  -10 10  10 -10  10 10  -10 -10 /
\plot  -20 0  20 0 /
\endpicture} at 0  0
\ep$$
\bsm
\caption{The graph $R$ used in Lemma~\ref{lemma-W4-substructure}.}
\label{fig-W4-substructure}
\end{figure}

\begin{proof}
Let $C=W-x$ be the 4-cycle $C=w_1w_2w_3w_4$ of the 4-wheel $W$. 
First observe that $\dist_{\la N_G(x)\rag}(z_i,C)\leq 1$ since otherwise
e.g. $\lab x,z_i,w_1,w_3\rabg\iso\claw$, $i=1,2$. 
On the other hand, if some $z_i$ is on $C$, say, $z_1=w_1$, then 
$w_2z_2,w_4z_2\notin E(G)$ by the assumption that
$\dist_{\la N_G(x)\rag}(z_1,z_2)=3$, and then 
$\lab x,w_2,w_4,z_2\rabg\iso\claw$. Hence $\dist_{\la N_G(x)\rag}(z_i,C)=1$,
$i=1,2$. 

Up to a symmetry, let $z_1w_1\in E(G)$. Since 
$\lab x,w_2,w_4,z_2\rabg\niso\claw$, up to a symmetry, $w_4z_2\in E(G)$. 
Since $\lab x,z_1,w_2,w_4\rabg\niso\claw$ and $z_1w_4\notin E(G)$ by the 
assumption $\dist_{\la N_G(x)\rag}(z_1,z_2)=3$, we have $z_1w_2\in E(G)$. 
Symmetrically, $w_3z_2\in E(G)$. 
Thus, we have the graph $R$.
Finally, the subgraph is induced since any additional edge would 
contradict either the assumption $\dist_{\la N_G(x)\rag}(z_1,z_2)=3$,
or the fact that $\lab x,w_1,w_2,w_3,w_4\rabg\iso W_4$.
\end{proof}

Let now, to the contrary, $W=\lab x,w_1,w_2,w_3,w_4\rabgb\iso W_4$
be an induced subgraph of $\bG$, and let $F\indsub\bGstx$ be such that 
$F\iso\Gt$. Set again $F=\lab t_1,t_2,p_1,p_2,p_3,p_4,t_3,t_4\rabgb$.
Since $\bG$ is $\Gt$-free, at least one edge of $F$ is new. Since
$x\in V_{SI}(\bGstx)$, either all new edges are in one of the triangles 
of $F$, or the only new edge of $F$ is some edge on the path.

\begin{mylist}
 \item[\underline{\bf Case 1:}] {\sl New edges are in a triangle of $F$.} \quad \\
  We can choose the notation such that the new edges are in the triangle 
  $p_1t_1t_2p_1$. As before, if $t_1t_2$ is the only new edge in $F$, then 
  $\lab p_1,t_1,t_2,p_2\rabgb\iso\claw$; hence we can choose the notation 
  such that $t_1p_1$ (and possibly also one of $t_1t_2$, $t_2p_1$) is new.
  By Lemma~\ref{lemma-W5-dist3},  $\dist_{\la\NbG(x)\ragb}(t_1,p_1)=3$.
  By Lemma~\ref{lemma-W4-substructure}, $p_1,t_1$ and $W$ are in the 
  induced subgraph $R$ of Fig.~\ref{fig-W4-substructure}.
  Thus, in all the cases (independently of whether the edges $t_1t_2$, $t_2p_1$
  are present in $\bG$ or not), $\bG$ contains the subgraph $F_0$ shown 
  in Fig.~\ref{fig-W4-subgraph_F_0}.

%
%
\begin{figure}[ht]
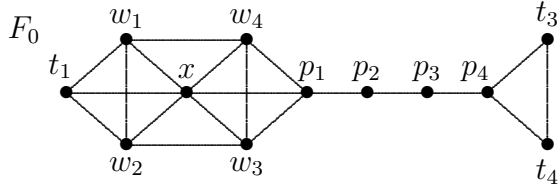

$$\bp
\setcoordinatesystem units <1mm,1mm>
\setplotarea x from -10 to 10, y from -7 to 7
\put{\beginpicture
\setcoordinatesystem units <.8mm,.7mm>
\setplotarea x from -12 to 12, y from -12 to 12
\put{$\bullet$} at     0    0
\put{$x$} at     0  4
\put{$\bullet$} at   -20    0
\put{$t_1$} at    -21   5
\put{$\bullet$} at   -10   10
\put{$w_1$} at    -10  14
\put{$\bullet$} at   -10  -10
\put{$w_2$} at    -10 -14
\put{$\bullet$} at    10  -10
\put{$w_3$} at     10 -14
\put{$\bullet$} at    10   10
\put{$w_4$} at     10  14
\put{$\bullet$} at    20    0
\put{$p_1$} at     21   4
\put{$\bullet$} at    30    0
\put{$p_2$} at     30   4
\put{$\bullet$} at    40    0
\put{$p_3$} at     40   4
\put{$\bullet$} at    50    0
\put{$p_4$} at     48   4
\put{$\bullet$} at    60   10
\put{$t_3$} at     60  15
\put{$\bullet$} at    60  -10
\put{$t_4$} at     60 -15
\plot  -20 0  -10 10  10 10  20 0  10 -10  -10 -10  -20 0 /
\plot  -10 -10  -10 10  10 -10  10 10  -10 -10 /
\plot  -20 0  20 0 /
\plot 20 0   50 0  60 10  60 -10  50 0 /
\put{$F_0$} at    -27   12
\endpicture} at 20  0
\ep$$
\bsm
\caption{The subgraph $F_0$}
\label{fig-W4-subgraph_F_0}
\bsm
\end{figure}

Since e.g. $\lab x,w_3,p_1,p_2,p_3,p_4,t_3,t_4\rab_{F_0}\niso\Gt$, $F_0$ is not
an induced subgraph of $\bG$.
Checking by computer all possible sets of additional edges such that the 
resulting graph is still $\{\claw,\Gt,W_5\}$-free, we obtain 10 exceptional graphs 
$F_1,\ldots,F_{10}$ (see \cite{computing}). 
For each of them, 
$V(F_i)=V(F_0)$, $i=1,\ldots,10$, and their edge sets are:\\
$E(F_1)=E(F_0)\cup\{w_1t_3,w_2t_4,w_3t_4,w_4t_3\}$,\\
$E(F_2)=E(F_0)\cup\{w_1t_3,w_2p_2,w_2p_3,w_3p_2,w_3p_3,w_4t_3\}$,\\
$E(F_3)=E(F_0)\cup\{w_1t_3,w_2p_3,w_2p_4,w_3p_3,w_3p_4,w_4t_3\}$,\\
$E(F_4)=E(F_0)\cup\{w_1p_2,w_1p_3,w_2p_3,w_2p_4,w_3p_3,w_3p_4,w_4p_2,w_4p_3\}$,\\
$E(F_5)=E(F_0)\cup\{w_1p_2,w_1p_3,w_2t_3,w_2t_4,w_3t_3,w_3t_4,w_4p_2,w_4p_3\}$,\\
$E(F_6)=E(F_0)\cup\{w_1p_3,w_1p_4,w_2t_3,w_2t_4,w_3t_3,w_3t_4\car w_4p_3,w_4p_4\}$,\\
$E(F_7)=E(F_0)\cup\{w_1p_4,w_1t_3,w_1t_4,w_2t_4,w_3t_4,w_4p_4,w_4t_3,w_4t_4\}$,\\
$E(F_8)=E(F_0)\cup
\{w_1p_2,w_1p_3,w_2p_4,w_2t_3,w_2t_4,w_3p_4,w_3t_3,w_3t_4,w_4p_2,w_4p_3\}$,\\
$E(F_9)=E(F_0)\cup
\{w_1p_3,w_1p_4,w_2p_4,w_2t_3,w_2t_4,w_3p_4,w_3t_3,w_3t_4,w_4p_3,w_4p_4\}$,\\
$E(F_{10})=E(F_0)\cup
\{w_1p_4,w_1t_3,w_1t_4,w_2t_3,w_2t_4,w_3t_3,w_3t_4,w_4p_4,w_4t_3,w_4t_4\}$.

The graphs $F_1,\ldots,F_{10}$ are also shown in Fig.~\ref{fig-W4-10_exceptions}
(where the double-circled vertices indicate the weakly feasible sets given
by Claim~\ref{claim-W4-3}).

\newcommand{\x}{.453mm}
\newcommand{\y}{.7mm}

%
%
\begin{figure}[ht]
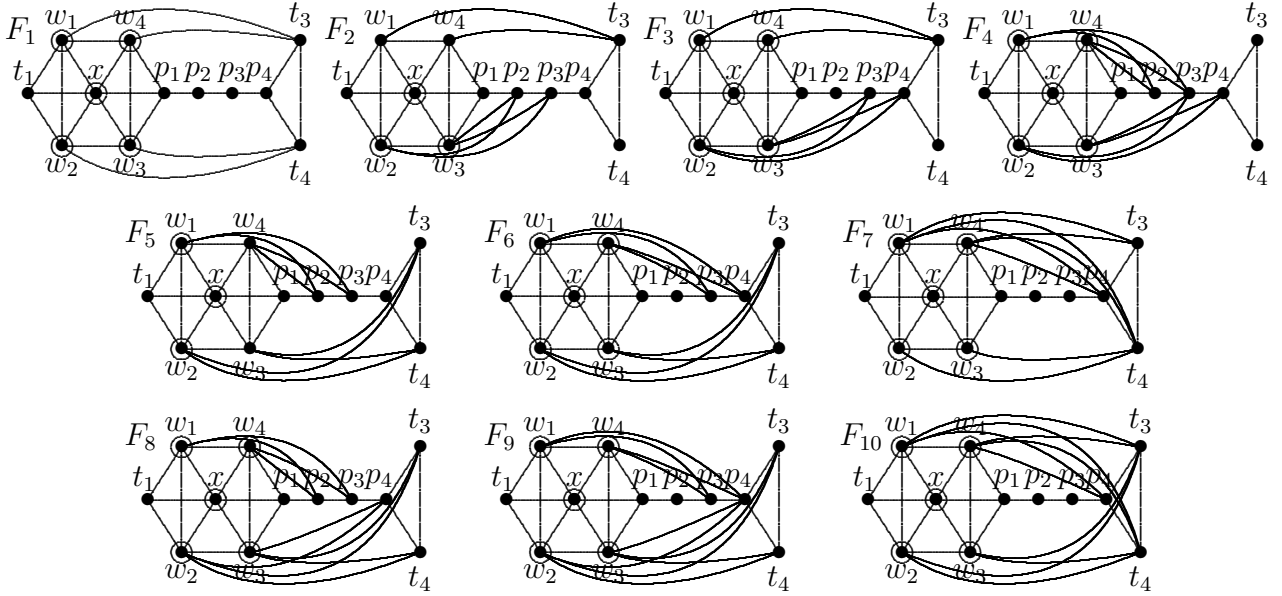

$$\bp
\setcoordinatesystem units <1.06mm,0.9mm>
\setplotarea x from -10 to 10, y from -7 to 7
\put{\beginpicture
\setcoordinatesystem units <\x,\y>
\setplotarea x from -12 to 12, y from -12 to 12
\put{$\bullet$} at     0    0
\circulararc  360  degrees from   0  2 center at  0   0
\put{$x$} at     0  4
\put{$\bullet$} at   -20    0
\put{$t_1$} at    -21   4
\put{$\bullet$} at   -10   10
\circulararc  360  degrees from  -10  12 center at  -10   10
\put{$w_1$} at    -10  14
\put{$\bullet$} at   -10  -10
\circulararc  360  degrees from  -10 -12 center at  -10  -10
\put{$w_2$} at    -10 -14
\put{$\bullet$} at    10  -10
\circulararc  360  degrees from   10 -12 center at   10  -10
\put{$w_3$} at     10 -14
\put{$\bullet$} at    10   10
\circulararc  360  degrees from   10  12 center at   10   10
\put{$w_4$} at     10  14
\put{$\bullet$} at    20    0
\put{$p_1$} at     21   4
\put{$\bullet$} at    30    0
\put{$p_2$} at     30   4
\put{$\bullet$} at    40    0
\put{$p_3$} at     40   4
\put{$\bullet$} at    50    0
\put{$p_4$} at     48   4
\put{$\bullet$} at    60   10
\put{$t_3$} at     60  15
\put{$\bullet$} at    60  -10
\put{$t_4$} at     60 -15
\plot  -20 0  -10 10  10 10  20 0  10 -10  -10 -10  -20 0 /
\plot  -10 -10  -10 10  10 -10  10 10  -10 -10 /
\plot  -20 0  20 0 /
\plot 20 0   50 0  60 10  60 -10  50 0 /
\setquadratic
\plot -10  10   20  16    60  10 /  
\plot  10  10   30  12    60  10 /  
\plot -10 -10   20 -16    60 -10 /  
\plot  10 -10   30 -12    60 -10 /  
\setlinear
\put{$F_1$} at    -22  12
\endpicture} at -60  30
\put{\beginpicture
\setcoordinatesystem units <\x,\y>
\setplotarea x from -12 to 12, y from -12 to 12
\put{$\bullet$} at     0    0
\circulararc  360  degrees from   0  2 center at  0   0
\put{$x$} at     0  4
\put{$\bullet$} at   -20    0
\put{$t_1$} at    -21   4
\put{$\bullet$} at   -10   10
\put{$w_1$} at    -10  14
\put{$\bullet$} at   -10  -10
\circulararc  360  degrees from  -10 -12 center at  -10  -10
\put{$w_2$} at    -10 -14
\put{$\bullet$} at    10  -10
\circulararc  360  degrees from   10 -12 center at   10  -10
\put{$w_3$} at     10 -14
\put{$\bullet$} at    10   10
\put{$w_4$} at     10  14
\put{$\bullet$} at    20    0
\put{$p_1$} at     21   4
\put{$\bullet$} at    30    0
\put{$p_2$} at     30   4
\put{$\bullet$} at    40    0
\put{$p_3$} at     40   4
\put{$\bullet$} at    50    0
\put{$p_4$} at     48   4
\put{$\bullet$} at    60   10
\put{$t_3$} at     60  15
\put{$\bullet$} at    60  -10
\put{$t_4$} at     60 -15
\plot  -20 0  -10 10  10 10  20 0  10 -10  -10 -10  -20 0 /
\plot  -10 -10  -10 10  10 -10  10 10  -10 -10 /
\plot  -20 0  20 0 /
\plot 20 0   50 0  60 10  60 -10  50 0 /
\setquadratic
\plotsymbolspacing=0.1pt
\plot -10  10   20  16    60  10 /  
\plot  10  10   30  12    60  10 /  
\plot -10 -10   15 -10.5  30   0 /  
\plot -10 -10   20 -11    40   0 /  
\plot  10 -10   20  -5    30   0 /  
\plot  10 -10   25  -6    40   0 /  
\setlinear
\put{$F_2$} at    -22  12
\endpicture} at  -20  30
\put{\beginpicture
\setcoordinatesystem units <\x,\y>
\setplotarea x from -12 to 12, y from -12 to 12
\put{$\bullet$} at     0    0
\circulararc  360  degrees from   0  2 center at  0   0
\put{$x$} at     0  4
\put{$\bullet$} at   -20    0
\put{$t_1$} at    -21   4
\put{$\bullet$} at   -10   10
\circulararc  360  degrees from  -10  12 center at  -10   10
\put{$w_1$} at    -10  14
\put{$\bullet$} at   -10  -10
\circulararc  360  degrees from  -10 -12 center at  -10  -10
\put{$w_2$} at    -10 -14
\put{$\bullet$} at    10  -10
\circulararc  360  degrees from   10 -12 center at   10  -10
\put{$w_3$} at     10 -14
\put{$\bullet$} at    10   10
\circulararc  360  degrees from   10  12 center at   10   10
\put{$w_4$} at     10  14
\put{$\bullet$} at    20    0
\put{$p_1$} at     21   4
\put{$\bullet$} at    30    0
\put{$p_2$} at     30   4
\put{$\bullet$} at    40    0
\put{$p_3$} at     40   4
\put{$\bullet$} at    50    0
\put{$p_4$} at     48   4
\put{$\bullet$} at    60   10
\put{$t_3$} at     60  15
\put{$\bullet$} at    60  -10
\put{$t_4$} at     60 -15
\plot  -20 0  -10 10  10 10  20 0  10 -10  -10 -10  -20 0 /
\plot  -10 -10  -10 10  10 -10  10 10  -10 -10 /
\plot  -20 0  20 0 /
\plot 20 0   50 0  60 10  60 -10  50 0 /
\setquadratic
\plotsymbolspacing=0.1pt
\plot -10  10   20  16    60  10 /  
\plot  10  10   30  12    60  10 /  
\plot -10 -10   20 -11    40   0 /  
\plot -10 -10   20 -12    50   0 /  
\plot  10 -10   25  -6    40   0 /  
\plot  10 -10   25  -7    50   0 /  
\setlinear
\put{$F_3$} at    -22  12
\endpicture} at  20  30
\put{\beginpicture
\setcoordinatesystem units <\x,\y>
\setplotarea x from -12 to 12, y from -12 to 12
\put{$\bullet$} at     0    0
\circulararc  360  degrees from   0  2 center at  0   0
\put{$x$} at     0  4
\put{$\bullet$} at   -20    0
\put{$t_1$} at    -21   4
\put{$\bullet$} at   -10   10
\circulararc  360  degrees from  -10  12 center at  -10   10
\put{$w_1$} at    -10  14
\put{$\bullet$} at   -10  -10
\circulararc  360  degrees from  -10 -12 center at  -10  -10
\put{$w_2$} at    -10 -14
\put{$\bullet$} at    10  -10
\circulararc  360  degrees from   10 -12 center at   10  -10
\put{$w_3$} at     10 -14
\put{$\bullet$} at    10   10
\circulararc  360  degrees from   10  12 center at   10   10
\put{$w_4$} at     10  14
\put{$\bullet$} at    20    0
\put{$p_1$} at     21   4
\put{$\bullet$} at    30    0
\put{$p_2$} at     30   4
\put{$\bullet$} at    40    0
\put{$p_3$} at     40   4
\put{$\bullet$} at    50    0
\put{$p_4$} at     48   4
\put{$\bullet$} at    60   10
\put{$t_3$} at     60  15
\put{$\bullet$} at    60  -10
\put{$t_4$} at     60 -15
\plot  -20 0  -10 10  10 10  20 0  10 -10  -10 -10  -20 0 /
\plot  -10 -10  -10 10  10 -10  10 10  -10 -10 /
\plot  -20 0  20 0 /
\plot 20 0   50 0  60 10  60 -10  50 0 /
\setquadratic
\plotsymbolspacing=0.1pt
\plot -10  10   15  10.5  30   0 /  
\plot -10  10   20  11    40   0 /  
\plot  10  10   20   5    30   0 /  
\plot  10  10   25   6    40   0 /  
\plot -10 -10   20 -11    40   0 /  
\plot -10 -10   20 -12    50   0 /  
\plot  10 -10   25  -6    40   0 /  
\plot  10 -10   25  -7    50   0 /  
\setlinear
\put{$F_4$} at    -22  12
\endpicture} at   60 30
\put{\beginpicture
\setcoordinatesystem units <\x,\y>
\setplotarea x from -12 to 12, y from -12 to 12
\put{$\bullet$} at     0    0
\circulararc  360  degrees from   0  2 center at  0   0
\put{$x$} at     0  4
\put{$\bullet$} at   -20    0
\put{$t_1$} at    -21   4
\put{$\bullet$} at   -10   10
\circulararc  360  degrees from  -10  12 center at  -10   10
\put{$w_1$} at    -10  14
\put{$\bullet$} at   -10  -10
\circulararc  360  degrees from  -10 -12 center at  -10  -10
\put{$w_2$} at    -10 -14
\put{$\bullet$} at    10  -10
\put{$w_3$} at     10 -14
\put{$\bullet$} at    10   10
\put{$w_4$} at     10  14
\put{$\bullet$} at    20    0
\put{$p_1$} at     21   4
\put{$\bullet$} at    30    0
\put{$p_2$} at     30   4
\put{$\bullet$} at    40    0
\put{$p_3$} at     40   4
\put{$\bullet$} at    50    0
\put{$p_4$} at     48   4
\put{$\bullet$} at    60   10
\put{$t_3$} at     60  15
\put{$\bullet$} at    60  -10
\put{$t_4$} at     60 -15
\plot  -20 0  -10 10  10 10  20 0  10 -10  -10 -10  -20 0 /
\plot  -10 -10  -10 10  10 -10  10 10  -10 -10 /
\plot  -20 0  20 0 /
\plot 20 0   50 0  60 10  60 -10  50 0 /
\setquadratic
\plotsymbolspacing=0.1pt
\plot -10  10   15  10.5  30   0 /  
\plot -10  10   20  11    40   0 /  
\plot  10  10   20   5    30   0 /  
\plot  10  10   25   6    40   0 /  
\plot -10 -10   34 -12.5  60  10 /  
\plot -10 -10   20 -16    60 -10 /  
\plot  10 -10   40  -8    60  10 /  
\plot  10 -10   30 -12    60 -10 /  
\setlinear
\put{$F_5$} at    -22  12
\endpicture} at   -45  0
\put{\beginpicture
\setcoordinatesystem units <\x,\y>
\setplotarea x from -12 to 12, y from -12 to 12
\put{$\bullet$} at     0    0
\circulararc  360  degrees from   0  2 center at  0   0
\put{$x$} at     0  4
\put{$\bullet$} at   -20    0
\put{$t_1$} at    -21   4
\put{$\bullet$} at   -10   10
\circulararc  360  degrees from  -10  12 center at  -10   10
\put{$w_1$} at    -10  14
\put{$\bullet$} at   -10  -10
\circulararc  360  degrees from  -10 -12 center at  -10  -10
\put{$w_2$} at    -10 -14
\put{$\bullet$} at    10  -10
\circulararc  360  degrees from   10 -12 center at   10  -10
\put{$w_3$} at     10 -14
\put{$\bullet$} at    10   10
\circulararc  360  degrees from   10  12 center at   10   10
\put{$w_4$} at     10  14
\put{$\bullet$} at    20    0
\put{$p_1$} at     21   4
\put{$\bullet$} at    30    0
\put{$p_2$} at     30   4
\put{$\bullet$} at    40    0
\put{$p_3$} at     40   4
\put{$\bullet$} at    50    0
\put{$p_4$} at     48   4
\put{$\bullet$} at    60   10
\put{$t_3$} at     60  15
\put{$\bullet$} at    60  -10
\put{$t_4$} at     60 -15
\plot  -20 0  -10 10  10 10  20 0  10 -10  -10 -10  -20 0 /
\plot  -10 -10  -10 10  10 -10  10 10  -10 -10 /
\plot  -20 0  20 0 /
\plot 20 0   50 0  60 10  60 -10  50 0 /
\setquadratic
\plotsymbolspacing=0.1pt
\plot -10  10   20  11    40   0 /  
\plot -10  10   20  12    50   0 /  
\plot  10  10   25   6    40   0 /  
\plot  10  10   25   7    50   0 /  
\plot -10 -10   34 -12.5  60  10 /  
\plot -10 -10   20 -16    60 -10 /  
\plot  10 -10   40  -8    60  10 /  
\plot  10 -10   30 -12    60 -10 /  
\setlinear
\put{$F_6$} at    -22  12
\endpicture} at    0  0
\put{\beginpicture
\setcoordinatesystem units <\x,\y>
\setplotarea x from -12 to 12, y from -12 to 12
\put{$\bullet$} at     0    0
\put{$\bullet$} at     0    0
\circulararc  360  degrees from   0  2 center at  0   0
\put{$x$} at     0  4
\put{$\bullet$} at   -20    0
\put{$t_1$} at    -21   4
\put{$\bullet$} at   -10   10
\circulararc  360  degrees from  -10  12 center at  -10   10
\put{$w_1$} at    -10  14
\put{$\bullet$} at   -10  -10
\circulararc  360  degrees from  -10 -12 center at  -10  -10
\put{$w_2$} at    -10 -14
\put{$\bullet$} at    10  -10
\circulararc  360  degrees from   10 -12 center at   10  -10
\put{$w_3$} at     10 -14
\put{$\bullet$} at    10   10
\circulararc  360  degrees from   10  12 center at   10   10
\put{$w_4$} at     10  14
\put{$\bullet$} at    20    0
\put{$p_1$} at     21   4
\put{$\bullet$} at    30    0
\put{$p_2$} at     30   4
\put{$\bullet$} at    40    0
\put{$p_3$} at     40   4
\put{$\bullet$} at    50    0
\put{$p_4$} at     48   4
\put{$\bullet$} at    60   10
\put{$t_3$} at     60  15
\put{$\bullet$} at    60  -10
\put{$t_4$} at     60 -15
\plot  -20 0  -10 10  10 10  20 0  10 -10  -10 -10  -20 0 /
\plot  -10 -10  -10 10  10 -10  10 10  -10 -10 /
\plot  -20 0  20 0 /
\plot 20 0   50 0  60 10  60 -10  50 0 /
\setquadratic
\plotsymbolspacing=0.1pt
\plot -10  10   20  12    50   0 /  
\plot -10  10   34  12.5  60 -10 /  
\plot -10  10   20  16    60  10 /  
\plot  10  10   25   7    50   0 /  
\plot  10  10   40   8    60 -10 /  
\plot  10  10   30  12    60  10 /  
\plot -10 -10   20 -16    60 -10 /  
\plot  10 -10   30 -12    60 -10 /  
\setlinear
\put{$F_7$} at    -22  12
\endpicture} at   45   0
\put{\beginpicture
\setcoordinatesystem units <\x,\y>
\setplotarea x from -12 to 12, y from -12 to 12
\put{$\bullet$} at     0    0
\circulararc  360  degrees from   0  2 center at  0   0
\put{$x$} at     0  4
\put{$\bullet$} at   -20    0
\put{$t_1$} at    -21   4
\put{$\bullet$} at   -10   10
\circulararc  360  degrees from  -10  12 center at  -10   10
\put{$w_1$} at    -10  14
\put{$\bullet$} at   -10  -10
\circulararc  360  degrees from  -10 -12 center at  -10  -10
\put{$w_2$} at    -10 -14
\put{$\bullet$} at    10  -10
\circulararc  360  degrees from   10 -12 center at   10  -10
\put{$w_3$} at     10 -14
\put{$\bullet$} at    10   10
\circulararc  360  degrees from   10  12 center at   10   10
\put{$w_4$} at     10  14
\put{$\bullet$} at    20    0
\put{$p_1$} at     21   4
\put{$\bullet$} at    30    0
\put{$p_2$} at     30   4
\put{$\bullet$} at    40    0
\put{$p_3$} at     40   4
\put{$\bullet$} at    50    0
\put{$p_4$} at     48   4
\put{$\bullet$} at    60   10
\put{$t_3$} at     60  15
\put{$\bullet$} at    60  -10
\put{$t_4$} at     60 -15
\plot  -20 0  -10 10  10 10  20 0  10 -10  -10 -10  -20 0 /
\plot  -10 -10  -10 10  10 -10  10 10  -10 -10 /
\plot  -20 0  20 0 /
\plot 20 0   50 0  60 10  60 -10  50 0 /
\setquadratic
\plotsymbolspacing=0.1pt
\plot -10  10   15  10.5  30   0 /  
\plot -10  10   20  11    40   0 /  
\plot  10  10   20   5    30   0 /  
\plot  10  10   25   6    40   0 /  
\plot -10 -10   20 -12    50   0 /  
\plot -10 -10   34 -12.5  60  10 /  
\plot -10 -10   20 -16    60 -10 /  
\plot  10 -10   25  -7    50   0 /  
\plot  10 -10   40  -8    60  10 /  
\plot  10 -10   30 -12    60 -10 /  
\setlinear
\put{$F_8$} at    -22  12
\endpicture} at   -45  -30
\put{\beginpicture
\setcoordinatesystem units <\x,\y>
\setplotarea x from -12 to 12, y from -12 to 12
\put{$\bullet$} at     0    0
\put{$\bullet$} at     0    0
\circulararc  360  degrees from   0  2 center at  0   0
\put{$x$} at     0  4
\put{$\bullet$} at   -20    0
\put{$t_1$} at    -21   4
\put{$\bullet$} at   -10   10
\circulararc  360  degrees from  -10  12 center at  -10   10
\put{$w_1$} at    -10  14
\put{$\bullet$} at   -10  -10
\circulararc  360  degrees from  -10 -12 center at  -10  -10
\put{$w_2$} at    -10 -14
\put{$\bullet$} at    10  -10
\circulararc  360  degrees from   10 -12 center at   10  -10
\put{$w_3$} at     10 -14
\put{$\bullet$} at    10   10
\circulararc  360  degrees from   10  12 center at   10   10
\put{$w_4$} at     10  14
\put{$\bullet$} at    20    0
\put{$p_1$} at     21   4
\put{$\bullet$} at    30    0
\put{$p_2$} at     30   4
\put{$\bullet$} at    40    0
\put{$p_3$} at     40   4
\put{$\bullet$} at    50    0
\put{$p_4$} at     48   4
\put{$\bullet$} at    60   10
\put{$t_3$} at     60  15
\put{$\bullet$} at    60  -10
\put{$t_4$} at     60 -15
\plot  -20 0  -10 10  10 10  20 0  10 -10  -10 -10  -20 0 /
\plot  -10 -10  -10 10  10 -10  10 10  -10 -10 /
\plot  -20 0  20 0 /
\plot 20 0   50 0  60 10  60 -10  50 0 /
\setquadratic
\plotsymbolspacing=0.1pt
\plot -10  10   20  11    40   0 /  
\plot -10  10   20  12    50   0 /  
\plot  10  10   25   6    40   0 /  
\plot  10  10   25   7    50   0 /  
\plot -10 -10   20 -12    50   0 /  
\plot -10 -10   34 -12.5  60  10 /  
\plot -10 -10   20 -16    60 -10 /  
\plot  10 -10   25  -7    50   0 /  
\plot  10 -10   40  -8    60  10 /  
\plot  10 -10   30 -12    60 -10 /  
\setlinear
\put{$F_9$} at    -22  12
\endpicture} at    0  -30
\put{\beginpicture
\setcoordinatesystem units <\x,\y>
\setplotarea x from -12 to 12, y from -12 to 12
\put{$\bullet$} at     0    0
\put{$\bullet$} at     0    0
\circulararc  360  degrees from   0  2 center at  0   0
\put{$x$} at     0  4
\put{$\bullet$} at   -20    0
\put{$t_1$} at    -21   4
\put{$\bullet$} at   -10   10
\circulararc  360  degrees from  -10  12 center at  -10   10
\put{$w_1$} at    -10  14
\put{$\bullet$} at   -10  -10
\circulararc  360  degrees from  -10 -12 center at  -10  -10
\put{$w_2$} at    -10 -14
\put{$\bullet$} at    10  -10
\circulararc  360  degrees from   10 -12 center at   10  -10
\put{$w_3$} at     10 -14
\put{$\bullet$} at    10   10
\circulararc  360  degrees from   10  12 center at   10   10
\put{$w_4$} at     10  14
\put{$\bullet$} at    20    0
\put{$p_1$} at     21   4
\put{$\bullet$} at    30    0
\put{$p_2$} at     30   4
\put{$\bullet$} at    40    0
\put{$p_3$} at     40   4
\put{$\bullet$} at    50    0
\put{$p_4$} at     48   4
\put{$\bullet$} at    60   10
\put{$t_3$} at     60  15
\put{$\bullet$} at    60  -10
\put{$t_4$} at     60 -15
\plot  -20 0  -10 10  10 10  20 0  10 -10  -10 -10  -20 0 /
\plot  -10 -10  -10 10  10 -10  10 10  -10 -10 /
\plot  -20 0  20 0 /
\plot 20 0   50 0  60 10  60 -10  50 0 /
\setquadratic
\plotsymbolspacing=0.1pt
\plot -10  10   20  12    50   0 /  
\plot -10  10   35  12.5  60 -10 /  
\plot -10  10   20  16    60  10 /  
\plot  10  10   25   7    50   0 /  
\plot  10  10   41   8    60 -10 /  
\plot  10  10   30  12    60  10 /  
\plot -10 -10   35 -12.5  60  10 /  
\plot -10 -10   20 -16    60 -10 /  
\plot  10 -10   41  -8    60  10 /  
\plot  10 -10   30 -12    60 -10 /  
\setlinear
\put{$F_{10}$} at    -22  12
\endpicture} at   45  -30
\ep$$
\bsm
\caption{The 10 exceptional graphs}
\label{fig-W4-10_exceptions}
\end{figure}

\vspace*{-1mm}
  
  Each of the graphs $F_1,\ldots,F_{10}$ is $\{\claw,\Gt\}$-free, we have
  $F_i\indsub\bG$ for some $i\in\{1,\ldots,10\}$, however, in $(F_i)^{^*}_x$ 
  we have $\lab x,t_1,p_1,p_2,p_3,p_4,t_3,t_4\rab_{(F_i)^{^*}_x}\iso\Gt$.
  By the definition of the $\Gt$-closure, in each of these possibilities 
  not only $(F_i)^{^*}_x$, but even $\Gst_M$ for any (weakly) feasible set 
  $M\subset V(\bG)$, contains an induced $\Gt$. To reach a contradiction, 
  in each of the subgraphs $F_1,\ldots,F_{10}$, we identify a weakly feasible set 
  $M\subset V(\bG)$ with $|M|>1$ and $x\in M$, for which this is not possible.

  First observe that, in all the graphs $F_1,\ldots,F_{10}$, $x$ is locally 
  is 2-connected in $\bG$, hence $x$ is feasible in $\bG$ by 
  Theorem~\ref{thmA-HC-2-conn_neighb}. Set $G_1=(\bG)^{^*}_x$.

\vspace*{-1mm}

  %
  %
  \setcounter{prostrclaim}{0}
  \begin{claim}
  \label{claim-W4-1}
  Let $v\in\{w_1,w_2,w_3,w_4\}$, and let $S$ be the graph in 
  Fig.~\ref{fig-graph_S}. Then either $v$ is locally 2-connected in $G_1$, or 
  $v$ is not a vertex of degree~4 in an induced subgraph $F\indsub G_1$ such 
  that $F\iso S$.
  \end{claim}
    
  \bsm
     
  \begin{proofcl}
  Let, say, $v=w_1$. Then $w_1$ and $w_4$ have, in all the graphs 
  $F_1,\ldots,F_{10}$, a common neighbor $u\in\{p_3,p_4,t_3,t_4\}$. Since $\bG$ 
  is 3-connected and not Hamilton-connected, by Theorem~\ref{thmA-Chvatal-Erdos},
  $\alpha(\bG)\geq 3$. By Proposition~\ref{prop-closure-W5}, $\bG$ is $W_5$-free,
  thus, by Theorem~\ref{thmA-Fouquet}, $\NbG(v)$, hence also $\NbG[v]$, can be
  covered by two cliques, say, $K_1$ and $K_2$. 
  Choose the notation such that $K_1$ contains the triangle $w_1w_4t_1$, and 
  $K_2$ contains the triangle $w_1w_4u$. Then, in $G_1$, $K_1$ extends to 
  a clique $K_1'$ containing the vertices $w_1,w_2,w_3,w_4,t_1,x,p_1$. Thus, 
  $\{w_1,w_4\}\subset K_1'\cap K_2$. 
  
  If $|K_1'\cap K_2|\geq 3$, then $w_1$ is locally 2-connected in $G_1$ and we 
  are done by Theorem~\ref{thmA-HC-2-conn_neighb}; thus, let 
  $K_1'\cap K_2=\{w_1,w_4\}$.
  Then, if $w_1$ is a vertex of degree~4 in $F\indsub G_1$ with $F\iso S$, there 
  are vertices $z_i\in K_i\sm\{w_1,w_4\}$, $i=1,2$, such that $z_1z_2\in E(G_1)$,
  but then $w_1$ is locally 2-connected in $G_1$.
  
  The proof for $v\in\{w_2,w_3,w_4\}$ is symmetric (since the argument in our proof 
  never used the vertex $p_2$).
  \end{proofcl}

\bsm\bsm

  %
  %
  \begin{claim}
  \label{claim-W4-2}
  At least one of the vertices $w_1,w_2,w_3,w_4$ is feasible in $G_1$.
  \end{claim}
    
  \bsm\bsm
     
  \begin{proofcl}
  If, say, $w_1$ is not feasible in $G_1$, then, by Claim~\ref{claim-W4-1} and by
  Proposition~\ref{propA-subgraph_S}$(ii)$, $(G_1)^{^*}_{w_1}$ has a hamiltonian
  $(w_1,w)$-path for some $w\in N_{G_1}(w_1)$. Hence there is no hamiltonian
  $(w_1,w)$-path in $G_1$, and, by Proposition~\ref{propA-subgraph_S}$(ii)$,
  there is still no hamiltonian $(w_1,w)$-path in $(G_1)^{^*}_{w'}$ for any 
  $w'\in\{w_2,w_3,w_4\}\sm\{w\}$. Thus, $w'$ is feasible in $G_1$.
  \end{proofcl}

\bsm\bsm

  %
  %
  \begin{claim}
  \label{claim-W4-3}
  Let $F_i\indsub\bG$ for some $i$, $1\leq i\leq 10$, and set 
  $M_i=\{x,w_1,w_2,w_3,w_4\}$ if $i\in\{1,3,4,6,7,8,9,10\}$, or
  $M_2=\{x,w_2,w_3\}$, or $M_5=\{x,w_1,w_4\}$.
  Then the set $M_i$ is weakly feasible in $\bG$.
  \end{claim}

\bsm

  \begin{proofcl}
  Throughout this proof, we will use $\VLDC(G)$ to denote the set of all
  locally 2-connected vertices in a graph $G$.

  First of all, observe that $x\in\VLDC(\bG)$ in all cases
  $i=1,\ldots,10$ by Lemma~\ref{lemmaA-2_indep_sets} (the independent sets are 
  $\{w_1,w_3\}$ and $\{w_2,w_4\}$), hence $x$ is feasible in $\bG$ by 
  Theorem~\ref{thmA-HC-2-conn_neighb}, and feasible or simplicial in any graph 
  obtained from $\bG$ by a series of local completions.
 
  Let $F_j\indsub\bG$, and let, by Claim~\ref{claim-W4-2},  
  $v_j\in\{w_1,w_2,w_3,w_4\}\cap\VLDC(G_1)$.
  
  \begin{mylist}
    \item[\underline{Case Cl~\ref{claim-W4-3}-1:}] {\sl $F_1\indsub\bG$.} \quad \\
    By symmetry, we can assume that $v_1=w_4$. 
    Then, by Lemma~\ref{lemmaA-2_indep_sets}, $w_3\in\VLDC(\bGst_{w_4})$ (the
    independent sets are $\{w_2,w_4\}$ and $\{w_1,t_4\}$), and also 
    $w_1\in\VLDC(\bGst_{w_4})$ (the independent sets are $\{w_2,w_4\}$ and 
    $\{t_1,t_3\}$). By symmetry, $w_2\in\VLDC(\bGst_{w_1})$.
    Hence also $w_3\in\VLDC((\bGst_{w_4})^{^*}_x)=\VLDC((\bGst_{x})^{^*}_{w_4})$,
    i.e., $w_3$ is feasible or simplicial in $(\bGst_{x})^{^*}_{w_4}$.
    Similarly with $w_1$ and $w_2$. 

    Thus, if $F_1\indsub\bG$, then the set  $M_1=\{x,w_1,w_2,w_3,w_4\}$
    is weakly feasible in $\bG$.
    
    We summarize the above discussion in the following table (where ``L2C"
    stands for ``locally 2-connected").
    
   \begin{tabular}{|c|l|l|}
   \hline
   $v_1$  &  L2C vertex & Argument \\
   \hline
   $w_4$  & $w_3\in\VLDC(\bGst_{w_4})$ & Lemma~\ref{lemmaA-2_indep_sets},
                                  indep. sets $\{w_2,w_4\}$, $\{x,t_4\}$ \\
          & $w_1\in\VLDC(\bGst_{w_4})$ & Lemma~\ref{lemmaA-2_indep_sets},
                                  indep. sets $\{w_2,w_4\}$, $\{t_1,t_3\}$ \\      
          & $w_2\in\VLDC(\bGst_{w_1})$ & Symmetric to 
                                  $w_3\in\VLDC(\bGst_{w_4})$ \\
   \hline
   \end{tabular}

   \item[\underline{Case Cl~\ref{claim-W4-3}-2:}] {\sl $F_2\indsub\bG$.} \quad \\
   Immediately $w_3\in\VLDC(\bG)$ (independent sets $\{w_2,w_4\}$ and 
   $\{x,p_3\}$), and then $w_2\in\VLDC(\bGst_{w_3})$ (independent sets 
   $\{w_1,w_3\}$ and $\{t_1,p_3\})$. 

   Thus, if $F_2\indsub\bG$, then the set  $M_2=\{x,w_2,w_3\}$
   is weakly feasible in $\bG$.

   \item[\underline{Case Cl~\ref{claim-W4-3}-3:}] {\sl $F_3\indsub\bG$.} \quad \\
   By symmetry, we can assume that $v_3\in\{w_3,w_4\}$. 
   We summarize the possibilities in the following table.

   \begin{tabular}{|c|l|l|}
   \hline
   $v_3$  &  L2C vertex & Argument \\
   \hline
   $w_4$  & $w_3\in\VLDC(\bGst_{w_4})$ & Lemma~\ref{lemmaA-2_indep_sets},
                                  indep. sets $\{w_2,w_4\}$, $\{x,p_4\}$ \\
          & $w_1\in\VLDC(\bGst_{w_4})$ & Lemma~\ref{lemmaA-2_indep_sets},
                                  indep. sets $\{w_2,w_4\}$, $\{t_1,t_3\}$ \\      
          & $w_2\in\VLDC(\bGst_{w_1})$ & Symmetric to 
                                  $w_3\in\VLDC(\bGst_{w_4})$ \\
   \hline          
   $w_3$  & $w_4\in\VLDC(\bGst_{w_3})$ & Lemma~\ref{lemmaA-2_indep_sets},
                                  indep. sets $\{w_1,w_3\}$, $\{x,t_3\}$ \\
          & $w_2\in\VLDC(\bGst_{w_3})$ & Lemma~\ref{lemmaA-2_indep_sets},
                                  indep. sets $\{w_1,w_3\}$, $\{t_1,p_4\}$ \\      
          & $w_1\in\VLDC(\bGst_{w_2})$ & Symmetric to $w_4\in\VLDC(\bGst_{w_3})$ \\          
   \hline
   \end{tabular}

    Thus, if $F_3\indsub\bG$, then the set  $M_3=\{x,w_1,w_2,w_3,w_4\}$
    is weakly feasible in $\bG$.

   \item[\underline{Case Cl~\ref{claim-W4-3}-4:}] {\sl $F_4\indsub\bG$.} \quad \\
   In this case, already $w_i\in\VLDC(\bG)$, $i=1,2,3,4$:

   \begin{tabular}{|l|l|}
   \hline
   L2C vertex & Argument \\
   \hline
   $w_1\in\VLDC(\bG)$ & Lemma~\ref{lemmaA-2_indep_sets},
                                  indep. sets $\{w_2,w_4\}$, $\{x,p_3\}$ \\
   $w_2\in\VLDC(\bG)$ & Lemma~\ref{lemmaA-2_indep_sets},
                                  indep. sets $\{w_1,w_3\}$, $\{x,p_3\}$ \\      
   $w_3\in\VLDC(\bG)$ & Lemma~\ref{lemmaA-2_indep_sets},
                                  indep. sets $\{w_2,w_4\}$, $\{x,p_3\}$ \\
   $w_4\in\VLDC(\bG)$ & Lemma~\ref{lemmaA-2_indep_sets},
                                  indep. sets $\{w_1,w_3\}$, $\{x,p_3\}$ \\      
   \hline
   \end{tabular}

   Thus, if $F_4\indsub\bG$, then the set  $M_4=\{x,w_1,w_2,w_3,w_4\}$
   is weakly feasible in $\bG$.

   \item[\underline{Case Cl~\ref{claim-W4-3}-5:}] {\sl $F_5\indsub\bG$.} \quad \\
   In this case, $w_4\in\VLDC(\bG)$ (Lemma~\ref{lemmaA-2_indep_sets} with sets 
   $\{w_1,w_3\}$, $\{x,p_3\}$), and then 
   $w_1\in\VLDC(\bGst_{w_4})$ (Lemma~\ref{lemmaA-2_indep_sets} with sets 
   $\{w_2,w_4\}$, $\{t_1,p_3\}$).   

   Thus, if $F_5\indsub\bG$, then the set  $M_5=\{x,w_1,w_4\}$ is weakly 
   feasible in $\bG$.

   \item[\underline{Case Cl~\ref{claim-W4-3}-6:}] {\sl $F_6\indsub\bG$.} \quad \\
   By symmetry, we can assume that $v_6\in\{w_3,w_4\}$. We then have the following 
   possibilities.
   
   \begin{tabular}{|c|l|l|}
   \hline
   $v_6$  &  L2C vertex & Argument \\
   \hline
   $w_3$  & $w_2\in\VLDC(\bGst_{w_3})$ & Lemma~\ref{lemmaA-2_indep_sets},
                                  indep. sets $\{w_1,w_3\}$, $\{t_1,t_3\}$ \\
          & $w_4\in\VLDC(\bGst_{w_3})$ & Lemma~\ref{lemmaA-2_indep_sets},
                                  indep. sets $\{w_1,w_3\}$, $\{x,p_3\}$ \\      
          & $w_1\in\VLDC(\bGst_{w_2})$ & Symmetric to 
                                  $w_4\in\VLDC(\bGst_{w_3})$ \\
   \hline          
   $w_4$  & $w_1\in\VLDC(\bGst_{w_4})$ & Lemma~\ref{lemmaA-2_indep_sets},
                                  indep. sets $\{w_2,w_4\}$, $\{t_1,p_4\}$ \\
          & $w_3\in\VLDC(\bGst_{w_4})$ & Lemma~\ref{lemmaA-2_indep_sets},
                                  indep. sets $\{w_2,w_4\}$, $\{x,t_3\}$ \\      
          & $w_2\in\VLDC(\bGst_{w_1})$ & Symmetric to $w_3\in\VLDC(\bGst_{w_4})$ \\          
   \hline
   \end{tabular}

   Thus, if $F_6\indsub\bG$, then the set  $M_6=\{x,w_1,w_2,w_3,w_4\}$
   is weakly feasible in $\bG$.

   \item[\underline{Case Cl~\ref{claim-W4-3}-7:}] {\sl $F_7\indsub\bG$.} \quad \\
   By symmetry, it is sufficient to verify $w_3$ and $w_4$, however, 
   $w_3,w_4\in\VLDC(\bG)$ by Lemma~\ref{lemmaA-2_indep_sets}: $w_3$ with sets 
   $\{w_2,w_4\}$, $\{x,t_4\}$, $w_4$ with sets $\{w_1,w_3\}$, $\{x,p_4\}$.

   Thus, if $F_7\indsub\bG$, then the set  $M_7=\{x,w_1,w_2,w_3,w_4\}$
   is weakly feasible in $\bG$.

   \item[\underline{Case Cl~\ref{claim-W4-3}-8:}] {\sl $F_8\indsub\bG$.} \quad \\
   Then $w_4\in\VLDC(\bG)$ (Lemma~\ref{lemmaA-2_indep_sets} with sets 
   $\{w_2,w_4\}$, $\{x,p_3\}$), and for the remaining vertices we have the 
   following possibilities.

   \begin{tabular}{|l|l|}
   \hline
   L2C vertex & Argument \\
   \hline
   $w_1\in\VLDC(\bGst_{w_4})$ & Lemma~\ref{lemmaA-2_indep_sets},
                                  indep. sets $\{w_2,w_4\}$, $\{t_1,p_3\}$ \\
   $w_3\in\VLDC(\bGst_{w_4})$ & Lemma~\ref{lemmaA-2_indep_sets},
                                  indep. sets $\{w_2,w_4\}$, $\{x,t_3\}$ \\      
   $w_2\in\VLDC(\bGst_{w_1})$ & Symmetric to $w_3\in\VLDC(\bGst_{w_4})$  \\
   \hline
   \end{tabular}

   Thus, if $F_8\indsub\bG$, then the set  $M_8=\{x,w_1,w_2,w_3,w_4\}$
   is weakly feasible in $\bG$.
 
   \item[\underline{Case Cl~\ref{claim-W4-3}-9:}] {\sl $F_9\indsub\bG$.} \quad \\
   In this case, $\{w_1,w_2,w_3,w_4\}\subset\VLDC(\bG)$: $w_1$ and $w_2$ by 
   Lemma~\ref{lemmaA-2_indep_sets} with sets $\{w_2,w_4\}$, $\{t_1,p_3\}$) 
   for $w_1$, and $\{w_1,w_3\}$, $\{x,p_4\}$) for $w_2$; $w_3$ and $w_4$ follow
   by symmetry.

   Thus, if $F_9\indsub\bG$, then the set  $M_9=\{x,w_1,w_2,w_3,w_4\}$
   is weakly feasible in $\bG$.

   \item[\underline{Case Cl~\ref{claim-W4-3}-10:}] {\sl $F_{10}\indsub\bG$.} \quad \\
   In this case similarly $\{w_1,w_2,w_3,w_4\}\subset\VLDC(\bG)$: $w_1$ and $w_2$ 
   by Lemma~\ref{lemmaA-2_indep_sets} with sets $\{w_2,w_4\}$, $\{x,p_4\}$) 
   for $w_1$, and $\{w_1,w_3\}$, $\{x,t_3\}$) for $w_2$; $w_3$ and $w_4$ follow
   by symmetry.

   Thus, if $F_{10}\indsub\bG$, then the set  $M_{10}=\{x,w_1,w_2,w_3,w_4\}$
   is weakly feasible in $\bG$.
  \end{mylist}
  \vspace*{-9mm}
  \end{proofcl}

\bsm

  Now, since each of the sets $M_i$ is weakly feasible in $\bG$, by the 
  definition of the $\Gt$-closure, each $\bGst_{M_i}$ contains an induced 
  subgraph $F'\iso\Gt$, $i=1,\ldots,10$. Since each $N_{\bG}[M_i]$ induces in 
  $\bGst_{M_i}$ a clique, the clique $\la N_{\bG}[M_i]\ra_{\bGst_{M_i}}$
  contains either a triangle of $F'$, or one of the edges of the path of $F'$.
  The rest of $F'$ outside $\la N_{\bG}[M_i]\ra_{\bGst_{M_i}}$ consists of 
  a triangle and a path of appropriate length in the first case, or of two
  triangles with a path of appropriate length in the second case.
  Let us call these parts of $F'$ outside $\la N_{\bG}[M_i]\ra_{\bGst_{M_i}}$
  ''tails".
  Then each tail consists of a triangle with a path, and the length of the path 
  is 3 in the first case, or the lengths of the paths sum up to 2 in the second 
  case. Moreover, since each vertex of $M_i$ is simplicial in $\bGst_{M_i}$, it 
  cannot be an end of a tail directly, but a tail can be attached to some its 
  neighbor, which corresponds to the situation with a tail one longer.

  Considering all possible combinations of such tails, we obtain the possibilities 
  that are listed in the following table.

  \ssk

  \quad \begin{tabular}{|lll|}
  \hline
  $$\beginpicture
  \put {\bp
  \setcoordinatesystem units <0.6mm,0.6mm>
    \put{$\bullet$} at  -25  -5
    \put{$\bullet$} at  -25   5
    \put{$\bullet$} at  -15   0
    \plot -15 0  -25 -5  -25 5 -15 0 /
    \ep} at 0 0
  \endpicture$$ 
  &
  $+$
  &
  $$\beginpicture
  \put {\bp
  \setcoordinatesystem units <0.6mm,0.6mm>
    \put{$\bullet$} at  -25  -5
    \put{$\bullet$} at  -25   5
    \put{$\bullet$} at  -15   0
    \put{$\bullet$} at   -5   0
    \put{$\bullet$} at    5   0
    \plot -15 0  -25 -5  -25 5 -15 0  5 0  /
  \ep} at 0 0
  \endpicture$$  \\
  \hline
  $$\beginpicture
  \put {\bp
  \setcoordinatesystem units <0.6mm,0.6mm>
    \put{$\bullet$} at  -25  -5
    \put{$\bullet$} at  -25   5
    \put{$\bullet$} at  -15   0
    \plot -15 0  -25 -5  -25 5 -15 0   /
  \ep} at 0 0
  \endpicture$$ 
  &
  $+$
  &
  $$\beginpicture
  \put {\bp
  \setcoordinatesystem units <0.6mm,0.6mm>
    \put{$\bullet$} at  -25  -5
    \put{$\bullet$} at  -25   5
    \put{$\bullet$} at  -15   0
    \put{$\bullet$} at   -5   0
    \put{$\bullet$} at    5   0
    \put{$\bullet$} at   15   0
    \circulararc  360  degrees from  17.3  0 center at  15   0
    \plot -15 0  -25 -5  -25 5 -15 0  15 0  /
  \ep} at 0 0
  \endpicture$$  \\
  \hline
  $$\beginpicture
  \put {\bp
  \setcoordinatesystem units <0.6mm,0.6mm>
    \put{$\bullet$} at  -25  -5
    \put{$\bullet$} at  -25   5
    \put{$\bullet$} at  -15   0
    \put{$\bullet$} at   -5   0
    \plot -15 0  -25 -5  -25 5 -15 0  -5 0  /
  \ep} at 0 0
  \endpicture$$ 
  &
  $+$
  &
  $$\beginpicture
  \put {\bp
  \setcoordinatesystem units <0.6mm,0.6mm>
    \put{$\bullet$} at  -25  -5
    \put{$\bullet$} at  -25   5
    \put{$\bullet$} at  -15   0
    \put{$\bullet$} at   -5   0
    \plot -15 0  -25 -5  -25 5 -15 0  -5 0  /
  \ep} at 0 0
  \endpicture$$  \\
  \hline
  $$\beginpicture
  \put {\bp
  \setcoordinatesystem units <0.6mm,0.6mm>
    \put{$\bullet$} at  -25  -5
    \put{$\bullet$} at  -25   5
    \put{$\bullet$} at  -15   0
    \put{$\bullet$} at   -5   0
    \plot -15 0  -25 -5  -25 5 -15 0  -5 0  /
  \ep} at 0 0
  \endpicture$$ 
  &
  $+$
  &
  $$\beginpicture
  \put {\bp
  \setcoordinatesystem units <0.6mm,0.6mm>
    \put{$\bullet$} at  -25  -5
    \put{$\bullet$} at  -25   5
    \put{$\bullet$} at  -15   0
    \put{$\bullet$} at   -5   0
    \put{$\bullet$} at    5   0
    \circulararc  360  degrees from   7.3  0 center at   5   0
    \plot -15 0  -25 -5  -25 5 -15 0  5 0  /
  \ep} at 0 0
  \endpicture$$  \\
  \hline
  $$\beginpicture
  \put {\bp
  \setcoordinatesystem units <0.6mm,0.6mm>
    \put{$\bullet$} at  -25  -5
    \put{$\bullet$} at  -25   5
    \put{$\bullet$} at  -15   0
    \put{$\bullet$} at   -5   0
    \circulararc  360  degrees from  -7.3  0 center at  -5   0
    \plot -15 0  -25 -5  -25 5 -15 0  -5 0  /
  \ep} at 0 0
  \endpicture$$ 
  &
  $+$
  &
  $$\beginpicture
  \put {\bp
  \setcoordinatesystem units <0.6mm,0.6mm>
    \put{$\bullet$} at  -25  -5
    \put{$\bullet$} at  -25   5
    \put{$\bullet$} at  -15   0
    \put{$\bullet$} at   -5   0
    \put{$\bullet$} at   5   0
    \plot -15 0  -25 -5  -25 5  -15 0  5 0  /
  \ep} at 0 0
  \endpicture$$  \\
  \hline
  $$\beginpicture
  \put {\bp
  \setcoordinatesystem units <0.6mm,0.6mm>
    \put{$\bullet$} at  -25  -5
    \put{$\bullet$} at  -25   5
    \put{$\bullet$} at  -15   0
    \put{$\bullet$} at   -5   0
    \circulararc  360  degrees from  -7.3  0 center at  -5   0
    \plot -15 0  -25 -5  -25 5 -15 0  -5 0  /
  \ep} at 0 0
  \endpicture$$ 
  &
  $+$
  &
  $$\beginpicture
  \put {\bp
  \setcoordinatesystem units <0.6mm,0.6mm>
    \put{$\bullet$} at  -25  -5
    \put{$\bullet$} at  -25   5
    \put{$\bullet$} at  -15   0
    \put{$\bullet$} at   -5   0
    \put{$\bullet$} at    5   0
    \put{$\bullet$} at   15   0
    \circulararc  360  degrees from  17.3  0 center at  15   0
    \plot -15 0  -25 -5  -25 5 -15 0  15 0  /
  \ep} at 0 0
  \endpicture$$  \\
  \hline
  $$\beginpicture
  \put {\bp
  \setcoordinatesystem units <0.6mm,0.6mm>
    \put{$\bullet$} at  -25  -5
    \put{$\bullet$} at  -25   5
    \put{$\bullet$} at  -15   0
    \put{$\bullet$} at   -5   0
    \put{$\bullet$} at    5   0
    \circulararc  360  degrees from   7.3  0 center at   5   0
    \plot -15 0  -25 -5  -25 5 -15 0  5 0  /
  \ep} at 0 0
  \endpicture$$ 
  &
  $+$
  &
  $$\beginpicture
  \put {\bp
  \setcoordinatesystem units <0.6mm,0.6mm>
    \put{$\bullet$} at  -25  -5
    \put{$\bullet$} at  -25   5
    \put{$\bullet$} at  -15   0
    \put{$\bullet$} at   -5   0
    \put{$\bullet$} at    5   0
    \circulararc  360  degrees from   7.3  0 center at   5   0
    \plot -15 0  -25 -5  -25 5 -15 0  5 0  /
  \ep} at 0 0
  \endpicture$$  \\
  \hline
  $$\beginpicture
  \put {\bp
  \setcoordinatesystem units <0.6mm,0.6mm>
    \put{$\bullet$} at  -25  -5
    \put{$\bullet$} at  -25   5
    \put{$\bullet$} at  -15   0
    \put{$\bullet$} at   -5   0
    \put{$\bullet$} at    5   0
    \put{$\bullet$} at   15   0
    \plot -15 0  -25 -5  -25 5 -15 0  15 0  /
  \ep} at 0 0
  \endpicture$$ 
  &

  &
  \\
  \hline
  $$\beginpicture
  \put {\bp
  \setcoordinatesystem units <0.6mm,0.6mm>
    \put{$\bullet$} at  -25  -5
    \put{$\bullet$} at  -25   5
    \put{$\bullet$} at  -15   0
    \put{$\bullet$} at   -5   0
    \put{$\bullet$} at    5   0
    \put{$\bullet$} at   15   0
    \put{$\bullet$} at   25   0
    \circulararc  360  degrees from  27.3  0 center at  25   0
    \plot -15 0  -25 -5  -25 5 -15 0  25 0  /
  \ep} at 0 0
  \endpicture$$ 
  &

  &
   \\
  \hline
  \end{tabular}

  \ssk

  Here, the double-circled endvertices 
  of tails can be identified with a vertex in $M_i$, and the non-double-circled 
  endvertices of tails can be identified with a vertex in $N_{\bG}[M_i]\sm M_i$.

  We have generated by computer all possible ways of extending the graph $F_i$,
  $i=1,\ldots,10$, by joining some of the combinations of the tails in the table
  to vertices in $N_{\bG}[M_i]$.
  To the resulting graphs, we have added all possible sets of edges between the 
  new vertices and $F_i$. Finally, we have tested whether the resulting 
  graphs are $\{\Gt,W_5,\claw\}$-free. In each of the possible cases, we have 
  obtained a contradiction (see \cite{computing}).

  \item[\underline{\bf Case 2:}] {\sl The new edge is on the path of $F$.} \quad \\
  If $|V(W)\cap V(F)|=2$, then, choosing the notation such that 
  $w_1,w_3\in V(F)$, Lemma~\ref{lemma-W5-new-in-path} implies $w_2w_4\in E(G)$,
  a contradiction. Hence $|V(W)\cap V(F)|\leq 1$.
  
  \begin{mylist}

    \item[\underline{\bf Subcase 2.1:}] {\sl $|V(W)\cap V(F)|=1$.} \quad \\
    The proof will depend on the position of the new edge on the path of $F$.
    
    \begin{mylist}

      \item[\underline{\bf Subcase 2.1.1:}] {\sl The edge $p_1p_2$ is new.} \quad \\
      We distinguish two subcases.

      \begin{mylist}

        \item[\underline{\bf Subcase 2.1.1.1:}] {\sl $p_1\in V(W)$.} \quad \\
        Choose the notation such that $p_1=w_1$. 
        Since $\lab x,w_1,p_2,w_3\rabgb\niso\claw$, we have $p_2w_3\in E(\bG)$, 
        and from $\lab x,w_2,w_4,p_2\rabgb\niso\claw$, up to a symmetry,
        $p_2w_4\in E(\bG)$. By Lemma~\ref{lemma-W5-new-in-path}, 
        $w_2p_2\notin E(\bG)$. Since 
        $\lab w_1,w_2,x,p_2\car p_3\car p_4,t_3,t_4\rabgb\niso\Gt$, 
        we have $w_2z\in E(\bG)$ for some $z\in\{p_3,p_4,t_3,t_4\}$. Then
        $w_2t_i\notin E(\bG)$ for otherwise 
        $\lab w_2,t_i,x,z\rabgb\iso\claw$, $i=1,2$, and from 
        $\lab w_1,t_i,w_2,w_4\rabgb\niso\claw$ we have $t_iw_4\in E(\bG)$,
        $i=3,4$. Since $\lab t_1,t_2,w_4,p_2,p_3,p_4,t_3,t_4\rabgb\niso\Gt$, 
        we have $w_4z'\in E(\bG)$ for some $z'\in \{p_3,p_4,t_3,t_4\}$, 
        but in each of these cases, $\lab w_4,t_1,x,z'\rabgb\iso\claw$,
        a contradiction.
        
        \item[\underline{\bf Subcase 2.1.1.2:}] {\sl $p_2\in V(W)$.} \quad \\
        Set $p_2=w_1$. Then similarly $\lab x,p_1,w_1,w_3\rabgb\niso\claw$ 
        implies $p_1w_3\in E(\bG)$, $\lab x,p_1,w_2,w_4\rabgb\niso\claw$, 
        implies, up to a symmetry, $p_1w_4\in E(\bG)$, and 
        Lemma~\ref{lemma-W5-new-in-path} implies $w_2p_1\notin E(\bG)$. 
        From $\lab w_1,w_2,w_4,p_3\rabgb\niso\claw$ we now have 
        $w_4p_3\in E(\bG)$ or $w_2p_3\in E(\bG)$. However, if 
        $w_4p_3\in E(\bG)$, then immediately 
        $\lab t_1,t_2,p_1,w_4,p_3\car p_4,t_3,t_4\rabgb\iso\Gt$ since each 
        of the edges $w_4z$, $z\in \{t_1,t_2,p_4,t_3,t_4\}$, yields an
        induced $\claw$ with center at $w_4$.
        Thus, $w_4p_3\notin E(\bG)$ and  $w_2p_3\in E(\bG)$.
        
        Now $w_2t_i\notin E(\bG)$, for otherwise 
        $\lab w_2,t_i,x,p_3\rabgb\iso\claw$, $i=1,2$. Considering 
        $\lab p_1,w_4,x,w_2,p_3,p_4,t_3,t_4\rabgb\niso\Gt$, we have 
        $w_4z\in E(\bG)$ for some $z\in\{p_4,t_3,t_4\}$, or $w_2z'\in E(\bG)$
        for some $z'\in\{p_4,t_3,t_4\}$. However, in the first case 
        $\lab w_4,z,w_1,p_1\rabgb\iso\claw$, and in the second case 
        $\lab w_2,x,p_3,z'\rabgb\iso\claw$ for $z'\in\{t_3,t_4\}$. 
        Thus, $z'=p_4$, i.e., $w_2p_4\in E(\bG)$. But then 
        $\lab t_1,t_2,p_1,x,w_2,p_4,t_3,t_4\rabgb\iso\Gt$, a contradiction.
      \end{mylist}

      \item[\underline{\bf Subcase 2.1.2:}] {\sl The edge $p_2p_3$ is new.} \quad \\
      By symmetry, we can set $w_1=p_2$. As before, from 
      $\lab x,w_1,p_3,w_3\rabgb\niso\claw$ we have $w_1w_3\in E(\bG)$, from
      $\lab x,w_2,w_4,p_3\rabgb\niso\claw$, up to a symmetry, 
      $p_3w_4\in E(\bG)$, and, by Lemma~\ref{lemma-W5-new-in-path},
      $w_2p_3\notin E(\bG)$.

      %
      %
      \begin{claim}
      \label{claim-W4-4}
      If $y\in V(\bG)$ is such that $\{x,w_1,p_3\}\subset N_{\bG}(y)$, 
      then $N_{\bG}(y)\cap\{t_1,t_2,p_1,p_4\car t_3,t_4\}=\emptyset$.
      \end{claim}
     
      \bsm
      
      \begin{proofcl}
      If $yz\in E(\bG)$ for some $z\in \{t_1,t_2,t_3,t_4\}$, then 
      $\lab y,z,w_1,p_3\rabgb\iso\claw$. If both $yp_1\in E(\bG)$ and 
      $yp_4\in E(\bG)$, then $\lab y,p_1,p_4,x\rabgb\iso\claw$, and if 
      $y$ is adjacent to one of $p_1,p_4$, say, $yp_4\in E(\bG)$, then 
      $\lab t_1,t_2,p_1,w_1,y,p_4,t_3,t_4\rabgb\iso\Gt$.
      \end{proofcl}
      
      \bsm
      
      Now, since $w_4p_1\notin E(\bG)$ by Claim~\ref{claim-W4-4}, from
      $\lab w_1,p_1,w_2,w_4\rabgb\niso\claw$ we have $p_1w_2\in E(\bG)$.
      Since $\lab p_1,w_2,w_1,w_4,p_3,p_4,t_3,t_4\rabgb\niso\Gt$, 
      by Claim~\ref{claim-W4-4}, $w_2z\in E(\bG)$ for some 
      $z\in\{p_4,t_3,t_4\}$, but in each of these cases, 
      $\lab w_2,p_1,x,z\rabgb\iso\claw$, a contradiction.
    \end{mylist}

  \item[\underline{\bf Subcase 2.2:}] {\sl $V(W)\cap V(F) = \emptyset $.} \quad \\
  In this case, the vertex $x$ must be adjacent in $G$ to the two vertices of the
  new edge (and to no other vertex of $F$ since $F$ is induced in $\Gstx$).
  Up to a symmetry, there are two possible subcases, namely, that the edge
  $p_2p_3$ is new, see Fig~\ref{fig-W4-two-subcases}$(a)$, and that the edge
  $p_1p_2$ is new, see Fig~\ref{fig-W4-two-subcases}$(b)$.

  %
  %
  \begin{figure}[ht]
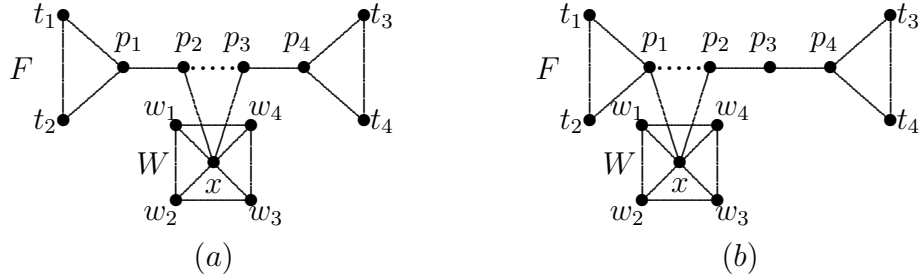

  $$\bp
  \setcoordinatesystem units <1mm,1mm>
  \setplotarea x from -50 to 50, y from -7 to 7
  \put{\beginpicture
  \setcoordinatesystem units <.8mm,.7mm>
  \setplotarea x from -12 to 12, y from -12 to 12
  \put{$\bullet$} at   -15    0
  \put{$p_1$} at    -14   5
  \put{$\bullet$} at    -5    0
  \put{$p_2$} at     -4   5
  \put{$\bullet$} at     5    0
  \put{$p_3$} at      4   5
  \put{$\bullet$} at    15    0
  \put{$p_4$} at     14   5
  \put{$\bullet$} at    -25   10
  \put{$t_1$} at     -28  10
  \put{$\bullet$} at    -25  -10
  \put{$t_2$} at     -28 -10
  \put{$\bullet$} at    25    10
  \put{$t_3$} at      28  10
  \put{$\bullet$} at    25  -10
  \put{$t_4$} at      28 -10
  \plot -15 0  -25 -10  -25 10  -15 0 /
  \plot  15 0   25 -10   25 10   15 0 /
  \setplotsymbol ({\Large .})
  \setdots <3.7pt>
  \plot -5 0  5 0 /
  \setsolid
  \setplotsymbol ({\fiverm .})
  \plot -5 0 -15 0 /
  \plot  5 0  15 0 /
  \put{$F$} at      -32  0
  \put{\beginpicture
  \setcoordinatesystem units <.5mm,.5mm>
  \setplotarea x from -12 to 12, y from -12 to 12
  \put{$\bullet$} at    0   0
  \put{$x$} at    0   -6
  \put{$\bullet$} at   -10   10
  \put{$w_1$} at    -14  14
  \put{$\bullet$} at   -10  -10
  \put{$w_2$} at    -14 -14
  \put{$\bullet$} at    10  -10
  \put{$w_3$} at     14 -14
  \put{$\bullet$} at    10   10
  \put{$w_4$} at     14  14
  \plot -10 -10  -10 10  10 10  10 -10  -10 -10 /
  \plot 10 10  -10 -10  /
  \plot 10 -10  -10 10 /
  \put{$W$} at    -16  0
  \endpicture} at  -0.6  -18
  \plot -5 0    0 -18    5 0 /
  \put{$(a)$} at  0 -36
  \endpicture} at  -35  0
  \put{\beginpicture
  \setcoordinatesystem units <.8mm,.7mm>
  \setplotarea x from -12 to 12, y from -12 to 12
  \put{$\bullet$} at   -15    0
  \put{$p_1$} at    -14   5
  \put{$\bullet$} at    -5    0
  \put{$p_2$} at     -4   5
  \put{$\bullet$} at     5    0
  \put{$p_3$} at      4   5
  \put{$\bullet$} at    15    0
  \put{$p_4$} at     14   5
  \put{$\bullet$} at    -25   10
  \put{$t_1$} at     -28  10
  \put{$\bullet$} at    -25  -10
  \put{$t_2$} at     -28 -10
  \put{$\bullet$} at    25    10
  \put{$t_3$} at      28  10
  \put{$\bullet$} at    25  -10
  \put{$t_4$} at      28 -10
  \plot -15 0  -25 -10  -25 10  -15 0 /
  \plot  15 0   25 -10   25 10   15 0 /
  \setplotsymbol ({\Large .})
  \setdots <3.7pt>
  \plot -15 0  -5 0 /
  \setsolid
  \setplotsymbol ({\fiverm .})
  \plot -5 0  15 0 /
  \put{$F$} at      -32  0
  \put{\beginpicture
  \setcoordinatesystem units <.5mm,.5mm>
  \setplotarea x from -12 to 12, y from -12 to 12
  \put{$\bullet$} at    0   0
  \put{$x$} at    0   -6
  \put{$\bullet$} at   -10   10
  \put{$w_1$} at    -14  14
  \put{$\bullet$} at   -10  -10
  \put{$w_2$} at    -14 -14
  \put{$\bullet$} at    10  -10
  \put{$w_3$} at     14 -14
  \put{$\bullet$} at    10   10
  \put{$w_4$} at     14  14
  \plot -10 -10  -10 10  10 10  10 -10  -10 -10 /
  \plot 10 10  -10 -10  /
  \plot 10 -10  -10 10 /
  \put{$W$} at    -16  0
  \endpicture} at -10.6  -18
  \plot -15 0   -10 -18    -5 0 /
  \put{$(b)$} at  0 -36
  \endpicture} at  35  0
  \ep$$
  \bsm
  \caption{The two possibilities in Subcase 2.2 (dotted lines indicate new edges).}
  \label{fig-W4-two-subcases}
  \end{figure}

  Since the vertex $x$, the vertices of the new edge, and any of the vertices 
  $w_i$, $i=1,2,3,4$, cannot induce a claw in $G$, there must be some more 
  additional edges in $G$ between $\{w_1,w_2,w_3,w_4\}$ and $F$. 
  Checking by computer all possible sets of additional edges $uv$ such that 
  $u\in \{w_1,w_2,w_3,w_4\}$ and $v\in V(F)$, we conclude that, 
  in the first case (when $p_2p_3$ is new), the resulting graph always contains 
  at least one of the graphs $\claw$, $\Gt$ or $W_5$ as an induced subgraph, 
  a contradiction (see \cite{computing}); while in the second case (when $p_1p_2$ 
  is new), the computing gives one possible graph shown in Fig.~\ref{fig-W4-case-2-2}.

\ssk

  %
  %
  \begin{figure}[ht]
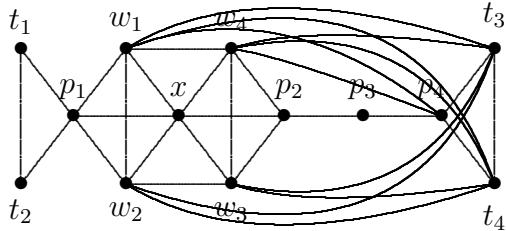

  $$\bp
  \setcoordinatesystem units <1.06mm,0.9mm>
  \setplotarea x from -10 to 10, y from -7 to 7
  \put{\beginpicture
  \setcoordinatesystem units <.7mm,.9mm>
  \setplotarea x from -12 to 12, y from -12 to 12
  \put{$\bullet$} at    -30   10
  \put{$t_1$} at    -30  14
  \put{$\bullet$} at    -30  -10
  \put{$t_2$} at    -30 -14
  \put{$\bullet$} at     0    0
  \put{$x$} at     0  4
  \put{$\bullet$} at   -20    0
  \put{$p_1$} at    -20   4
  \put{$\bullet$} at   -10   10
  \put{$w_1$} at    -10  14
  \put{$\bullet$} at   -10  -10
  \put{$w_2$} at    -10 -14
  \put{$\bullet$} at    10  -10
  \put{$w_3$} at     10 -14
  \put{$\bullet$} at    10   10
  \put{$w_4$} at     10  14
  \put{$\bullet$} at    20    0
  \put{$p_2$} at     21   4
  \put{$\bullet$} at    35    0
  \put{$p_3$} at     35   4
  \put{$\bullet$} at    50    0
  \put{$p_4$} at     48   4
  \put{$\bullet$} at    60   10
  \put{$t_3$} at     60  15
  \put{$\bullet$} at    60  -10
  \put{$t_4$} at     60 -15
  \plot  -20 0  -10 10  10 10  20 0  10 -10  -10 -10  -20 0 /
  \plot  -10 -10  -10 10  10 -10  10 10  -10 -10 /
  \plot  -20 0  20 0 /
  \plot 20 0   50 0  60 10  60 -10  50 0 /
  \plot -20 0  -30 -10  -30 10  -20 0 /
  \setquadratic
  \plotsymbolspacing=0.1pt
  \plot -10  10   20  12    50   0 /  
  \plot -10  10   35  12.5  60 -10 /  
  \plot -10  10   20  16    60  10 /  
  \plot  10  10   25   7    50   0 /  
  \plot  10  10   41   8    60 -10 /  
  \plot  10  10   30  12    60  10 /  
  \plot -10 -10   35 -12.5  60  10 /  
  \plot -10 -10   20 -16    60 -10 /  
  \plot  10 -10   41  -8    60  10 /  
  \plot  10 -10   30 -12    60 -10 /  
  \setlinear
  %
  %
  \endpicture} at   0  0
  \ep$$
  \bsm
  \caption{The only possibility in the second case of Subcase 2.2}
  \label{fig-W4-case-2-2}
  \end{figure}

  By the connectivity assumption, we have $d_G(p_3)\geq 3$, hence the vertex $p_3$ 
  has another neighbor $v\in V(G)\sm(V(F)\cup V(W))$. Since 
  $\lab p_3,p_2,p_4,v\rabg\niso\claw$, there must be some more edges.   
  Checking by computer all possibilities (see \cite{computing}),
  we again conclude that the resulting graph always contains at least one of 
  the graphs $\claw$, $\Gt$ or $W_5$ as an induced subgraph, a contradiction.
  \end{mylist}
  \vspace*{-12mm}
\end{mylist}~
\end{proof}

\bs

We now know that $\bG$ is $\{\claw,W_4,W_5\}$-free and we will repeatedly 
use this property.
For the sake of brevity, we introduce the following notion.
Given a graph $G$ and its vertex $x$, we say that $N_G(x)$ contains an 
{\em endgame} if $N_G(x)$ contains vertices $x_1,\ldots,x_k$ satisfying 
at least one of the following conditions (see also Fig.~\ref{fig-koncovky}):
\begin{mathitem}
\item $k=3$ and $\{x_1x_2,x_2x_3,x_3x_1\}\cap E(G)=\emptyset$,
\item $k=4$, $\{x_1x_2,x_2x_3,x_3x_4,x_4x_1\}\subset E(G)$ and 
      $\{x_1x_3,x_2x_4\}\cap E(G)=\emptyset$,
\item $k=5$ and $\{x_1x_2,x_2x_3,x_3x_4,x_4x_5,x_5x_1\}\cap E(G)=\emptyset$,

\item $k=5$, $\{x_1x_2,x_2x_3,x_3x_4,x_4x_5\}\cap E(G)=\emptyset$ and 
      $\{x_1x_4,x_2x_5\}\subset E(G)$,
\item $k=5$, $\{x_2x_3,x_3x_4,x_1x_5\}\cap E(G)=\emptyset$ and 
      $\{x_2x_5,x_5x_3,x_3x_1,x_1x_4\}\subset E(G)$.
\end{mathitem}

\newcommand{\x}{0.09mm}
\newcommand{\y}{0.09mm}

%
%
\begin{figure}[ht]
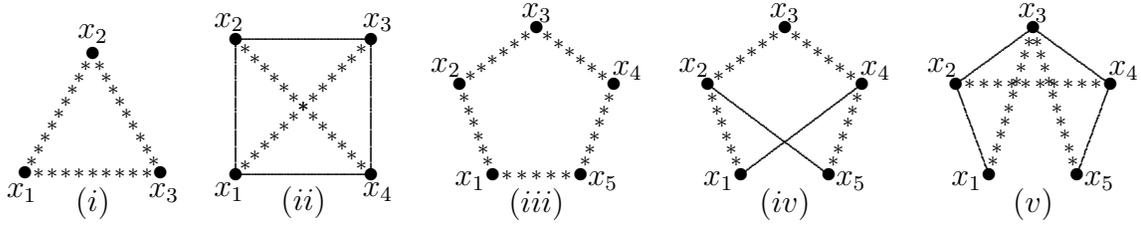

$$\bp
\setcoordinatesystem units <1mm,1mm>
\setplotarea x from -50 to 50, y from -7 to 7
\put{\beginpicture
\setcoordinatesystem units <\x,\y>
\setplotarea x from -120 to 120, y from -120 to 120
\put{$\bullet$} at    100  -97
\put{$\bullet$} at   -100  -97
\put{$\bullet$} at     0   80
\put{$x_1$} at    -105  -125
\put{$x_2$} at       0  110
\put{$x_3$} at     105  -125
\setplotsymbol ({\scriptsize $*$})
\plotsymbolspacing=5pt
\plot -100 -97  0 80  100 -97  -100 -97 /
\plotsymbolspacing=0.4pt
\setplotsymbol ({\fiverm .})
\put{$(i)$} at  0 -140
\endpicture} at -65  -1.3
\put{\beginpicture
\setcoordinatesystem units <\x,\y>
\setplotarea x from -120 to 120, y from -120 to 120
\put{$\bullet$} at   100   100
\put{$\bullet$} at   100  -100
\put{$\bullet$} at  -100  -100
\put{$\bullet$} at  -100   100
\put{$x_1$} at   -110 -125
\put{$x_2$} at   -110  125
\put{$x_3$} at    110  125
\put{$x_4$} at    110 -125
\plot  100 100   100 -100  -100 -100  -100 100  100 100 /
\setplotsymbol ({\scriptsize $*$})
\plotsymbolspacing=5pt
\plot  100 100   -100 -100  /
\plot  100 -100   -100 100  /
\plotsymbolspacing=0.4pt
\setplotsymbol ({\fiverm .})
\put{$(ii)$} at   0 -140
\endpicture} at -37  -0.5
\put{\beginpicture
\setcoordinatesystem units <\x,\y>
\setplotarea x from -120 to 120, y from -120 to 120
\put{$\bullet$} at     0 120
\put{$\bullet$} at   114  37
\put{$\bullet$} at    65 -97
\put{$\bullet$} at   -65 -97
\put{$\bullet$} at  -114 37
\put{$x_1$} at   -98 -105
\put{$x_2$} at  -134   62
\put{$x_3$} at     0  143
\put{$x_4$} at   134   57
\put{$x_5$} at    98 -105
\setplotsymbol ({\scriptsize $*$})
\plotsymbolspacing=5pt
\plot  0 120   114 37  65 -97  -65 -97  -114 37   0 120 /
\plotsymbolspacing=0.4pt
\setplotsymbol ({\fiverm .})
\put{$(iii)$} at  0 -140
\endpicture} at -6  0
\put{\beginpicture
\setcoordinatesystem units <\x,\y>
\setplotarea x from -120 to 120, y from -120 to 120
\put{$\bullet$} at     0 120
\put{$\bullet$} at   114  37
\put{$\bullet$} at    65 -97
\put{$\bullet$} at   -65 -97
\put{$\bullet$} at  -114 37
\put{$x_1$} at   -98 -105
\put{$x_2$} at  -134   62
\put{$x_3$} at     0  143
\put{$x_4$} at   134   57
\put{$x_5$} at    98 -105
\setplotsymbol ({\scriptsize $*$})
\plotsymbolspacing=5pt
\plot   -65 -97  -114 37  0 120  114 37  65 -97 /
\plotsymbolspacing=0.4pt
\setplotsymbol ({\fiverm .})
\plot -65 -97   114 37 /
\plot  65 -97  -114 37 /
\put{$(iv)$} at  0 -140
\endpicture} at  27  0
\put{\beginpicture
\setcoordinatesystem units <\x,\y>
\setplotarea x from -120 to 120, y from -120 to 120
\put{$\bullet$} at     0 120
\put{$\bullet$} at   114  37
\put{$\bullet$} at    65 -97
\put{$\bullet$} at   -65 -97
\put{$\bullet$} at  -114 37
\put{$x_1$} at   -98 -105
\put{$x_2$} at  -134   62
\put{$x_3$} at     0  143
\put{$x_4$} at   134   57
\put{$x_5$} at    98 -105
\setplotsymbol ({\scriptsize $*$})
\plotsymbolspacing=5pt
\plot  -114 37  114 37  /
\plot   -65 -97  0 120  65 -97 /
\plotsymbolspacing=0.4pt
\setplotsymbol ({\fiverm .})
\plot -65 -97  -114 37  0 120  114 37  65 -97 /
\put{$(v)$} at   0 -140
\endpicture} at  60  0
\ep$$
\bsm
\caption{The endgames used in Proposition~\ref{prop-koncovky}. The star-lines 
indicate pairs of nonadjacent vertices (i.e., edges in the complement of $G$).}
\label{fig-koncovky}
\end{figure}
%

%
%
\begin{proposition}
\label{prop-koncovky}
Let $G$ be a graph and let $x$ be its vertex such that $N_G(x)$ contains
an endgame. Then $G$ contains an induced $\claw$, $W_4$ or $W_5$ with center 
at $x$.
\end{proposition}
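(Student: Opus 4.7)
The plan is to go through the five endgame configurations (i)--(v) separately, exploiting in each case that a triangle of non-edges inside $N_G(x)$ yields a claw with center $x$, while an induced $C_4$ or $C_5$ inside $N_G(x)$ yields $W_4$ or $W_5$. Cases (i) and (ii) are immediate: in (i) the vertices $\{x, x_1, x_2, x_3\}$ induce a claw directly, and in (ii) the 4-cycle $x_1x_2x_3x_4$ is induced (the diagonals are non-edges), so $\{x, x_1, x_2, x_3, x_4\}$ induces $W_4$.

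For (iii), I will argue that for each of the five triples $\{x_i, x_{i+1}, x_{i+2}\}$ (indices mod $5$), the two ``consecutive'' pairs are non-edges, so if the third pair $x_ix_{i+2}$ were also a non-edge, then $\{x, x_i, x_{i+1}, x_{i+2}\}$ would induce a claw. Hence all five ``diagonals'' $x_ix_{i+2}$ are edges, and combined with the five prescribed non-edges this forces $\la \{x_1,\ldots,x_5\}\ragb$ to be exactly the induced 5-cycle $x_1x_3x_5x_2x_4x_1$, yielding $W_5$ with center $x$. Case (iv) will be handled analogously: the three triples $\{x_1,x_2,x_3\},\{x_2,x_3,x_4\},\{x_3,x_4,x_5\}$ force $x_1x_3, x_2x_4, x_3x_5 \in E(G)$, and then a split on whether $x_1x_5$ is an edge gives either (if $x_1x_5 \notin E(G)$) the induced $C_5$ on $\{x_1,\ldots,x_5\}$, and hence $W_5$, or (if $x_1x_5 \in E(G)$) the induced $4$-cycle $x_1x_5x_2x_4x_1$ with diagonals $x_1x_2, x_4x_5$ missing, yielding $W_4$.

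Case (v) is the main obstacle and requires a little more bookkeeping. First I will observe that the triple $\{x_2, x_3, x_4\}$ together with $x$ forces $x_2x_4 \in E(G)$ to avoid a claw. Then I will split on the status of the two remaining unknown pairs $x_1x_2$ and $x_4x_5$: in each of the four resulting subcases I will exhibit either the induced 4-cycle $x_1x_2x_5x_3x_1$ (or its symmetric counterpart $x_1x_3x_5x_4x_1$), giving $W_4$, or, when both unknown pairs are non-edges, the induced 5-cycle $x_1x_3x_5x_2x_4x_1$, giving $W_5$. In every case the chosen cycle together with $x$ has $x$ as its center, completing the proof.
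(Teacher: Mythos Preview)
Your proof is correct and follows essentially the same case-by-case strategy as the paper: cases $(i)$--$(iii)$ are argued identically, and your direct analyses of $(iv)$ and $(v)$ are sound. The only organizational difference is that the paper handles $(iv)$ and $(v)$ by reducing to earlier endgames (e.g.\ in $(iv)$ it observes that $x_1x_5\notin E(G)$ yields configuration $(iii)$, and $x_2x_4\in E(G)$ yields $(ii)$ on $x_1,x_4,x_2,x_5$), whereas you force the needed edges and exhibit the induced $C_4$ or $C_5$ explicitly; both routes are equally short.
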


\begin{proof}
$(i)$ $\lab x,x_1,x_2,x_3\rabg\iso\claw$.

\ms

$(ii)$ $\lab x,x_1,x_2,x_3,x_4\rabg\iso W_4$.

\ms

$(iii)$  If, say, $x_1x_3\notin E(G)$, then the vertices 
$x_1,x_2,x_3$ satisfy $(i)$. 
Thus, by symmetry, $\{x_1x_3,x_3x_5,x_5x_2,x_2x_4,x_4x_1\}\subset E(G)$, 
and then $\lab x,x_1,x_3,x_5,x_2,x_4\rabg\iso W_5$.

\ms

$(iv)$ If $x_1x_5\notin E(G)$, then $x_1,\ldots,x_5$ satisfy $(iii)$, 
hence $x_1x_5\in E(G)$. Now, if $x_2x_4\notin E(G)$, then $x_2,x_3,x_4$
satisfy $(i)$, and if $x_2x_4\in E(G)$, then $x_1,x_4,x_2,x_5$ 
satisfy $(ii)$.

\ms

$(v)$ If $x_2x_5\notin E(G)$, then $x_1,x_3,x_5,x_2,x_4$ satisfy $(iv)$, 
and if $x_2x_5\in E(G)$, then $x_2,x_3,x_4,x_5$ satisfy $(ii)$.
\end{proof}

%
%
\begin{proposition}
\label{prop-closure-P6}
Let $G$ be a 3-connected $\{\claw,\Gt\}$-free graph and let $\bG$ be its 
$\Gt$-closure. Then $\bG$ is $\{P_6^2,P_6^{2+}\}$-free.
\end{proposition}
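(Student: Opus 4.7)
Suppose for contradiction that $\bG$ contains an induced $P\iso P_6^2$ or $P\iso P_6^{2+}$, with vertices $v_0,\ldots,v_5$ labeled as in Fig.~\ref{fig-kola_a_cesty}. The plan is to focus on the vertex $v_2$ (the case of $v_3$ being symmetric), identify a (weakly) feasible set $M\ni v_2$, and derive a contradiction from the induced $\Gt$ that must appear in $\bGst_M$ by the definition of the $\Gt$-closure.

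In both $P_6^2$ and $P_6^{2+}$, the set $\{v_0,v_1,v_3,v_4\}\subset\NbG(v_2)$ induces the path $v_0v_1v_3v_4$, and since $v_0v_4\notin E(\bG)$, $\la\NbG(v_2)\ragb$ is connected and noncomplete, so $v_2$ is eligible. The pairs $\{v_0,v_3\}$ and $\{v_1,v_4\}$ are two disjoint independent pairs in $\NbG(v_2)$, so by Lemma~\ref{lemmaA-2_indep_sets} it suffices to exhibit a $2$-connected induced subgraph of $\la\NbG(v_2)\ragb$ in order to obtain local $2$-connectedness of $v_2$, hence feasibility via Theorem~\ref{thmA-HC-2-conn_neighb}. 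Since $\bG$ is $\{\claw,W_4,W_5\}$-free by Propositions~\ref{prop-closure-W5} and~\ref{prop-closure-W4}, Proposition~\ref{prop-koncovky} forbids endgames at $v_2$; moreover Theorem~\ref{thmA-Fouquet} (applicable since $\alpha(\bG)\geq 3$ by Theorem~\ref{thmA-Chvatal-Erdos}) gives a covering $\NbG(v_2)=A\cup B$ by two cliques with $\{v_0,v_1\}\subset A$ and $\{v_3,v_4\}\subset B$. The $3$-connectedness of $\bG$ together with the absence of endgames should force an additional $A$-$B$ edge beyond $v_1v_3$, and hence $2$-connectedness; if the direct argument fails in some degenerate case, one falls back on constructing a weakly feasible set $M\supset\{v_2\}$ via the neighborhood-by-neighborhood pattern of Claim~\ref{claim-W4-3}.

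Once feasibility is established, by the definition of the $\Gt$-closure, $\bGst_M$ contains an induced $F\iso\Gt$. Since $\bG$ itself is $\Gt$-free and the vertices of $M$ are simplicial in $\bGst_M$, all new edges of $F$ lie within the clique $\NbG[M]$, either entirely within one triangle of $F$ or as a single new edge on the path of $F$. In the former case Lemma~\ref{lemma-W5-dist3} forces the endpoints of any such new edge to be at distance $3$ in $\la\NbG(v_2)\ragb$; in the latter case Lemma~\ref{lemma-W5-new-in-path} constrains the common neighbors of the endpoints. I would then case-split on $|V(F)\cap V(P)|$ and on the position of the new edges, using the rigid adjacencies within $P$, the $\{\claw,W_4,W_5\}$-freeness of $\bG$, and the endgame-freeness of Proposition~\ref{prop-koncovky} to eliminate each configuration; the $P_6^{2+}$ case differs only by the extra edge $v_0v_5$, which only tightens the constraints and introduces no genuinely new case.

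The main obstacle will be this final case analysis: once $F$ is placed relative to $P$, tracking how the two ``tails'' of $F$ attach to vertices of $\NbG[v_2]$---which include both vertices of $V(P)$ and additional neighbors of $v_2$ forced by $3$-connectedness---is delicate, since $P$ fixes many adjacencies but leaves others open. By analogy with Case~1 of Proposition~\ref{prop-closure-W4}, I expect this enumeration of attachments and their $\{\claw,\Gt,W_5\}$-free completions to be partly computer-assisted, using a routine analogous to the one described in Section~\ref{sec-concluding} and the repository~\cite{computing}.
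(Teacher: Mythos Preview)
Your plan has a genuine gap at the feasibility step. You center the argument on $v_2$ and hope that $3$-connectedness together with endgame-freeness forces $\la\NbG(v_2)\ragb$ to be $2$-connected, but this is not substantiated: nothing you have written rules out $\NbG(v_2)=A\cup B$ with the single cross-edge $v_1v_3$ (for instance $\NbG(v_2)=\{v_0,v_1,v_3,v_4\}$ inducing the path $v_0v_1v_3v_4$), which is consistent with $\{\claw,W_4,W_5\}$-freeness and with $d_{\bG}(v_2)\geq 3$, yet not $2$-connected. Your fallback (``construct a weakly feasible set via the pattern of Claim~\ref{claim-W4-3}'') is undefined here; that claim depended on the ten explicit graphs $F_1,\ldots,F_{10}$ and does not transfer. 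More fundamentally, since $v_2v_3\in E(\bG)$, you cannot invoke Corollary~\ref{coro-subgraph_S} on the pair $\{v_2,v_3\}$ to salvage feasibility by a dichotomy.

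The paper's proof avoids this by working with $v_1$ and $v_4$ instead, precisely because $v_1v_4\notin E(\bG)$. Then Corollary~\ref{coro-subgraph_S} gives the needed dichotomy: if neither $v_1$ nor $v_4$ is feasible, one of them must be the degree-$4$ vertex of an induced $S$. The first half of the paper's argument is a hand case analysis on how the central triangle of such an $S$ can overlap $Q$, showing in every case either an immediate contradiction (via an endgame) or that $\la\NbG(v_i)\ragb$ is $2$-connected after all. Only then do they pass to $\bGst_{v_1}$ and eliminate the induced $\Gt$ by a second case analysis, again entirely by hand; no computer assistance is used anywhere in this proposition, contrary to your expectation. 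So the key idea you are missing is the choice of the nonadjacent pair $v_1,v_4$ to unlock Corollary~\ref{coro-subgraph_S}; switching to $v_2$ forfeits exactly the tool that makes the feasibility step go through.
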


\begin{proof}
Let, to the contrary, $Q=\lab v_0,\ldots,v_5\rabgb\indsub\bG$ be such that 
$Q\iso P_6^2$ or $Q\iso P_6^{2+}$ (with the labeling of vertices as in 
Fig.~\ref{fig-kola_a_cesty}).
We will prove the statement for both $P_6^2$ and $P_6^{2+}$ at the same time 
since our proof does not depend on whether the edge $v_0v_5$ is in $Q$
or not.
We show that either none of the vertices $v_1,v_4$ is a vertex of degree~4
in an induced subgraph $F\iso S$, or for at least one $v_i\in\{v_1,v_4\}$, 
$\lab\NbG(v_i)\ragb$ is 2-connected. By Corollary~\ref{coro-subgraph_S}
and by Theorem~\ref{thmA-HC-2-conn_neighb}, this will imply that at least 
one $v_i\in\{v_1,v_4\}$ is feasible, and hence $\Gst_{v_i}$ contains an 
induced $\Gt$. Showing that this is not possible, we obtain the requested 
contradiction.

For the vertices of the induced subgraph $F\iso S$, we will use the labeling 
as in Fig.~\ref{fig-graph_S}, and we will use $C$ to denote its central 
triangle $C=z_1z_2z_3$. By symmetry, we suppose that $v_1\in V(C)$, and we 
distinguish several cases according to the mutual position of $C$ and $Q$.

\begin{mylist}

  \item[\underline{\bf Case 1:}] {\sl $\{v_1,v_3\}\subset V(C)$.} \quad \\
  Up to a symmetry, set $v_1=z_1$ and $v_3=z_2$. Then there is a vertex 
  $w_1\in V(F)\sm V(Q)$ such that $w_1v_1,w_1v_3\in E(\bG)$: 
  if $z_3\notin V(F)$, we simply set $w_1=z_3$; otherwise necessarily 
  $z_3=v_2$, and we set $w_1=z_5$. 
  
  \begin{mylist}
    \item[\underline{\bf Subcase 1.1:}] {\sl $w_1v_5\in E(\bG)$.} \quad \\
    If $w_1v_4\notin E(\bG)$, then $w_1v_4v_1v_5v_2$ is endgame $(iv)$ in 
    $\NbG(v_3)$ (note that $v_4v_1,v_1v_5,v_5v_2\notin E(\bG)$ since 
    $Q\indsub\bG$; similarly in the following), hence $w_1v_4\in E(\bG)$. 
    If $w_1v_2\notin E(\bG)$, then $w_1v_4v_2v_1$ is endgame $(ii)$ in 
    $\NbG(v_3)$, hence $w_1v_2\in E(\bG)$.
    This implies that $v_2\notin V(F)$ (since $F\iso S$ is induced).
    Thus, there is a vertex $w_2\in V(F)\sm V(Q)$ such that 
    $w_1v_1,w_2v_3\in E(\bG)$.
    If $w_2v_5\in E(\bG)$, then $w_2v_5w_1v_1$ is endgame $(ii)$ in 
    $\NbG(v_3)$, hence $w_2v_5\notin E(\bG)$.
    Since $\lab v_3,v_2,w_2,v_5\rabgb\niso\claw$, $v_2w_2\in E(\bG)$. 
    Now we have $v_0w_1\notin E(\bG)$ and $v_0w_2\notin E(\bG)$, for otherwise 
    $\la\NbG(v_1)\ragb$ is 2-connected by Lemma~\ref{lemmaA-2_indep_sets}
    and $v_1$ is feasible.
    But then $\lab v_1,w_1,v_0,w_2\rabgb\iso\claw$, a contradiction.
   
    \item[\underline{\bf Subcase 1.2:}] {\sl $w_1v_5\notin E(\bG)$.} \quad \\
    Since $\lab v_3,w_1,v_5,v_2\rabgb\niso\claw$, we have $w_1v_2\in E(\bG)$.
    This implies that $v_2\notin V(F)$, hence there is again a vertex 
    $w_2\in V(F)\sm V(Q)$ such that $w_2v_1,w_2v_3\in E(\bG)$. 
    Note that $w_1w_2\notin E(\bG)$ since $F$ is induced. Then 
    $w_2v_5\in E(\bG)$ since otherwise $\lab v_3,w_1,w_2,v_5\rabgb\iso\claw$.
    If $v_0w_2\in E(\bG)$, then $\la\NbG(v_1)\ragb$ is 2-connected by 
    Lemma~\ref{lemmaA-2_indep_sets}, hence $v_0w_2\notin E(\bG)$, and from 
    $\lab v_1,v_0,w_1,w_2\rabgb\niso\claw$ we have $v_0w_1\in E(\bG)$.
    Now, $v_2w_2\notin E(\bG)$ for otherwise $\la\NbG(v_1)\ragb$ is 2-connected 
    by Lemma~\ref{lemmaA-2_indep_sets}.
    If $w_2v_4\notin E(\bG)$, then $w_2v_4v_1v_5w_1$ is endgame $(iii)$ in 
    $\NbG(v_3)$, hence $w_2v_4\in E(\bG)$. However, then $w_2v_4v_2v_1$ is 
    endgame $(ii)$ in $\NbG(v_3)$, a contradiction.
  \end{mylist}

  \item[\underline{\bf Case 2:}] {\sl $\{v_1,v_2\}\subset V(C)$.} \quad 
  
  \begin{mylist}
    \item[\underline{\bf Subcase 2.1:}] {\sl $v_0\in V(C)$.} \quad \\
    By the structure of $F$, there is a vertex $w_1\in V(F)\sm V(Q)$ such that 
    $w_1v_0,w_1v_2\in E(F)$. Since $w_1v_1\notin E(\bG)$ ($F$ is induced), 
    $v_1v_4\notin E(\bG)$ ($Q$ is induced), and $\lab v_2,w_1,v_1,v_4\rabgb\niso\claw$, 
    we have $w_1v_4\in E(\bG)$. Then $w_1v_1v_4v_0v_3$ is endgame $(iv)$ in $\NbG(v_2)$.
    
    \item[\underline{\bf Subcase 2.2:}] {\sl $v_0\notin V(C)$.} \quad \\
    Note that also $v_3\notin V(C)$ since otherwise we are back in Case~1.
    Then there is a vertex $w_1\in V(C)\sm V(Q)$ such that $w_1v_1,w_1v_2\in E(C)$. 
    
    Suppose that $w_1v_3\notin E(\bG)$. Then necessarily $w_1v_4\in E(\bG)$, for
    otherwise the subgraph $Q'=\lab w_1,v_1,v_2,v_3,v_4,v_5\rabgb$ is isomorphic 
    to $P_6^2$ or to $P_6^{2+}$ and, for $Q'$, we are back in Case~1; however, then 
    $w_1v_1v_3v_4$ is endgame $(ii)$ in $\NbG(v_2)$, a contradiction.
    
    Thus, $w_1v_3\in E(\bG)$, implying that $v_3\notin V(F)$. Hence there is a vertex 
    $w_2\in V(F)\sm V(Q)$ such that $w_2w_1,w_2v_1\in E(F)$. 
    If $w_2v_0\in E(\bG)$, then $\la\NbG(v_1)\ragb$ is 2-connected by 
    Lemma~\ref{lemmaA-2_indep_sets}, hence $w_2v_0\notin E(\bG)$.
    Since $\lab v_1,v_0,w_2,v_3\rabgb\niso\claw$, we have $w_2v_3\in E(\bG)$.
    Then $w_2v_2v_5v_1v_4$ is endgame $(iv)$ in $\NbG(v_3)$, a contradiction.
  \end{mylist}

  \item[\underline{\bf Case 3:}] {\sl $\{v_0,v_1\}\subset V(C)$, $v_2\notin V(C)$.} \\
  Then there is a vertex $w_1\in V(C)\sm V(Q)$ such that $w_1v_0,w_1v_1\in E(C)$.
  
  \begin{mylist}
    \item[\underline{\bf Subcase 3.1:}] {\sl $w_1v_2\notin E(\bG)$.} \quad \\
    If $w_1v_3\in E(\bG)$, then $w_1v_0v_2v_3$ is endgame $(ii)$ in $\NbG(v_1)$, 
    hence $w_1v_3\notin E(\bG)$.
    Then $v_3$ cannot have two neighbors in $V(C)$, implying $v_3\notin V(F)$. 
    Hence there is a vertex $w_2\in V(F)\sm V(Q)$ such that $w_2v_1,w_2w_1\in E(F)$. 
    Now, $w_2v_3\notin E(\bG)$ for otherwise $\la\NbG(v_1)\ragb$ is 2-connected by 
    Lemma~\ref{lemmaA-2_indep_sets}, and then $\lab v_1,w_2,v_0,v_3\rabgb\iso\claw$, 
    a contradiction.

    \item[\underline{\bf Subcase 3.2:}] {\sl $w_1v_2\in E(\bG)$.} \quad \\
    Then $|\NbG(v_2)\cap V(C)|=3$, implying $v_2\notin V(F)$. Hence there is a vertex 
    $w_2\in V(F)\sm V(Q)$ with $w_2v_0,w_2v_1\in E(F)$.
    If $w_2v_3\in E(\bG)$, then $\la\NbG(v_1)\ragb$ is 2-connected by 
    Lemma~\ref{lemmaA-2_indep_sets}, hence $w_2v_3\notin E(\bG)$, and from 
    $\lab v_1,w_1,w_2,v_3\rabgb\niso\claw$ we have $w_1v_3\in E(\bG)$. 
    Now, $v_5\in V(F)$ would imply $v_5w_1,v_5v_0\in E(\bG)$, and then $v_0v_2v_3v_5$
    is endgame $(ii)$ in $\NbG(w_1)$; hence $v_5\notin V(F)$.
    Then there is a vertex $w_3\in V(F)\sm V(Q)$ with $w_3w_1,w_3v_0\in E(F)$
    (clearly $v_2,v_3,v_4\notin V(F)$).
    Now $w_2v_2\notin E(\bG)$ for otherwise $\la\NbG(v_1)\ragb$ is 2-connected by 
    Lemma~\ref{lemmaA-2_indep_sets}, from $\lab v_0,w_3,v_2,w_2\rabgb\niso\claw$ 
    we have $w_3v_2\in E(\bG)$, and from $\lab v_2,w_3,v_1,v_4\rabgb\niso\claw$ 
    we have $w_3v_4\in E(\bG)$.
    
    If $w_3v_3\notin E(\bG)$, then $v_0v_1v_3v_4w_3$ is endgame $(v)$ in $\NbG(v_2)$,
    hence $w_3v_3\in E(\bG)$. This implies that $v_3\notin V(F)$, hence there is a 
    vertex $w_4\in V(F)\sm V(Q)$ such that $w_1v_1,w_4w_1\in E(F)$. Since
    $\lab v_1,v_0,w_4,v_3\rabgb\niso\claw$, $w_4v_3\in E(\bG)$. Since 
    $\lab v_1,w_4,v_2,w_2\rabgb\niso\claw$, $w_4v_2\in E(\bG)$, and since 
    $\lab v_2,v_0,w_4,v_4\rabgb\niso\claw$, $w_4v_4\in E(\bG)$.
    Now, if $w_4v_5\in E(\bG)$ or $w_3v_5\in E(\bG)$, then $\la\NbG(v_4)\ragb$ is 
    2-connected by Lemma~\ref{lemmaA-2_indep_sets} (the independent sets are
    $\{v_2,v_5\}$ and $\{w_3,v_4\}$), hence $w_4v_5\notin E(\bG)$ and 
    $w_3v_5\notin E(\bG)$. However, then we have 
    $\lab v_3,w_4,w_3,v_5\rabgb\iso\claw$, 
    a contradiction.
  \end{mylist}

  \item[\underline{\bf Case 4:}] {\sl $\{v_0,v_3\}\cap V(C)=\emptyset$.} \\
  Then $C=v_1w_1w_2$ for some $w_1,w_2\in V(F)\sm V(Q)$. 
  If $|V(F)\cap\{v_0,v_2,v_3\}|\geq 2$, then $v_0\in V(F)$ and $v_3\in V(F)$
  (since vertices of degree 2 in $F$ are independent), and $\la\NbG(v_1)\ragb$ is
  2-connected by Lemma~\ref{lemmaA-2_indep_sets}. Thus, at most one of the vertices 
  $v_0,v_2,v_3$ is in $V(F)$. This implies that there is a vertex 
  $w_3\in V(F)\sm(V(Q)\cup\{w_1,w_2\})$ such that $w_3v_1,w_3w_2\in E(F)$. Since 
  $\lab v_1,w_3,w_1,v_0\rabgb\niso\claw$, up to a symmetry, $w_3v_0\in E(\bG)$. 
  Then $w_1v_3\notin E(\bG)$, for otherwise $\la\NbG(v_1)\ragb$ is 2-connected
  by Lemma~\ref{lemmaA-2_indep_sets}.
  Since $\lab v_1,w_3,w_1,v_3\rabgb\niso\claw$, we have $w_3v_3\in E(\bG)$, and
  since $\lab v_1,v_0,w_1,v_3\rabgb\niso\claw$, we have $v_0w_1\in E(\bG)$.
  But then $\la\NbG(v_1)\ragb$ is 2-connected by Lemma~\ref{lemmaA-2_indep_sets}.
\end{mylist}

Thus, we know that at least one of the vertices $v_1,v_4$ is feasible. We choose 
the notation such that $v_1$ is feasible. By the definition of the $\Gt$-closure,
$\bGst_{v_1}$ contains an induced $\Gt$. To reach a contradiction, we show that 
this is not possible.

\bs

Set $x=v_1$, let $F=\lab t_1,t_2,p_1,p_2,p_3,p_4,t_3,t_4\rab_{\bGstx}\iso\Gt$, 
and let $y_1y_2\in E(F)$ be a new edge. Since $t_1t_2$ cannot be the only new edge
(then would be $\lab p_1,t_1,t_2,p_2\rabgb\iso\claw$), by symmetry, we can 
assume that $y_1y_2\in\{t_2p_1,p_1p_2,p_2p_3\}$.

\begin{mylist}

  \item[\underline{\bf Case 1:}] {\sl $\dist_{\bG}(y_1,y_2)=2$.} \quad \\
  Then, by Lemma~\ref{lemma-W5-dist3}, $y_1y_2\in\{p_1p_2,p_2p_3\}$.
  Let $y_1z_1y_2$ be a shortest $(y_1,y_2)$-path in $\la\NbG(x)\ragb$.

  %
  %
  \setcounter{prostrclaim}{0}
  \begin{claim}
  \label{claim-P6-1} 
  $\NbG(z_1)\cap V(F)=\{y_1,y_2\}$.
  \end{claim}
  
  \begin{proofcl}
  Let first $y_1y_2=p_1p_2$, and let $z_1z_2\in E(\bG)$ for some 
  $z_2\in V(F)\sm\{p_1,p_2\}$. If $z_2\in\{p_3,t_3,t_4\}$, then 
  $\lab z_1,p_1,p_2,z_2\rabgb\iso\claw$, and if both $z_1p_3\in E(\bG)$ and 
  $z_2t_i\in E(\bG)$ for some $i\in\{1,2\}$, then $\lab z_1,t_i,x,p_3\rabgb\iso\claw$.
  Hence, by symmetry, either $z_1p_3\in E(\bG)$, or, say, $z_1t_2\in E(\bG)$, 
  but in the first case $\lab t_1,t_2,p_1,z_1,p_3,p_4,t_3,t_4\rabgb\iso\Gt$, 
  and in the second case $\lab t_2,p_1,z_1,p_2,p_3,p_4,t_3,t_4\rabgb\iso\Gt$.
  
  Let next $y_1y_2=p_2p_3$, and let $z_1z_2\in E(\bG)$ for some 
  $z_2\in V(F)\sm\{p_2,p_3\}$. If $z_2\in\{t_1,t_2,t_3,t_4\}$, then 
  $\lab z_1,z_2,p_2,p_3\rabgb\iso\claw$, and if both $z_1p_1\in E(\bG)$ and 
  $z_1p_4\in E(\bG)$, then $\lab z_1,p_1,x,p_4\rabgb\iso\claw$. Thus, by symmetry,
  we can assume that $z_1p_1\in E(\bG)$ and   $z_1p_4\notin E(\bG)$, but then 
  $\lab t_1,t_2,p_1,z_1,p_3,p_4,t_3,t_4\rabgb\iso\Gt$, a contradiction.
  \end{proofcl}

\bsm

  %
  %
  \begin{claim}
  \label{claim-P6-2}
  $\NbG[x]=\NbG[z_1]$.
  \end{claim}

\bsm

  \begin{proofcl}
  By Claim~\ref{claim-P6-1}, $\NbG(x)\cap V(F)=\NbG(z_1)\cap V(F)$.

\begin{mylist}
  \item[\underline{Case Cl-\ref{claim-P6-2}-1:}] {\sl $y_1y_2=p_1p_2$.} \\
  We first show that $\NbG[x]\subset\NbG[z_1]$. Let thus, to the contrary, 
  $z_2\in\NbG[x]\sm\NbG[z_1]$.
  Since $\lab x,p_1,p_2,z_2\rabgb\niso\claw$, 
  $p_1z_2\in E(\bG)$ or $p_2z_2\in E(\bG)$. On the other hand, if both 
  $p_1z_2\in E(\bG)$ and $p_2z_2\in E(\bG)$, then we have a contradiction with 
  Claim~\ref{claim-P6-1}. Hence $z_2$ is adjacent to exactly one of $p_1,p_2$.
  Let first $p_1z_2\in E(\bG)$ and $p_2z_2\notin E(\bG)$. 
  By Claim~\ref{claim-P6-1}, $\lab p_1,t_1,z_1,z_2\rabgb\niso\claw$ implies
  $t_1z_2\in E(\bG)$. From $\lab p_1,z_2,x,p_2,p_3,p_4,t_3,t_4\rabgb\niso\Gt$
  we have $z_2z_3\in E(\bG)$ for some $z_3\in\{p_3,p_4,t_3,t_4\}$, but then
  $\lab z_2,t_1,x,z_3\rabgb\iso\claw$, a contradiction.
  Thus, we have $p_2z_2\in E(\bG)$ and $p_1z_2\notin E(\bG)$.
  By Claim~\ref{claim-P6-1}, $\lab p_2,z_1,z_2,p_3\rabgb\niso\claw$ implies
  $p_3z_2\in E(\bG)$. From $\lab p_1,z_1,x,z_2,p_3,p_4,t_3,t_4\rabgb\niso\Gt$
  we have $z_2z_3\in E(\bG)$ for some $z_3\in\{p_4,t_3,t_4\}$. However, if 
  $z_3\in\{t_3,t_4\}$, then $\lab z_2,z_3,p_3,x\rabgb\iso\claw$, hence 
  $z_2p_4\in E(\bG)$, but then $\lab t_1,t_2,p_1,x,z_2,p_4,t_3,t_4\rabgb\iso\Gt$, 
  a contradiction.
  Thus, $\NbG[x]\subset\NbG[z_1]$.

  Next we show that also $\NbG[z_1]\subset\NbG[x]$. Let, to the contrary, 
  $z_2\in\NbG[z_1]\sm\NbG[x]$.
  Let first $p_1z_2\in E(\bG)$. From $\lab p_1,t_1,x,z_2\rabgb\niso\claw$ then 
  $t_1z_2\in E(\bG)$. 
  Since $\lab p_1,z_2,z_1,p_2,p_3\car p_4,t_3,t_4\rabgb\niso\Gt$, we have 
  $z_2z_3\in E(\bG)$ for some $z_3\in\{p_2,p_3,p_4,t_3,t_4\}$; however, 
  $z_3\in\{p_3,p_4,t_3,t_4\}$ implies $\lab z_2,z_3,t_1,z_1\rabgb\iso\claw$, 
  and $z_3=p_2$ implies $\lab p_2,p_3,z_2,x\rabgb\iso\claw$.
  Thus, $p_1z_2\notin E(\bG)$, and from $\lab z_1,p_1,p_2,z_2\rabgb\niso\claw$
  we have $z_2p_2\in E(\bG)$. Since $\lab p_2,x,z_2,p_3\rabgb\niso\claw$, 
  also $z_2p_3\in E(\bG)$. Now, since 
  $\lab p_1,x,z_1,z_2,p_3,p_4,t_3,t_4\rabgb\niso\Gt$, by Claim~\ref{claim-P6-1},
  we have $z_2z_3\in E(\bG)$ for some $z_3\in\{p_4,t_3,t_4\}$. However, 
  $z_3\in\{t_3,t_4\}$ implies $\lab z_2,z_1,p_3,z_3\rabgb\iso\claw$, 
  and for $z_3=p_4$ we have $\lab t_1,t_2,p_1,z_1,z_2,p_4,t_3,t_4\rabgb\iso\Gt$, 
  a contradiction.

  \item[\underline{Case Cl-\ref{claim-P6-2}-2:}] {\sl $y_1y_2=p_2p_3$.} \\
  We again first show that $\NbG[x]\subset\NbG[z_1]$. Let, to the contrary, 
  $z_2\in\NbG[x]\sm\NbG[z_1]$.
  Since $\lab x,p_1,p_2,z_2\rabgb\niso\claw$, up to a symmetry, $p_2z_2\in E(\bG)$, 
  and since $\lab p_2,p_1,z_1,z_2\rabgb\niso\claw$, by Claim~\ref{claim-P6-1},
  $p_1z_2\in E(\bG)$. Since $\lab p_1,z_2,p_2,z_1,p_3,p_4,t_3,t_4\rabgb\niso\Gt$ 
  and by Claim~\ref{claim-P6-1}, $z_2z_3\in E(\bG)$ for some 
  $z_3\in\{p_3,p_4,t_3,t_4\}$. However, for $z_3\in\{p_4,t_3,t_4\}$,
  $\lab z_2,p_1,x,z_3\rabgb\iso\claw$, hence $z_3=p_3$, but then 
  $\lab t_1,t_2,p_1,z_2,p_3,p_4,t_3,t_4\rabgb\iso\Gt$, a contradiction.
  Thus, $\NbG[x]\subset\NbG[z_1]$.
  
  Finally, we show that also $\NbG[z_1]\subset\NbG[x]$. Let thus again, to the 
  contrary, $z_2\in\NbG[z_1]\sm\NbG[x]$.
  Since $\lab z_1,p_2,p_3,z_2\rabgb\niso\claw$, up to a symmetry, 
  $z_2p_2\in E(\bG)$, and since $\lab p_2,p_1,z_2,x\rabgb\niso\claw$, also 
  $z_2p_1\in E(\bG)$. Since $\lab p_1,z_2,p_2,x,p_3,p_4,t_3,t_4\rabgb\niso\Gt$, 
  we have $z_2z_3\in E(\bG)$ for some $z_3\in\{p_3,p_4,t_3,t_4\}$; however, 
  if $z_3\in\{p_4,t_3,t_4\}$, then $\lab z_2,p_1,z_1,z_3\rabgb\iso\claw$ by
  Claim~\ref{claim-P6-1}, and if $z_3=p_3$, then 
  $\lab p_3,z_2,x,p_4\rabgb\iso\claw$, a contradiction.
  \end{mylist}
  \vspace*{-9mm}
  ~
  \end{proofcl}
 
 \bsm

  By the assumption, set $Q=\lab v_0,v_1,v_2,v_3,v_4,v_5\rabgb$, where 
  $Q\iso P_6^2$ or $Q\iso P_6^{2+}$ and $x=v_1$. Since $\bG$ is 3-connected 
  and not Hamilton-connected, by Theorem~\ref{thmA-Chvatal-Erdos}, 
  $\alpha(\bG)\geq 3$. Thus, by Theorem~\ref{thmA-Fouquet} and by
  Proposition~\ref{prop-closure-W5}, $\NbG(x)$, hence also $\NbG[x]$,
  can be covered by two cliques, say, $K_1$ and $K_2$. 
  Since $y_1,y_2\in\NbG(x)$ and $y_1y_2\notin E(\bG)$ (where $y_1=p_1$ and 
  $y_2=p_2$, or $y_1=p_2$ and $y_2=p_3$, depending on the case), we can choose 
  the notation such that $y_1\in K_1\sm K_2$ and $y_2\in K_2\sm K_1$. 
  Clearly $v_1=x\in K_1\cap K_2$, and, by Claim~\ref{claim-P6-2}, also 
  $z_1\in K_1\cap K_2$.
  On the other hand, since $v_4\in \NbG(v_2)\sm \NbG(v_1)$, by 
  Claim~\ref{claim-P6-2}, $v_2\notin K_1\cap K_2$. 
  Hence either $v_2\in K_1\sm K_2$ and, since $v_0,v_3\in\NbG(v_1)$ but 
  $v_0v_3\notin E(\bG)$, one of $v_0,v_3$ is in $K_2\sm K_1$, or,
  symmetrically, $v_2\in K_2\sm K_1$ and one of $v_0,v_3$ is in $K_1\sm K_2$.
  Thus, we conclude that there are vertices $w_1\in K_1\sm K_2$ and 
  $w_2\in K_2\sm K_1$ such that $w_1,w_2\in V(Q)$ and $w_1w_2\in E(Q)$. 
  By Claim~\ref{claim-P6-2}, $w_1y_2\notin E(\bG)$ and $w_2y_1\notin E(\bG)$.
  
  \begin{mylist}
    \item[\underline{\bf Subcase 1.1:}] {\sl $y_1y_2=p_1p_2$.} \quad \\
    Since $\lab p_1,w_1,x,p_2,p_3,p_4,t_3,t_4\rabgb\niso\Gt$, 
    $w_1z_2\in E(\bG)$ for some $z_2\in\{p_3,p_4,t_3,t_4\}$. Since
    $\lab w_1,p_1,w_2,z_2\rabgb\niso\claw$, also $w_2z_2\in E(\bG)$. 
    If $z_3=p_3$, then $w_1p_3p_2x$ is endgame $(ii)$ in $\NbG(w_2)$, hence 
    $z_2\in\{p_4,t_3,t_4\}$.
    Recall that $w_1,w_2\in V(Q)$ and either $w_1=v_2$ and $w_2\in\{v_0,v_3\}$, 
    or $w_1\in\{v_0,v_3\}$ and $w_2=v_2$. Then, if $w_1=v_2$ and $w_2=v_0$, 
    $z_2v_1v_4v_0v_3$ is endgame $(iv)$ in $\NbG(v_2)$, and if $w_1=v_2$ and 
    $w_2=v_3$, then $p_2v_2v_5v_1v_4$ is endgame $(iv)$ in $\NbG(v_3)$. 
    The remaining two cases $w_2=v_2$ are symmetric (since our argument did 
    not use the vertices $t_1,t_2,p_3,p_4,t_3,t_4$).
    
    \item[\underline{\bf Subcase 1.2:}] {\sl $y_1y_2=p_2p_3$.} \quad \\
    Since $w_1,w_2\in V(Q)$, by symmetry, we can assume that $w_1=v_2$ and 
    $w_2\in\{v_0,v_3\}$. However, if $w_2=v_3$, then $p_3v_2v_5v_1v_4$ is 
    endgame $(iv)$ in $\NbG(v_3)$. Hence $w_2=v_0$.
    
    Next observe that a vertex $t\in\{t_1,t_2,t_3,t_4\}$ is adjacent to either 
    both the vertices $v_0,v_2$, or to none of them: if, say, $t_1v_2\in E(\bG)$, 
    then $\lab v_2,t_1,p_2,v_0\rabgb\niso\claw$ implies $t_1v_0\in E(\bG)$, and,
    conversely, if $t_1v_0\in E(\bG)$, then $\lab v_0,t_1,p_3,v_2\rabgb\niso\claw$ 
    implies $t_1v_2\in E(\bG)$. However, if $tv_0,tv_2\in E(\bG)$ for some 
    $t\in\{t_1,t_2,t_3,t_4\}$, then $tv_1v_4v_0v_3$ is endgame $(iv)$ in
    $\NbG(v_2)$. Thus, there are no edges between $\{v_0,v_2\}$ and 
    $\{t_1,t_2,t_3,t_4\}$.
    
    Now, if $v_0p_1\in E(\bG)$, then $\lab p_1,t_1,p_2,v_0\rabgb\iso\claw$, 
    hence $v_0p_1\notin E(\bG)$, and, symmetrically, $v_2p_4\notin E(\bG)$.
    Thus, the only possible edges between $\{v_0,v_2\}$ and 
    $V(F)\sm\{p_2,p_3\}$ are the edges $v_2p_1$ and $v_0p_4$. If both are 
    present, then $\lab t_1,t_2,p_1,v_2,v_0,p_4,t_3,t_4\rabgb\iso\Gt$, 
    and if, say, $v_2p_1\in E(\bG)$ but $v_0p_4\notin E(\bG)$, then 
    $\lab p_1,p_2,v_2,v_0,p_3,p_4,t_3,t_4\rabgb\iso\Gt$. Thus, there are no edges
    between $\{v_0,v_2\}$ and $V(F)\sm\{p_2,p_3\}$.
    
    If $v_3=p_2$, then, by the above observations, $v_4\neq p_1$, and from 
    $\lab v_3,p_1,v_4,v_1\rabgb\niso\claw$ we have $p_1v_4\in E(\bG)$. Since 
    $\lab p_1,v_4,v_3,v_1,p_3,p_4,t_3,t_4\rabgb\niso\Gt$, we have 
    $v_4z\in E(\bG)$ for some $z\in\{p_3,p_4,t_3,t_4\}$, but in each of these 
    cases, $\lab v_4,p_1,v_2,z\rabgb\iso\claw$, a contradiction.
    Thus, $v_3\neq p_2$.
    
    Since $\lab v_2,v_4,p_2,v_0\rabgb\niso\claw$, we have $p_2v_4\in E(\bG)$, 
    and from $\lab p_2,p_1,v_4,v_1\rabgb\niso\claw$ also $p_1v_4\in E(\bG)$.
    Since $\lab p_1,v_4,p_2,v_1,p_3,p_4,t_3,t_4\rabgb\niso\Gt$, we have 
    $v_4z\in E(\bG)$ for some $z\in\{p_3,p_4,t_3,t_4\}$, but in each of these 
    cases, $\lab v_4,p_1,v_2,z\rabgb\iso\claw$, a contradiction.
  \end{mylist}

  \item[\underline{\bf Case 2:}] {\sl $\dist_{\bG}(y_1,y_2)=3$.} \quad \\
  Let $K_1,K_2$ be the two cliques that cover $\NbG[x]$, chosen such that 
  $y_1\in K_1$ and $y_2\in K_2$. By the assumption of the case, 
  $K_1\cap K_2=\{x\}$. Since $x=v_1$, by the structure of $Q$, there are vertices 
  $w_1\in K_1\sm K_2$ and $w_2\in K_2\sm K_1$ such that $w_1,w_2\in V(Q)$, 
  $w_1w_2\in E(Q)$, and either $w_1=v_2$ and $w_2\in\{v_0,v_3\}$, or 
  $w_1\in\{v_0,v_3\}$ and $w_2=v_2$.
  Note that also $y_iw_i\in E(\bG)$, $i=1,2$, and $y_1w_2,y_2w_1\notin E(\bG)$.
  
  Now, if, say, $w_1=v_2$ and $w_2=v_3$, then $y_2v_2v_5v_1v_4$ is endgame $(iv)$
  in $\NbG(v_3)$. Since the case $w_1=v_3$ and $w_2=v_2$ is symmetric, 
  we conclude that either $w_1=v_2$ and $w_2=v_0$, or $w_1=v_0$ and $w_2=v_2$.
  
  \begin{mylist}
    \item[\underline{\bf Subcase 2.1:}] {\sl $y_1y_2=t_2p_1$.} \quad \\
    We show that $w_1,w_2$ have a common neighbor $z\in\{p_3,p_4,t_3,t_4\}$. 
    
    Since $\lab v_1,w_2,p_1,p_2,p_3,p_4,t_3,t_4\rabgb\niso\Gt$, $w_2z\in E(\bG)$
    for some $z\in\{p_2,p_3,p_4,t_3,t_4\}$. If $z\in\{p_3,p_4,t_3,t_4\}$, then
    $\lab w_2,w_1,p_1,z\rabgb\niso\claw$ implies that also $w_1z\in E(\bG)$; thus, 
    let $z=p_2$. Then from $\lab w_1,v_1,w_2,p_2,p_3,p_4,t_3,t_4\rabgb\niso\Gt$ 
    we have $w_1z'\in E(\bG)$ for some $z'\in\{p_3,p_4,t_3,t_4\}$, but then 
    $\lab w_1,t_2,w_2,z'\rabgb\niso\claw$ implies that also $w_2z'\in E(\bG)$. 
    Thus, in all cases, there is $z\in\{p_3,p_4,t_3,t_4\}$ such that 
    $w_1z,w_2z\in E(\bG)$. Since $\{w_1,w_2\}=\{v_0,v_2\}$, necessarily 
    $z\notin\{v_3,v_4\}$, and then $zv_1v_4v_0v_3$ is endgame $(iv)$ in 
    $\NbG(v_2)$.
    
    \item[\underline{\bf Subcase 2.2:}] {\sl $y_1y_2=p_1p_2$.} \quad \\
    We similarly show that $w_1$ and $w_2$ have a common neighbor 
    $z\in\{p_4,t_3,t_4\}$. 
    Since $\lab p_1,w_1,v_1,p_2,p_3,p_4,t_3,t_4\rabgb\niso\Gt$, 
    we have $w_1z\in E(\bG)$ for some $z\in\{p_3,p_4,t_3,t_4\}$.
    
    If $z\in\{p_4,t_3,t_4\}$, then $\lab w_1,p_1,w_2,z\rabgb\niso\claw$ implies
    that also $w_2z\in E(\bG)$; thus, let $z=p_3$. Then 
    $\lab p_3,p_2,p_4,w_1\rabgb\niso\claw$ implies that also $w_1p_4\in E(\bG)$, 
    and we are in the previous case. Thus, we have $w_1z,w_2z\in E(\bG)$ for 
    some $z\in\{p_4,t_3,t_4\}$. Again, $z\notin\{v_3,v_4\}$ since 
    $\{w_1,w_2\}=\{v_0,v_2\}$, and then $zv_1v_4v_0v_3$ is endgame $(iv)$
    in $\NbG(v_2)$.

    \item[\underline{\bf Subcase 2.3:}] {\sl $y_1y_2=p_2p_3$.} \quad \\
    We proceed in a similar way as in Subcase~1.2.
    If, say, $w_1t_i\in E(\bG)$ for some $i\in\{1,2,3,4\}$, then 
    $\lab w_1,w_2,p_2,t_i\rabgb\niso\claw$ implies that also $w_2t_i\in E(\bG)$
    and $t_iv_1v_4v_0v_3$ is endgame $(iv)$ in $\NbG(v_2)$; and if, say, 
    $w_2p_4\in E(\bG)$, then since $\lab p_4,t_4,p_3,w_1\rabgb\niso\claw$, 
    $w_1t_4\in E(\bG)$, and we are in the previous case. Hence $w_1p_1$ and 
    $w_2p_4$ are the only possible edges between $\{w_1,w_2\}$ and 
    $V(F)\sm\{p_2,p_3\}$; however, if both are present, then 
    $\lab t_1,t_2,p_1,w_1,w_2,p_4,t_3,t_4\rabgb\iso\Gt$, and if, say, 
    $w_1p_1\in E(\bG)$ but $w_2p_4\notin E(\bG)$, then 
    $\lab p_1,p_2,w_1,w_2,p_3,p_4,t_3,t_4\rabgb\iso\Gt$.
    Thus, there are no edges between $\{w_1,w_2\}$ and $V(F)\sm\{p_2,p_3\}$.
    By symmetry, set $w_1=v_2$ and $w_2=v_0$.
    
    If $v_3=p_2$, then $v_4\neq p_1$, $\lab p_2,p_1,v_4,v_1\rabgb\niso\claw$
    implies $p_1v_4\in E(\bG)$, and since 
    $\lab p_1,v_4,p_2,v_1,p_3,p_4,t_3,t_4\rabgb\niso\Gt$, we have $v_4z\in E(\bG)$
    with $z\in\{p_3,p_4,t_3,t_4\}$, implying $\lab v_4,p_1,v_2,z\rabgb\iso\claw$.
    Hence $v_3\neq p_2$.
    Then $\lab v_2,v_4,p_2,v_0\rabgb\niso\claw$ implies $p_2v_4\in E(\bG)$, 
    $\lab p_2,p_1,v_4,v_1\rabgb\niso\claw$ implies $p_1v_4\in E(\bG)$, and from
    $\lab p_1,v_4,p_2,v_1,p_3\car p_4,t_3,t_4\rabgb\niso\Gt$ we have $v_4z\in E(\bG)$
    with $z\in\{p_3,p_4,t_3,t_4\}$, implying $\lab v_4,p_1,v_2,z\rabgb\iso\claw$,
    a contradiction.
  \end{mylist}
\end{mylist}
~

\vspace*{-15mm}
\end{proof}

\section{Concluding remarks}
\label{sec-concluding}

{\bf 1.} A $\Gt$-closure of a graph $G$, as defined in 
Section~\ref{sec-Gt-closure}, is not unique in general. However, in view 
of Theorem~\ref{thmA-main}, it is unique in 3-connected $\{\claw,\Gt\}$-free graphs
since each such graph is Hamilton-connected, hence has complete closure.

\bs

\noi
{\bf 2.}  
The source codes of our proof-assisting programs are available at \cite{computing}.
The codes are written in SageMath~9.6 and use some of its functions. 
We thank the SageMath community~\cite{Sage} for developing a valuable open-source 
mathematical software.

\section{Acknowledgement}
The research was supported by project GA20-09525S of the Czech Science 
Foundation.


\begin{thebibliography}{9}

\bibitem{Be91} 
P. Bedrossian: 
Forbidden sugraph and minimum
degree conditions for hamiltonicity. Thesis, Memphis State
University, U.S.A., 1991 

\bibitem{BM73} 
J.C. Bermond, J.C. Meyer: Graphe representatif des
arretes d'un multigraphe. J.~Math. Pures et Appl., 52 (1973), 299-308.

\bibitem{BGHJFW14}
Q. Bian, R.J. Gould, P. Horn, S. Janiszewski, S. Fleur and P. Wrayno: 
3-connected $\{K_{1,3},P_9\}$-free graphs are hamiltonian-connected.
Graphs Combin. 30 (2014), 1099-1122.

\bibitem{BM08}
J.A. Bondy, U.S.R. Murty: Graph Theory.
Springer, 2008.

\bibitem{BFR00}
S. Brandt, O. Favaron, Z. Ryj\'a\v{c}ek:
Closure and stable hamiltonian properties in claw-free graphs.
J. Graph Theory 32 (2000), 30-41.

\bibitem{BFHTV02}
H. Broersma, R.J. Faudree, A. Huck, H. Trommel, H.J. Veldman: 
Forbidden subgraphs that imply Hamiltonian-connectedness.
J. Graph Theory 40 (2002), 104-119.

\bibitem{ChE72}
V. Chv\'atal, P. Erd\H{o}s: A note on hamiltonian circuits. 
Discrete Math. 2 (1972), 111-113.

\bibitem{FG97}
R.J. Faudree, R.J. Gould:
Characterizing forbidden pairs for hamiltonian properties.
Discrete Mathematics 173 (1997), 45-60. 

\bibitem{F93}
J.L Fouquet: A strengthening of Ben Rebea's lemma. 
J. Combin. Theory Ser. B 59 (1993), 35-40.

\bibitem{KRSV???-II}
A. Kabela, Z. Ryj\'a\v{c}ek, M. Skyvov\'a, P. Vr\'ana:
Every $3$-connected $\{\claw,\Gamma_3\}$-free graph is Hamilton-connected.
Manuscript 2023, submitted.

\bibitem{KRTV12}
R. Ku\v{z}el, Z. Ryj\'a\v{c}ek, J. Teska, P. Vr\'ana:
Closure, clique covering and degree conditions for Hamilton-connectedness in 
claw-free graphs.
Discrete Math. 312 (2012), 2177-2189. 

\bibitem{LRVXY23-I}
X. Liu, Z. Ryj\'a\v{c}ek, P. Vr\'ana, L. Xiong, X. Yang:
Hamilton-connected \{claw,net\}-free graphs,~I.
J.~Graph Theory 102 (2023), 154–179.

\bibitem{LRVXY23-II} 
X. Liu, Z. Ryj\'a\v{c}ek, P. Vr\'ana, L. Xiong, X. Yang:
Hamilton-connected \{claw,net\}-free graphs,~II.
J.~Graph Theory 103 (2023), 119-138.

\bibitem{LXL21} 
X. Liu, L. Xiong, H.-J. Lai:
Strongly spanning trailable graphs with small circumference and
Hamilton-connected claw-free graphs.
Graphs Combin. 37 (2021), 65-85.

\bibitem{R97}
Z. Ryj\'a\v{c}ek:
On a closure concept in claw-free graphs.
J. Combin. Theory Ser. B 70 (1997), 217-224.

\bibitem{RV10}
Z. Ryj\'a\v{c}ek, P. Vr\'ana:
On stability of Hamilton-connectedness under the 2-closure in claw-free graphs. 
J.~Graph Theory 66 (2011), 137-151.

\bibitem{RV11}
Z. Ryj\'a\v{c}ek, P. Vr\'ana:
Line graphs of multigraphs and Hamilton-connectedness of claw-free graphs.
J. Graph Theory 66 (2011), 152-173.

\bibitem{RV21}
Z. Ryj\'a\v{c}ek, P. Vr\'ana:
Every 3-connected $\{\claw,Z_7\}$-free graph of order at least 21 is 
Hamilton-connected.
Discrete Math. 344 (2021), 112350. 

\bibitem{RV23}
Z. Ryj\'a\v{c}ek, P. Vr\'ana:
Hamilton-connected \{claw,bull\}-free graphs.
J.~Graph Theory 102 (2023), 128–153.

\bibitem{Sage}
The Sage Developers: SageMath, the Sage Mathematics Software System 
(Version 9.6), 2022, {\tt https://www.sagemath.org}

\bibitem{Z97}
I.E. Zverovich: 
An analogue of the Whitney theorem for edge graphs of multigraphs, and 
edge multigraphs. 
Discrete Math. Appl. 7 (1997), 287-294.

\ms

\hspace*{-6mm}{\large\bf Computing source codes}

\bibitem{computing}
The proof of Proposition~\ref{prop-closure-W4}: 
{\tt https://arxiv.org/src/2406.03036/anc}

\end{thebibliography}
\end{document}